\theoremstyle{plain}
\newtheorem{thm}{Theorem}[section]
  \theoremstyle{plain}
  \newtheorem{cor}[thm]{Corollary}
  \theoremstyle{definition}
  \newtheorem{problem}[thm]{Problem}
  \theoremstyle{definition}
  \newtheorem{defn}[thm]{Definition}
  \theoremstyle{plain}
  \newtheorem{lem}[thm]{Lemma}
  \theoremstyle{plain}
  \newtheorem{prop}[thm]{Proposition}
\DeclareMathOperator{\id}{id}
\DeclareMathOperator{\supp}{supp}
\DeclareMathOperator{\diff}{diff}
\DeclareMathOperator{\ldim}{\underline{\dim}}
\DeclareMathOperator*{\essinf}{essinf}
\newcommand{\sqr}{{}^{{}^{}_\square}}
\begin{document}

\title{Geometric rigidity of $\times m$ invariant measures }

\author{Michael Hochman}

\thanks{Research supported by NSF grant 0901534. }

\keywords{Measure rigidity, invariant measure, interval map, fractal geometry,
geometric measure theory, scenery flow}

\subjclass[2000]{37A10, 37E10, 37A45, 37C45, 28A80}

\maketitle
\markboth{Michael Hochman}{Geometric rigidity of $\times m$ invariant measures}
\begin{abstract}
Let $\mu$ be a probability measure on $[0,1]$ which is invariant
and ergodic for $T_{a}(x)=ax\bmod1$, and $0<\dim\mu<1$. Let $f$
be a local diffeomorphism on some open set. We show that if $E\subseteq\mathbb{R}$
and $(f\mu)|_{E}\sim\mu|_{E}$, then $f'(x)\in\{\pm a^{r}\,:\, r\in\mathbb{Q}\}$
at $\mu$-a.e. point $x\in f^{-1}E$. In particular, if $g$ is a
piecewise-analytic map preserving $\mu$ then there is an open $g$-invariant
set $U$ containing $\supp\mu$ such that $g|_{U}$ is piecewise-linear
with slopes which are rational powers of $a$. 

In a similar vein, for $\mu$ as above, if $b$ is another integer
and $a,b$ are not powers of a common integer, and if $\nu$ is a
$T_{b}$-invariant measure, then $f\mu\perp\nu$ for all local diffeomorphisms
$f$ of class $C^{2}$. This generalizes the Rudolph-Johnson Theorem
and shows that measure rigidity of $T_{a},T_{b}$ is a result not
of the structure of the abelian action, but rather of their smooth
conjugacy classes: if $U,V$ are maps of $\mathbb{R}/\mathbb{Z}$
which are $C^{2}$-conjugate to $T_{a},T_{b}$ then they have no common
measures of positive dimension which are ergodic for both.
\end{abstract}

\section{\label{sec:Introduction}Introduction}

\subsection{\label{sub:Background}Background}

The motivating problem of this paper is to understand, for  {}``structured''
Borel probability measures on $\mathbb{R}$, which transformations
can map one measure to another, in whole or in part, and how the structure
of the measures determines this. More precisely, writing $f\mu$ for
the measure $f\mu(A)=\mu(f^{-1}A)$, we ask what one can say about
maps $f$ for which is $f\mu$ non-singular with respect $\mu$, or
with respect to some other measure $\nu$. The expectation is that
highly structured measures should be preserved by a small number of
maps whose structure reflects that of the measure.

There are a few elementary things one can say. Since any two non-atomic
probability measures on $\mathbb{R}$ can be mapped to each other
by a continuous function, one must impose some regularity assumption
for the question to make sense. In this paper we consider diffeomorphisms
of the line, and denote the set of such maps by $\diff(\mathbb{R})$.
Write $\diff^{k}(\mathbb{R})$ for the set of diffeomorphisms of class
$C^{k}$. Also, there are  some trivial cases of measures $\mu$ such
that $\mu,f\mu$ are non-singular for many $f\in\diff(\mathbb{R})$,
namely if $\mu$ has a Lebesgue component or atoms. One must therefore
consider measures without such components. Recall that the local dimension
of a measure $\mu$ at $x$ is the limit\[
D(\mu,x)=\lim_{r\rightarrow0}\frac{\log\mu(B_{r}(x))}{\log r}\]
where $B_{r}(x)$ is the ball of radius $r$ around $x$. In general
the local dimension may not exist but it exists in the cases which
will interest us. We will work with measures of \emph{intermediate
dimension}, meaning that $0<D(\mu,x)<1$ at $\mu$-a.e. $x$. This
rules out Lebesgue component, and also any zero-dimensional component,
including atoms.

In this work we consider probability measures which are invariant
under the maps $T_{a}:[0,1]\rightarrow[0,1]$ defined for integers
$a\geq2$ by\[
T_{a}x=bx\bmod1\]
i.e. measures such that $T_{a}\mu=\mu$. We sometimes view $T_{a}$
as a map of the torus $\mathbb{R}/\mathbb{Z}$. We note that there
is a very rich supply of $T_{a}$-invariant measures, including some
self-similar measures but also many (most) which are not.

For $T_{a}$-invariant measures most work to date has focused on their
behavior for maps which are related to the group structure of $\mathbb{R}/\mathbb{Z}$,
i.e. when $f$ is one of the maps $T_{b}$ (an endomorphism of $\mathbb{R}/\mathbb{Z}$)
or translation by an element of $\mathbb{R}/\mathbb{Z}$. The principal
result for endomorphisms is the measure rigidity theorem of Rudolph
\cite{Rudolph90} and Johnson \cite{Johnson92}. Write $a\sim b$
if $a,b$ are powers of a common integer, i.e. $a=c^{k}$ and $b=c^{m}$
for some $c,k,m\in\mathbb{N}$, and otherwise write $a\not\sim b$.
The Rudolph-Johnson Theorem states that if $\mu$ is a $T_{a}$-invariant
measure whose ergodic components all have entropy strictly between
$0$ and $\log a$, then it is not preserved by $T_{b}$ for any $b\not\sim a$.
The result can be slightly improved, using a later result of Rudolph-Johnson
\cite{JohnsonRudolph95}, to conclude that $\mu\perp\nu$ for every
$T_{b}$-invariant measure $\nu$ of intermediate dimension. To date
this is essentially the best result towards Furstenberg's $\times2$,$\times3$
conjecture, which predicts that there should be no non-atomic measures
except Lebesgue which are jointly invariant under $T_{a}$ and $T_{b}$
for $a\not\sim b$. For a survey of related algebraic conjectures
and results see \cite{Lindenstrauss2005}. There are also strong rigidity
results for smooth actions of $\mathbb{Z}^{d}$ and $\mathbb{R}^{d}$,
see e.g. Kalinin, Katok and Hertz \cite{KalininKatokHertz2008}. 

For translations we are aware only of the work of Host \cite{Host95}.
Define $\mu$ to be conservative for a subgroup $\Lambda\subseteq\mathbb{R}/\mathbb{Z}$
if for every set $A$ with $\mu(A)>0$ there is some $0\neq r\in\Lambda$
such that $\mu(A\cap(A+r))>0$. For the groups $\Lambda=\mathbb{Z}[\frac{1}{b}]$
of $b$-adic rationals when $\gcd(a,b)=1$, or for the cyclic subgroup
generated by an element $r\in\mathbb{R}/\mathbb{Z}$ such that $\{T_{a}^{n}r\}_{n\in\mathbb{N}}$
is dense in $\mathbb{R}/\mathbb{Z}$, Host showed, using methods of
harmonic analysis, that the only conservative $T_{a}$-invariant measure
is Lebesgue measure. Note that these results do not require any assumption
about the dimension of the meeasure, but also they do not directly
relate to our question, since they do not say anything about preservation
of $\mu$ under a particular rotation. 

In another direction, the question we are interested in has been studied
in the fractal geometry literature for rather general functions $f$,
e.g. $C^{1}$ or bi-Lipschitz, but for restricted classes of measures
such as self-similar measures on attractors of iterated function systems.
In situations like these it has been shown that measures arising from
different parameters are mutually singular, and cannot be easily deformed
into each other, see for example \cite{Elekes09}. Related questions
for certain classes of Cantor sets are known, for instance see \cite{CooperPignataro1988,BedfordFisher97,LiFeng2004}.

\subsection{\label{sub:Statement-of-results}Statement of results}

Write $\mu\sim\nu$ and $\mu\perp\nu$ to indicate that the measures
are equivalent or singular, respectively, and let $\mu|_{E}$ denote
the restricted measure $\mu|_{E}(A)=\mu(A\cap E)$. Thus $\mu,\nu$
are non-singular if and only if $\mu|_{E}\sim\nu|_{E}$ for some set
$E$ with $\nu(E)>0$.
\begin{thm}
\label{thm:self-rigidity}Let $\mu$ be a $T_{a}$-ergodic measure
of intermediate dimension. Then there exists $n\in\mathbb{N}$ such
that if $f\in\diff(\mathbb{R})$ and $(f\mu)|_{E}\sim\mu|_{E}$, then
\[
f'(x)\in\{\pm a^{k/n}\,:\, k\in\mathbb{Z}\}\qquad\mbox{ for }\mu\mbox{-a.e. }x\in f^{-1}E\]
More generally, if $\nu$ is another $T_{a}$-ergodic measure and
$(f\mu)|_{E}\sim\nu|_{E}$ then there exists $t\in\mathbb{R}$ such
that \[
f'(x)\in\{\pm t\cdot a^{k/n}\,:\, k\in\mathbb{Z}\}\qquad\mbox{ for }\mu\mbox{-a.e. }x\in f^{-1}E\]

\end{thm}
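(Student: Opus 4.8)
The plan is to reduce the global statement to an infinitesimal one about the *tangent structure* of $\mu$, using the scenery flow / tangent-measure technology. First I would pass from $f$ to its derivative: since $f$ is a diffeomorphism and $(f\mu)|_E \sim \mu|_E$, at $\mu$-a.e. point $x \in f^{-1}E$ the measure $\mu$ "looks the same" near $x$ and near $f(x)$ up to the linear map $y \mapsto f'(x)\cdot y$. Concretely, I would magnify $\mu$ around $x$ at scale $r \to 0$, i.e. consider the rescaled measures $\mu_{x,r}$ supported on $B_1(0)$, and similarly $\mu_{f(x),r}$; the hypothesis $(f\mu)|_E \sim \mu|_E$ together with differentiability of $f$ forces the set of accumulation points of $\{\mu_{x,r}\}_r$ (the \emph{tangent distribution} or micromeasures at $x$) to be carried, via the linear scaling by $f'(x)$, onto the tangent distribution at $f(x)$. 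So the content becomes: the tangent distribution of $\mu$ is, at a.e. point, invariant under scaling by some fixed factor related to $a$, unless the scaling factor $|f'(x)|$ is of the special form $a^{k/n}$.

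Next I would invoke the structure of the scenery flow of a $T_a$-invariant measure of intermediate dimension. The key fact (which I would either cite or establish via an ergodic-theoretic argument on the symbolic coding) is that for such $\mu$, the fractal distribution obtained as the limit of the scenery flow is generated by an \emph{$a$-adic} symbolic system, and consequently the "time-$\log a$" map of the scenery flow is essentially the identity on the level of the tangent distribution --- i.e. magnifying by a factor of $a$ returns you to the same micromeasure distribution. The crucial rigidity input is then a statement of the following shape: if a fractal distribution arising this way is \emph{also} invariant (or quasi-invariant) under scaling by a factor $t$, then $t$ must be commensurable with $a$ in the multiplicative sense, namely $t = a^{k/n}$ with $n$ depending only on $\mu$ (roughly, $n$ measures the "period" of the scenery and comes from how much one can desynchronize the $a$-adic grid while keeping the measure self-similar under the scenery flow). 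Applying this with $t = |f'(x)|$ yields $|f'(x)| \in \{a^{k/n}\}$ on a positive-measure set, and ergodicity of $\mu$ under $T_a$ (after checking $x \mapsto |f'(x)|$ is, up to the relation, an invariant-type function, or rather by a covering/decomposition argument over the countably many possible exponents $k$) promotes this to a.e.\ $x \in f^{-1}E$. The sign $\pm$ is handled by noting $f'$ has constant sign on each interval of monotonicity and absorbing it.

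For the "more general" clause with a second ergodic measure $\nu$ and $(f\mu)|_E \sim \nu|_E$: the same magnification argument shows the tangent distribution of $\mu$ at $x$ maps, via scaling by $f'(x)$, to that of $\nu$ at $f(x)$. Both $\mu$ and $\nu$ are $T_a$-ergodic of intermediate dimension, so each has a scenery that is essentially periodic under magnification by $a$; the ratio of any two admissible "base scales" for $\mu$ versus $\nu$ is then forced to lie in a single coset $t \cdot \{a^{k/n}\}$ of the group generated by $a^{1/n}$, where $t$ is the (a.e.\ constant, by ergodicity) logarithmic offset between the two sceneries. This gives $f'(x) \in \{\pm t \cdot a^{k/n} : k \in \mathbb{Z}\}$ as claimed, with the \emph{same} $n$ that works for $\mu$ alone (one may enlarge $n$ to a common value for $\mu$ and $\nu$ if needed, but in fact the period is controlled by $a$).

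The main obstacle I anticipate is the rigidity step: proving that a fractal/tangent distribution coming from a $T_a$-invariant measure of intermediate dimension cannot have "extra" scaling symmetries beyond the rational powers of $a$. This is where the hypothesis $0 < \dim\mu < 1$ is essential --- at dimension $1$ one could have Lebesgue-type tangents invariant under all scalings, and at dimension $0$ the tangents degenerate --- and it is the analogue, in this continuous/geometric setting, of the entropy gap hypothesis in the Rudolph--Johnson theorem. I expect to need a careful CP-process / Furstenberg-style ergodic decomposition argument, showing that an invariant measure for the scenery flow with a nontrivial scaling stabilizer must be supported on measures with a prescribed exact local dimension equal to $0$ or $1$ along the offending direction, contradicting intermediate dimension; the bookkeeping to extract the single modulus $n$ uniformly (rather than a point-dependent one) is the delicate part.
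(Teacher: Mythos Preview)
Your overall strategy---pass to the scenery flow, compare tangent structure at $x$ and at $f(x)$, and exploit that the scenery of a $T_a$-invariant measure has an $a$-adic periodicity---is the same as the paper's. But the specific mechanism you propose has a genuine gap.

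You write that the rigidity input is: ``if a fractal distribution arising this way is also invariant under scaling by a factor $t$, then $t$ must be commensurable with $a$.'' This is false as stated. The generated distribution $P$ is an \emph{invariant} measure for the continuous scenery flow $(S_t^\sqr)_{t\in\mathbb{R}}$, so $P$ is invariant under scaling by \emph{every} $t>0$. Lemma~\ref{lem:diffeo-shifts-sceneries} says only that $\mu$ generates $P$ at $x$ iff $f\mu$ generates $P$ at $f(x)$; since (by ergodicity) $\mu$ generates the \emph{same} $P$ at a.e.\ point, comparing the generated distributions at $x$ and $f(x)$ imposes no constraint whatsoever on $f'(x)$. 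The ``tangent distribution'' simply does not see the derivative.

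What the paper does instead is track not the distribution $P$ but the \emph{phase along the orbit}. Concretely: one first proves (Theorem~\ref{thm:existence-of-spectrum}, and this is where intermediate dimension is used, via a convolution/projection argument) that the flow $(\mathcal{M}^\sqr,P,S^\sqr)$ has an eigenvalue $\alpha=\frac{n}{\log a}$ with eigenfunction $\varphi$. One then looks at the \emph{joint} scenery of $\mu\times\mu$ at a pair $(x_0,y)$; this gives an ergodic self-joining $P_{x_0,y}$ of $P$, on which the ratio $\varphi(\sigma)/\varphi(\tau)$ is a.s.\ constant---this constant is the ``relative phase'' $p_\alpha(x_0,y)$. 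The key computation (Proposition~\ref{pro:effect-of-diffeo-on-phase}) is that applying $f$ shifts this phase by $e(-\alpha\log f'(y))$, because the scenery at $f(y)$ is the scenery at $y$ time-shifted by $\log f'(y)$. For ergodic $\mu$ the phase measure $\theta_\alpha(\mu)$ is a single atom (Theorem~\ref{thm:phase-distribution}); since $(f\mu)|_E\sim\mu|_E$ forces $\theta_\alpha(f\mu|_E)$ to be the same atom, one gets $e(\alpha\log f'(y))=1$ a.e., i.e.\ $\log|f'(y)|\in\frac{\log a}{n}\mathbb{Z}$. For a second ergodic $\nu$ the phase measure is again a single atom but possibly a different one, which accounts for the extra factor $t$.

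So the missing idea in your plan is precisely this phase/eigenfunction bookkeeping via the two-point scenery. Your anticipated ``rigidity step'' (ruling out extra scaling symmetries of $P$) is not the right formulation; the correct statement is that $P$ has a nontrivial eigenvalue in $\frac{1}{\log a}\mathbb{Z}$, and the constraint on $f'$ comes from how $f$ acts on the associated eigenfunction, not on $P$ itself.
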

While we have stated the result for diffeomorphisms of $\mathbb{R}$,
the result is of a local nature and immediately applies to partially
defined or piecewise diffeomorphisms. 

Theorem \ref{thm:self-rigidity} is close to optimal. One clearly
cannot hope to get information about $f$ except on the support of
$\mu|_{E}$. On the other hand, a $T_{a}$-invariant measure $\mu$
is also invariant for $T_{a^{n}}$ for every $n\in\mathbb{N}$ and
sometimes also for $n=1/m$ when $a^{1/m}\in\mathbb{N}$. Thus one
cannot expect that some intrinsic property of $\mu$ will encode $a$,
and the best one can hope for is a power of $a$. The ergodicity assumption
is necessary also: for example fix a $T_{2}$-invariant measure $\mu$
of intermediate dimension and form $\nu=\frac{1}{2}\mu+\frac{1}{2}T_{3}\mu$,
which is also $T_{2}$-invariant. It is easy to see that there is
a piecewise linear map $f$ with $f\nu,\nu$ non-singular, and with
slopes $2$ and $3$ on sets of positive $\nu$-measure.

We do not know whether it can happen in the theorem that $a^{1/n}$
is not an integer. We also do not know whether a version of the theorem
is true under Lipschitz (rather than differentiability) conditions
on $f$. 

Under mild additional assumptions, Theorem \ref{thm:self-rigidity}
implies that very few maps can preserve a $T_{a}$-ergodic measure
of intermediate dimension. 
\begin{cor}
\label{cor:analytic-rigidity}Let $\mu$ and $n$ be as in the theorem..
Then every piecewise analytic map of $[0,1]$ which preserves $\mu$
is piecewise linear on an open set $U$ containing the support of
$\mu$ and on $U$ has slopes of the form $\pm a^{k/n}$, $k\in\mathbb{N}$. 
\end{cor}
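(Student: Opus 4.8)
The plan is to deduce the corollary from Theorem~\ref{thm:self-rigidity} by a local application, and then use real-analyticity to promote the resulting almost-everywhere statement to an identity on each analytic piece carrying $\mu$-mass. Fix a piecewise analytic $g\colon[0,1]\to[0,1]$ with $g\mu=\mu$, and a partition of $[0,1]$ into intervals $I_1,\dots,I_m$ such that $g$ extends analytically to a neighbourhood of each $\overline{I_j}$. Since $\dim\mu>0$, $\mu$ is non-atomic, and I would first record: $g$ is not constant on any $I_j$ with $\mu(I_j)>0$ (that would make $g\mu=\mu$ atomic); hence on each such piece $g'$ is a nonzero analytic function and has only finitely many zeros; and the set $N$ consisting of these zeros together with the endpoints of the $I_j$ is finite, hence $\mu$-null. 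Thus for $\mu$-a.e.\ $x$ the point $x$ lies in the interior of some $I_j$ with $\mu(I_j)>0$ and $g'(x)\neq 0$, so $g$ restricts to an analytic diffeomorphism on a small interval $J=J_x\ni x$.

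First I would run the reduction to the theorem. For $J$ as above, $g\mu=\mu$ gives $g(\mu|_J)\le\mu|_{g(J)}$; writing $g(\mu|_J)=\phi\cdot\mu|_{g(J)}$ and $E=\{\phi>0\}$, one has $(g(\mu|_J))|_E\sim\mu|_E$ with $\mu(E)\ge\mu(J)>0$ and $\mu(J\cap g^{-1}E)=\mu(J)$. Then Theorem~\ref{thm:self-rigidity}, in the local, partially-defined form indicated right after its statement and applied to $g|_J$, yields $g'(x)\in\{\pm a^{k/n}:k\in\mathbb{Z}\}$ for $\mu$-a.e.\ $x\in J$; covering $\supp\mu\setminus N$ by countably many such $J$, this holds for $\mu$-a.e.\ $x$.

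Next I would bootstrap using analyticity. Fix $j$ with $\mu(I_j)>0$; the full-measure set $S=\{x:g'(x)\in\{\pm a^{k/n}\}\}$ meets $I_j$ in a set of positive $\mu$-measure, hence an uncountable set, which is the countable union over $(k,\varepsilon)\in\mathbb{Z}\times\{\pm1\}$ of $\{x\in I_j:g'(x)=\varepsilon a^{k/n}\}$; therefore one summand is uncountable and so has an accumulation point in $\overline{I_j}$. Since $g'$ is analytic on a connected neighbourhood $V_j$ of $\overline{I_j}$, the identity theorem forces $g'\equiv\varepsilon_j a^{k_j/n}$ on $V_j$, so $g$ is affine there. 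Choosing the $V_j$ small enough and setting $U=\bigcup_{j:\mu(I_j)>0}V_j$: the set $\bigcup_{j:\mu(I_j)>0}\overline{I_j}$ is closed and has full $\mu$-measure (its complement lies in $\bigcup_{j:\mu(I_j)=0}\overline{I_j}$, which is $\mu$-null because $\mu$ is non-atomic and each of those $\mu(I_j)=0$), so $U\supseteq\supp\mu$; and on $U$ the map $g$ agrees with an affine map of slope $\pm a^{k/n}$ on each $V_j$, hence is piecewise linear with slopes of this form, the finitely many points of disagreement being corners (or jumps at the $\partial I_j$). That $k$ may be taken in $\mathbb{N}$ on $\supp\mu$ is a mild strengthening I would extract afterwards from properties of $g$ as a self-map of $[0,1]$ preserving a non-atomic measure.

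The step I expect to be the main obstacle is this reduction to Theorem~\ref{thm:self-rigidity}: the theorem is stated for genuine diffeomorphisms of $\mathbb{R}$ under the hypothesis $(f\mu)|_E\sim\mu|_E$, whereas here $g$ is only piecewise defined and $g\mu=\mu$ yields, a priori, only the absolute continuity $g(\mu|_J)\ll\mu$ on the image of a branch. Making precise the passage to restricted branches --- in particular controlling the overlap of preimages of distinct branches so that the local application really pins down $g'$ on a full-measure subset of each $J$ --- is the delicate point; everything downstream (the countable decomposition, uncountability via non-atomicity, the identity theorem, the assembly of $U$) is routine.
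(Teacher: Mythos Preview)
The paper does not prove this corollary explicitly; it is presented as an immediate consequence of Theorem~\ref{thm:self-rigidity}, and your deduction is exactly the intended one. In particular, the step you flagged as the main obstacle is unproblematic: from $g\mu=\mu$ and injectivity of $g|_J$ you correctly get $g(\mu|_J)\le\mu|_{g(J)}$, hence equivalence on $E=\{\phi>0\}$ with $\mu(J\cap g^{-1}E)=\mu(J)$; and the remark immediately following Theorem~\ref{thm:self-rigidity} explicitly licenses applying it to the partially defined map $g|_J$. So your worry there is misplaced.

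The small inaccuracy is in the part you called routine. You write that ``$g'$ is analytic on a connected neighbourhood $V_j$ of $\overline{I_j}$'' and invoke the identity theorem on $V_j$; but $g$ is only \emph{piecewise} analytic, and what is analytic on $V_j$ is the extension of the branch $g|_{I_j}$, which need not agree with $g$ on $V_j\setminus I_j$. The identity theorem therefore yields $g'\equiv\varepsilon_j a^{k_j/n}$ on $I_j$ (hence $g$ affine there), not on all of $V_j$. This already delivers the substance of the corollary --- $g$ is affine with slope in $\{\pm a^{k/n}\}$ on every analytic piece carrying $\mu$-mass, hence piecewise affine on the closed set $C=\bigcup_{\mu(I_j)>0}\overline{I_j}\supseteq\supp\mu$. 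Upgrading this to a genuinely open $U\supseteq\supp\mu$ with $g|_U$ piecewise linear is delicate precisely at boundary points of $C$ that lie in $\supp\mu$ and abut a zero-measure piece $I_k$, since any open neighbourhood of such a point meets $I_k$, where $g$ need not be affine. The paper's phrasing is a little informal on this point; the honest conclusion your argument gives is piecewise linearity on $C$, or agreement with a piecewise linear map on $\supp\mu$. Your observation that the argument yields $k\in\mathbb Z$ rather than $k\in\mathbb N$ is also correct.
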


\begin{cor}
\label{cor:rigidity-with-global-support}If $\mu$ is a $T_{a}$-ergodic
measure on $\mathbb{R}/\mathbb{Z}$ which has intermediate dimension
and is globally supported, then every $C^{1}$-map which preserves
$\mu$ has the form $T_{a'}$ for an integer $a'\sim a$.
\end{cor}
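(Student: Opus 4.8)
The plan is to deduce this directly from Theorem~\ref{thm:self-rigidity}, the main work being to promote its almost-everywhere conclusion to an everywhere statement via continuity of $g'$ and connectedness of the circle, and then to recognize the resulting affine map. Let $g\colon\mathbb{R}/\mathbb{Z}\to\mathbb{R}/\mathbb{Z}$ be a $C^{1}$ map with $g\mu=\mu$. Since $\mu$ is non-atomic, $g$ is not constant, so $U=\{x:g'(x)\neq 0\}$ is a nonempty open set and, $\mu$ being globally supported, $\mu(U)>0$; on $U$ the map $g$ is a local diffeomorphism. Covering $U$ by short intervals on each of which $g$ restricts to a diffeomorphism onto its image, and applying Theorem~\ref{thm:self-rigidity} in the form valid for partially defined diffeomorphisms (as noted just after its statement) with target measure again $\mu$, one gets $g'(x)\in S:=\{\pm a^{k/n}:k\in\mathbb{Z}\}$ for $\mu$-a.e.\ $x\in U$.

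Next I would globalize. Since $\overline S=S\cup\{0\}$, the set $\{x:g'(x)\in\overline S\}$ is closed and of full $\mu$-measure, hence equals $\supp\mu=\mathbb{R}/\mathbb{Z}$; so $g'$ takes all its values in $\overline S$. But $\overline S$ is a countable closed subset of $\mathbb{R}$ all of whose connected components are singletons, so the connected set $g'(\mathbb{R}/\mathbb{Z})$ is a single point: $g'\equiv c$. Here $c\neq 0$, for otherwise $g$ would be constant, contradicting $g\mu=\mu$; thus $c=\pm a^{k/n}$ for some fixed $k\in\mathbb{Z}$. A $C^{1}$ self-map of the circle with constant derivative is affine, $g(x)=cx+b\bmod 1$, and for $g$ to descend to $\mathbb{R}/\mathbb{Z}$ one needs $c=\deg g\in\mathbb{Z}$. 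Hence $a':=|c|$ is a positive integer with $a'^{\,n}=a^{k}$, so $a'$ and $a$ are powers of a common integer, i.e.\ $a'\sim a$, and $g(x)=\pm a'x+b\bmod 1$.

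To upgrade this to $g=T_{a'}$ one must dispose of the orientation and the translation. The orientation is easy: $g^{2}$ also preserves $\mu$ and has positive slope. For the translation, the natural device is that $T_a\circ g$ and $g\circ T_a$ both preserve $\mu$ and, being affine self-maps of $\mathbb{R}/\mathbb{Z}$ of the same slope, differ by the rotation $y\mapsto y+(a-1)b$; hence $\mu$ is invariant under that rotation, and iterating with $T_a^{\ell}$ ($\ell\geq1$) in place of $T_a$, and using that $\mu$ is $T_a$-ergodic of dimension strictly less than $1$, one forces $b=0$.

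I expect the crux to be the globalization of the second paragraph rather than the endgame. Theorem~\ref{thm:self-rigidity} constrains $g'$ only on a $\mu$-full set, and the whole corollary rests on the observation that $g'$ is thereby forced into the totally disconnected set $S\cup\{0\}$, so that continuity on the connected circle converts a measure-theoretic, discrete-valued constraint into outright constancy of $g'$ — which is exactly the role of the full-support hypothesis. Against this, the remaining ingredients — integrality of the slope (from the circle structure), the arithmetic $a'^{n}=a^{k}\Rightarrow a'\sim a$, and the removal of the lower-order term (where one genuinely needs ergodicity, not just invariance, of $\mu$) — are comparatively routine.
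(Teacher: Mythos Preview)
The paper does not prove this corollary explicitly; it is left as an immediate consequence of Theorem~\ref{thm:self-rigidity}, and your first two paragraphs carry out precisely the intended deduction. The theorem forces $g'\in S=\{\pm a^{k/n}:k\in\mathbb{Z}\}$ $\mu$-a.e.\ on $U$, full support together with continuity of $g'$ and total disconnectedness of $S\cup\{0\}$ then make $g'$ constant, and the degree constraint on circle maps makes the slope an integer $\pm a'$ with $a'^{\,n}=a^{k}$, hence $a'\sim a$. This is the substantive content of the corollary and your argument for it is correct.

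Your third paragraph, however, attempts to prove strictly more than holds. The literal conclusion $g=T_{a'}$ can fail: take $\mu$ to be the $(p,1-2p,p)$-Bernoulli measure for $T_{3}$ with $0<p<\tfrac{1}{2}$, $p\neq\tfrac{1}{3}$. This $\mu$ is $T_{3}$-ergodic, globally supported, of intermediate dimension, and symmetric under $R(x)=1-x$; hence both $R$ and $R\circ T_{3}:x\mapsto -3x\bmod 1$ preserve $\mu$, yet neither equals any $T_{a'}$. Passing to $g^{2}$ cannot repair the orientation claim for $g$ itself, and your sketch for forcing $b=0$ is incomplete: invariance of $\mu$ under the rotations by $(a^{\ell}-1)b$ does not by itself contradict $T_{a}$-ergodicity or intermediate dimension when these rotations are rational. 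Read the corollary as asserting that $g$ is affine with slope $\pm a'$ for some integer $a'\sim a$; in that reading your proof, through the second paragraph, is complete.
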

Easy examples show that there can be piecewise linear maps other than
$T_{a}$ which preserve $\mu$, for example one can easily construct
them by hand for the ''uniform'' measure on the middle third Cantor
set (i.e. Hausdorff measure at the appropriate dimension, normalized
to mass 1), which is $T_{3}$-invariant. 

A special case of the above we recover the Rudolph-Johnson Theorem
\cite{Rudolph90,Johnson92}. One may speculate that Theorem \ref{thm:self-rigidity}
holds without the entropy assumption, but proving this would imply
the full $\times2$,$\times3$ conjecture.

Our methods also allow us to generalize the Rudolph-Johnson theorem
in other ways.:
\begin{thm}
\label{thm:smooth-rigidity}If $a\not\sim b$ and $\mu,\nu$ are respectively
$T_{a}$ and $T_{b}$-ergodic measures of intermediate dimension,
then $f\mu\perp\nu$ for every $f\in\diff^{2}(\mathbb{R})$.
\end{thm}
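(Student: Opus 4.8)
Suppose, for contradiction, that $f\mu$ and $\nu$ are not mutually singular for some $f\in\diff^{2}(\mathbb{R})$. Then there is a Borel set $E$ with $\nu(E)>0$ and $(f\mu)|_{E}\sim\nu|_{E}$; in particular $\mu(f^{-1}E)>0$. Shrinking $E$ we may assume that $f$ restricts to a $C^{2}$ diffeomorphism of a neighbourhood of $f^{-1}E$ onto a neighbourhood of $E$, that $f'$ is bounded away from $0$ and $\infty$ there (and, after composing with the reflection $x\mapsto-x$ if necessary — which preserves $T_{a}$-invariance — that $f'>0$), and that $d(f\mu)|_{E}/d\nu|_{E}$ together with its reciprocal are bounded. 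The plan is to compare the \emph{infinitesimal} structure of $\mu$ near $f^{-1}E$ with that of $\nu$ near $E$: the hypothesis $(f\mu)|_{E}\sim\nu|_{E}$ forces these to agree, while $\mu$ carries the arithmetic of $\times a$ and $\nu$ that of $\times b$, and $a\not\sim b$.

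To make this precise, attach to $\mu$ at $\mu$-a.e.\ point $x$ the sequence of magnifications of $\mu$ about $x$, i.e.\ run the scenery flow of $\mu$ from $x$. Since $\mu$ is $T_{a}$-invariant and ergodic, for $\mu$-a.e.\ $x$ the empirical distributions of these magnifications converge to a single ergodic distribution $P$ on the space of measures — the ergodic CP-distribution over the base-$a$ partition operator, or, suspended, an ergodic fractal distribution $\bar P$ for the scenery flow — whose a.e.\ dimension equals $\dim\mu\in(0,1)$. Likewise $\nu$ determines, over base $b$, an ergodic distribution $Q$ (and $\bar Q$) of a.e.\ dimension $\dim\nu\in(0,1)$.

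Now transport $P$ through $f$. Because $f\in C^{2}$ we have $|f(x+h)-f(x)-f'(x)h|\le\tfrac12\|f''\|_{\infty}h^{2}$ uniformly on the relevant set, so magnifying $f\mu$ about $f(x)$ to unit scale agrees, up to an error tending to $0$ with the scale, with magnifying $\mu$ about $x$ and then applying the dilation $v\mapsto f'(x)v$; in the scenery-flow picture this is a constant time-shift by $\log f'(x)$. (This is exactly where $C^{2}$ — or merely $C^{1+\gamma}$ — is used: pointwise differentiability would not give the control, uniform in the scale, needed to pass the error to $0$ simultaneously along the whole orbit.) Hence at $f(x)$ the magnification dynamics of $f\mu$ generates a time-shift of $\bar P$, which, being scenery-flow invariant, is just $\bar P$ again. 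On the other hand equivalent measures have the same tangent measures at a.e.\ point (Lebesgue differentiation makes the magnified Radon–Nikodym derivatives tend to $1$), so $(f\mu)|_{E}\sim\nu|_{E}$ forces, at $\nu$-a.e.\ $y\in E$, the magnification dynamics of $f\mu$ at $y$ to coincide with that of $\nu$ at $y$, i.e.\ with $\bar Q$. Combining, $\bar P=\bar Q$: a single ergodic fractal distribution of dimension strictly between $0$ and $1$ arises both from the $\times a$ system and from the $\times b$ system.

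This is impossible, and proving it is the crux. Via the standard passage from fractal distributions / CP-chains back to invariant measures — take the natural extension and project to the measure coordinate — the coincidence $\bar P=\bar Q$ yields a $T_{a}$-invariant measure $\mu'$ and a $T_{b}$-invariant measure $\nu'$, both of intermediate dimension, that are \emph{not} mutually singular, contradicting the strengthened Rudolph–Johnson theorem \cite{JohnsonRudolph95}. Alternatively one reruns, directly at the level of CP-chains, the entropy argument of Rudolph and Johnson: intermediate dimension means the $\times a$-entropy lies strictly between $0$ and $\log a$ (and similarly for $b$), and the coexistence of the two digit structures on a common set of positive measure is excluded exactly as there. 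In either form it is this arithmetic incompatibility of $\times a$ and $\times b$, rather than the $C^{2}$ geometry disposed of in the first three steps, that is the main obstacle.
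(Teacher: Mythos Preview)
Your steps (1)--(3) are correct and match the paper: equivalent measures generate the same scenery at a.e.\ point, and pushing by a diffeomorphism only time-shifts the scenery, so $(f\mu)|_E\sim\nu|_E$ forces the ergodic scenery distributions to agree, $\bar P=\bar Q$. One correction: this transport needs only $f\in\diff^1$ (Lemma~\ref{lem:diffeo-shifts-sceneries}); you have mislocated the role of the $C^2$ hypothesis.

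The genuine gap is step (4). The coincidence $\bar P=\bar Q$ does \emph{not} reduce to Rudolph--Johnson in the way you suggest. Your ``passage back to invariant measures'' recovers $\mu$ from $\bar P$ by discretizing the scenery flow at period $\log a$, and $\nu$ from $\bar Q=\bar P$ by discretizing at period $\log b$; these are different sections of the \emph{same} flow and there is no reason the two reconstructed measures are non-singular to one another. Nor does the spectrum of the flow by itself give a contradiction: Theorem~\ref{thm:identification-of-spectrum} puts $\frac{n}{\log a}\in\Sigma_{\bar P}$, but nothing forbids $([0,1],\nu,T_b)$ from having the irrational eigenvalue $\frac{n\log b}{\log a}$, in which case $\frac{n}{\log a}\in\Sigma_{\bar Q}$ too. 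This is exactly the obstruction noted after Theorem~\ref{thm:identification-of-spectrum}, where your argument is shown to go through \emph{only} under an extra spectral assumption on $\nu$. To remove that assumption the paper introduces a finer invariant, the \emph{phase measure} $\theta_\alpha$ (Definition~\ref{def:phase-measure}, Theorem~\ref{thm:phase-distribution}): for $T_a$-ergodic $\mu$ and $\alpha=\frac{n}{\log a}$ the phase of the eigenfunction is a.s.\ constant as the base point varies over $\mu$; applying $f$ rotates it by $e(-\alpha\log f'(y))$, a Lipschitz image of $\mu$ and hence singular with respect to Lebesgue (this, via $f'\in C^1$, is where $C^2$ is actually used --- Proposition~\ref{cor:singularity-of-perturbed-phase-measure}); whereas for $\nu$, since $\alpha\notin\frac{1}{\log b}\mathbb{Q}$ when $a\not\sim b$, the phase equidistributes and $\theta_\alpha(\nu)$ is Lebesgue. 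These cannot be equivalent, and that is the contradiction.
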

We can eliminate the ergodicity and regularity assumptions under some
(rather weak) additional hypotheses, for example no ergodicity is
needed if $f$ is affine, or when when $f\in\diff(\mathbb{R})$ but
the ergodic components of $\nu$ under $T_{b}$ do not have spectrum
of the form $\frac{n}{\log a}$. 

An interesting consequence of the theorem above is that the measure
rigidity phenomenon in the Rudolph-Johnson Theorem is not a consequence
of properties of the abelian action generated by $T_{a}$ and $T_{b}$,
but rather of the smooth conjugacy classes of the individual maps
$T_{a},T_{b}$:
\begin{cor}
\label{cor:conjugates-rigidity}Let $a\not\sim b$ and let $f,g$
be self-maps of $\mathbb{R}/\mathbb{Z}$ which are (separately) $C^{2}$-conjugate
to $T_{a},T_{b}$, respectively. Then there is no measure of positive
Hausdorff dimension which is ergodic for both $f$ and $g$, except
possibly one which is equivalent to Lebesgue, and this occurs precisely
when the conjugating maps differ by a rotation.
\end{cor}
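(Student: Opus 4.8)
The plan is to transport a hypothetical common measure through the conjugacies to the model maps $T_a,T_b$ and then invoke Theorem~\ref{thm:smooth-rigidity}, treating the borderline dimension~$1$ separately. Write $f=\phi^{-1}\circ T_a\circ\phi$ and $g=\psi^{-1}\circ T_b\circ\psi$ with $\phi,\psi\in\diff^2(\mathbb{R}/\mathbb{Z})$; since $x\mapsto -x$ commutes with every $T_c$, composing $\phi$ or $\psi$ with $x\mapsto -x$ if necessary we may assume both are orientation preserving without changing $f,g$. Suppose $\lambda$ is a probability measure of positive Hausdorff dimension that is ergodic (in particular invariant) for both $f$ and $g$, and set $\mu=\phi\lambda$, $\nu=\psi\lambda$. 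Since $T_a\circ\phi=\phi\circ f$ and $T_b\circ\psi=\psi\circ g$, the systems $(\lambda,f)$ and $(\lambda,g)$ are Borel isomorphic via $\phi,\psi$ to $(\mu,T_a)$ and $(\nu,T_b)$, so $\mu$ is $T_a$-ergodic, $\nu$ is $T_b$-ergodic, and as circle diffeomorphisms are bi-Lipschitz, $\dim\mu=\dim\nu=\dim\lambda=:d\in(0,1]$ (well defined, since for $T_a$- or $T_b$-ergodic measures the local dimension is a.e.\ constant).

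First I would rule out $d<1$. In that case $\mu,\nu$ have intermediate dimension, and the circle diffeomorphism $\eta:=\psi\circ\phi^{-1}$ satisfies $\eta\mu=\psi\lambda=\nu$. But Theorem~\ref{thm:smooth-rigidity}, applied locally in a coordinate chart, gives $\eta\mu\perp\nu$ because $a\not\sim b$; since a nonzero measure is never singular to itself, this is a contradiction. Hence $d=1$. Now $\mu$ is a $T_a$-ergodic measure whose local dimension is a.e.\ $1$; the $n$-cylinders of $T_a$ being intervals of length $a^{-n}$, Shannon--McMillan--Breiman gives $\dim\mu=h_\mu(T_a)/\log a$, so $h_\mu(T_a)=\log a$ and $\mu$ is the unique measure of maximal entropy of $T_a$, namely Lebesgue measure $m$. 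Thus $\lambda=\phi^{-1}m$, and since $\phi$ is a diffeomorphism, $\lambda\sim m$. This already shows that a common ergodic measure of positive dimension, if one exists, must be equivalent to Lebesgue.

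It remains to see exactly when such a measure exists. If it does, the previous step gives $\lambda\sim m$, hence $\mu=\phi\lambda\ll m$ is a $T_a$-invariant measure absolutely continuous with respect to the $T_a$-ergodic measure $m$; therefore $\mu=m$, and likewise $\nu=m$. Then $\eta m=\eta\mu=\nu=m$, so $\eta=\psi\circ\phi^{-1}$ preserves Lebesgue measure, and by the change-of-variables formula $|\eta'|\equiv 1$; being orientation preserving, $\eta'\equiv 1$, i.e.\ $\eta$ is a rotation, so the conjugating maps differ by a rotation. Conversely, if $\psi\circ\phi^{-1}=R$ is a rotation, put $\lambda=\phi^{-1}m$. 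Then $\lambda\sim m$ has dimension $1$; the system $(\lambda,f)$ is isomorphic via $\phi$ to $(m,T_a)$, which is ergodic, and $\psi\lambda=(\psi\circ\phi^{-1})m=Rm=m$, so $(\lambda,g)$ is isomorphic via $\psi$ to $(m,T_b)$, which is ergodic. Hence $\lambda$ is a common ergodic measure of positive dimension, equivalent to Lebesgue, which is the remaining assertion.

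Modulo Theorem~\ref{thm:smooth-rigidity} the argument is essentially bookkeeping, and I do not expect a genuine obstacle. The one step that demands care is the borderline case $d=1$: it lies outside the intermediate-dimension hypothesis of Theorem~\ref{thm:smooth-rigidity}, and must instead be handled by identifying dimension-one $T_a$-ergodic measures with the measure of maximal entropy and hence with Lebesgue.
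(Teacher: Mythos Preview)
Your argument is correct and follows essentially the same route as the paper: transport the common measure via the conjugacies to $T_a$- and $T_b$-ergodic measures related by the $C^2$ map $\psi\circ\phi^{-1}$, invoke Theorem~\ref{thm:smooth-rigidity} to rule out intermediate dimension, and in the remaining case $d=1$ identify the pushed measures with Lebesgue to conclude that $\psi\circ\phi^{-1}$ is a rotation. Your treatment is slightly more explicit than the paper's (the $d=1$ case via Shannon--McMillan--Breiman and maximal entropy, orientation, and the converse direction), but the approach is the same.
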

Note that for $f,g$ as above there will generally be no invariant
measures at all, but it is hard to verify this for any particular
pair of conjugates. It is known that if $f,g$ commute then they are
simultaneously $C^{0}$-conjugate to $T_{a},T_{b}$ \cite{JohnsonRudolph92},
and then Corollary \ref{cor:conjugates-rigidity} follows from the
Rudolph-Johnson Theorem.

After this paper was completed P. Shmerkin suggested another approach
which proves Theorem \ref{thm:smooth-rigidity} for $f\in\diff(\mathbb{R})$
and non-ergodic measures, but which does not give any version of Theorem
\ref{thm:self-rigidity}. This will appear elsewhere.

\subsection{\label{sub:methods}Methods}

To arrive at these results we study measures on $\mathbb{R}^{d}$
through the dynamics of the 1-parameter, measure-valued family obtained
by {}``zooming in'' on typical points for the measure. These families
are called sceneries, and for the measures we are considering they
behave like generic orbits in an appropriate dynamical system. Many
variants of the notion of a scenery have appeared in the fractal geometry
literature, see e.g. Bedford and Fisher \cite{BedfordFisher97}, and
have been used as a technical tool in the study of $T_{m}$-invariant
measures, in disguised form in Furstenberg's paper \cite{Furstenberg70}
and more recently in \cite{HochmanShmerkin09}. Our definition of
the scenery flow follows that of Gavish \cite{Gavish09}. A systematic
study of this notion and related ones can be found in \cite{Hochman09}.

Briefly, we show that for a $T_{a}$-invariant measure the sceneries
equidistribute for an ergodic flow whose pure-point spectrum contains
a rational multiple of $\frac{1}{\log a}$, and the remainig spectrum
comes from the original dynamics of $T_{a}$. Since these flows are
associated to measures in a geometric way, they are invariants of
the measure under the application of differentiable, locally bijective
maps. Furthermore, the flows derived at different points of a $T_{a}$-invariant
measure may have different phases with respect eigenvalues of the
form $\frac{n}{\log a}$, and applying a smooth map shifts the phase
by the logarithm of the derivative. The behavior of these phases underlies
the proof of Theorems \ref{thm:self-rigidity} and \ref{thm:smooth-rigidity}. 

This method of proof gives an effective condition for a measure $\mu$
to be a smooth image of a $T_{a}$-invariant measure, namely the spectrum
of the associated flow must contain a rational multiple of $\log a$.
In a sense, this explains how $\mu$ {}``encodes'' the arithmetic
class of the dynamics which generated it.

\subsection{\label{sub:Related-questions}Related questions}

We end this introduction with some open questions. Let us begin by
pointing out a connection between Theorem \ref{thm:smooth-rigidity}
and another conjecture of Furstenberg \cite{Furstenberg70}: If $X,Y\subseteq[0,1]$
are closed and invariant, respectively, under $T_{a}$ and $T_{b}$
for $a\not\sim b$, then for every affine map $f(x)=ux+v$,\[
\dim(X\cap fY)\leq\max\{0,\dim X+\dim Y-1\}\]
This says that all affine images of $Y$ should intersect $X$ in
as small a set as possible. Theorem \ref{thm:smooth-rigidity} gives
an analog of this for measures, though of course singularity of measures
implies nothing about the intersection of their topological supports.
On the other hand, note that Theorem \ref{thm:smooth-rigidity} has
content even when both $\mu,\nu$ are globally supported. 

Returning to the $\times2,\times3$ conjecture, the topological version
was proved by Furstenberg with no entropy assumptions: any closed
infinite subsect of $[0,1]$ which is invariatn under $T_{a}$,$T_{b}$
for $a\not\sim b$ is the entire interval. One may similarly ask for
topological versions of our results:
\begin{problem}
Suppose $A\subseteq[0,1]$ is an infinite, proper closed $T_{a}$-invariant
subset. If $f\in\diff^{1}(\mathbb{R})$ preserves $A$, must $|f'(x)|\in\{a^{r}\,:\, r\in\mathbb{Q}\}$
for all non-isolated $x\in A$? 
\end{problem}

\begin{problem}
\label{pro:topological-geometric-rigidity-1}Let $a\not\sim b$ and
let $f,g$ be maps of $\mathbb{R}/\mathbb{Z}$ which are (separately)
conjugate, respectively, to $T_{a},T_{b}$. If the conjugating maps
are sufficiently smooth, can we conclude that there are no infinite,
closed proper subsets of $\mathbb{R}/\mathbb{Z}$ which are jointly
$f$- and $g$-invariant? 
\end{problem}
We do not have answers except when for some $s>0$ the $s$-dimensional
Hausdorff measure is positive and finite on $A$. Then one can apply
our results to this measure.

In another direction, there is a strengthening of the Rudolph-Johnson
Theorem due Host \cite{Host95}, which assets that for $\gcd(a,b)=1$,
if $\mu$ is $T_{a}$-invariant of intermediate dimension, then $\mu$-a.e.
point $x$ equidistributes for Lebesgue measure under the action of
$T_{b}$ (in this case $x$ is said to be \emph{normal }in base $b$).
It is natural to ask whether the same is true when the measure is
distorted by a nice enough map. Let us formulate this question in
the simplest and most plausible case: 
\begin{problem}
Let $\mu$ be a $T_{a}$-invariant measure and of intermediate dimension,
and $\gcd(a,b)=1$. If $f(x)=ux+v$, $u\neq0$, is $f\mu$-a.e. point
normal in base $b$?

Finally, it is very likely that analogs of Theorems \ref{thm:self-rigidity}
and \ref{thm:smooth-rigidity} ours hold for more general interval
maps and in higher dimensional settings, for instance for measures
on the torus which are invariant under hyperbolic automorphisms under
suitable assumptions. However, in neither case does it appear that
our methods apply directly.
\end{problem}

\subsection{Organization}

In the next section we describe our methods in more detail. Proofs
are given in the following sections. We assume familiarity with basic
notions in ergodic theory, recalling some definitions as we go; for
an introduction see \cite{Walters82}. For background on geometric
measure theory see \cite{Mattila95}.

\subsection{Acknowledgment}

I am greatful to Jean Bourgain, Elon Lindenstrauss and Pablo Shmerkin
for their comments.

\section{\label{sec:Outline-of-the-argument}Main elements of the proofs}

In this section we give our main definitions and technical results,
and derive the main theorems from them. The remaining proofs are provided
in the next section.

\subsection{\label{sub:The-scenery-flow}The scenery flow}

Let $\mathcal{M}=\mathcal{M}_{d}$ denote the space of Radon measures
on $\mathbb{R}^{d}$, endowed with the weak topology. We use the term
\emph{measure }for Radon measures on $\mathbb{R}^{d}$, and denote
measures by $\mu,\nu,\sigma,\tau$ etc. We reserve the term \emph{distribution
}for Borel probability measures on $\mathcal{M}$, which we denote
by $P,Q,R$ etc. The space of distributions carries a measurable structure
defined by declaring the map $\mu\mapsto\mu(A)$ to be measurable
for all Borel sets $A\subseteq\mathcal{M}$. Write $\lambda$ for
Lebesgue measure and $\delta_{z}$ for the point mass at $z$, which
is a measure when $z\in\mathbb{R}^{d}$ and a distribution when $z\in\mathcal{M}$.
Let $\supp\mu$ denote the topological support of a measure, that
is, the complement of the union of all open sets of $\mu$-measure
zero. We write $\sim$ for equivalence of measures or distributions,
and also write $z\sim\mu$ to indicate that $z$ is distributed according
to $\mu$; which is intended will be clear from the context. 

For $x\in\mathbb{R}^{d}$ let $U_{x}:\mathbb{R}^{d}\rightarrow\mathbb{R}^{d}$
denote the translation map \[
U_{x}(y)=y-x\]
 and for $t\in\mathbb{R}$ let $S_{t}:\mathbb{R}^{d}\rightarrow\mathbb{R}^{d}$
denote the scaling map \[
S_{t}(x)=e^{t}x\]
Note the exponential time scale, which makes $S=(S_{t})_{t\in\mathbb{R}}$
into an action of the additive group $\mathbb{R}$ on $\mathbb{R}^{d}$.
These operations induce maps on $\mathcal{M}$: for $\mu\in\mathcal{M}$
we have $U_{x}\mu(A)=\mu(A+x)$ and $S_{t}\mu(A)=\mu(e^{-t}A)$. 

Let $\mu\mapsto\mu^{\sqr}$ denote the operation of normalizing a
measure to have unit mass on $[-1,1]^{d}$ and restricting it to this
cube, i.e.\[
\mu^{\sqr}=\frac{1}{\mu([-1,1]^{d})}\mu|_{[-1,1]^{d}}\]
Let\[
\mathcal{M}^{\sqr}=\{\mbox{Probability measures on }\mathbb{R}\mbox{ supported on }[-1,1]^{d}\}\]
and define $S_{t}^{\sqr}:\mathcal{M}^{\sqr}\rightarrow\mathcal{M}^{\sqr}$
by \[
S_{t}^{\sqr}\mu=(S_{t}\mu)^{\sqr}\]
so that $S^{\sqr}=(S_{t}^{\sqr})_{t\geq0}$ is a semigroup acting
on the set of $\mu\in\mathcal{M}^{\sqr}$ with $0\in\supp\mu$. This
is a Borel subset of $\mathcal{M}$, and the action is Borel, though
not continuous (it is discontinuous at measures which give positive
mass to the boundary of $[-1,1]^{d}$).
\begin{defn}
\label{def:scenery}Let $\mu\in\mathcal{M}$ and $x\in\supp\mu$.
The \emph{scenery }of $\mu$ at $x$ is the orbit of $U_{x}\mu$ under
$S^{\sqr}$, i.e. the one-parameter family \[
\mu_{x,t}=S_{t}^{\sqr}(U_{x}\mu)\qquad t\in\mathbb{R}^{+}\]

\end{defn}
In other words, the scenery is what one sees when {}``zooming in''
to $\mu$ at $x$, restricting and normalizing the measure as we go. 

In order to discuss the limiting behavior of the scenery, note that
$\mathcal{M}^{\sqr}$ may be identified with the weak-{*} compact
set of probability measures on $[-1,1]^{d}$. Thus we may speak of
convergence of distributions on $\mathcal{M}^{\sqr}$.
\begin{defn}
\label{def:scenery-flow} A measure $\mu\in\mathcal{M}$ \emph{generates
}a distribution $P$ at $x\in\supp\mu$ if the scenery $(\mu_{x,t})_{t\geq0}$
equidistributes for $P$, that is, if the uniform measure on the path
$(\mu_{x,t})_{0\leq t\leq T}$ converges weak-{*} to $P$ as $T\rightarrow\infty$.
Equivalently, for every $f\in C(\mathcal{M}^{\sqr})$,\[
\lim_{T\rightarrow\infty}\frac{1}{T}\int_{0}^{T}f(\mu_{x,t})dt=\int fdP\]
In this case $(\mathcal{M}^{\sqr},P,S^{\sqr})$ is called the \emph{scenery
flow} of $\mu$ at $x$.
\end{defn}
For a discussion of the properties of distributions generated in this
way see \cite{Hochman09}. We mention a few basic facts. First, if
there is a positive $\mu$-measure of points at which $\mu$ generates
some distribution (which may vary from point to point), then $\mu$-a.e.
one of the distributions is $S^{\sqr}$-invariant.%
\footnote{Note that $S^{\sqr}$ acts discontinuously, so this is not a complete
triviality.%
} Second, standard density arguments show that if $\nu\ll\mu$ then
a $\nu$-typical point generates a distribution for $\nu$ if and
only if it does for $\mu$, and in this case  the distributions are
the same. This applies in particular when $\nu=\mu|_{A}$.

The following simple observation is a key ingredient in our arguments.
Let $\diff_{+}(\mathbb{R})\subseteq\diff(\mathbb{R})$ denote the
subgroup of orientation-preserving maps. 
\begin{lem}
\label{lem:diffeo-shifts-sceneries}Let $\mu\in\mathcal{M}(\mathbb{R})$,
$x\in\supp\mu$ and $f\in\diff_{+}^{1}(\mathbb{R})$. Then after a
time-shift of $s=\log f'(x)$ the sceneries $\mu_{x,t}$ and $(f\mu)_{f(x),t}$
are asymptotic, i.e. \[
\lim_{t\rightarrow\infty}(\mu_{x,t}-(f\mu)_{f(x),t-s})=0\qquad\mbox{(weak-*)}\]
In particular $\mu$ generates $P$ at $x$ if and only if $f\mu$
generates $P$ at $f(x)$. 
\end{lem}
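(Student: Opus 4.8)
The plan is to reduce the statement to a purely local estimate: near $x$, the diffeomorphism $f$ looks like the affine map $y \mapsto f(x) + f'(x)(y-x)$, and affine maps interact with the scenery operators $S^{\sqr}_t$ and $U_x$ in an exact, algebraic way. First I would fix the orientation-preserving case so that $f'(x) > 0$ and $s = \log f'(x)$ is well-defined, and normalize by composing with translations so that we may assume $x = 0$ and $f(0) = 0$ (translations commute with scalings and only affect the $U$-part, so no generality is lost). With this normalization $f(y) = f'(0)y + o(y)$ as $y \to 0$, i.e. $f(y) = e^{s}y(1 + \varepsilon(y))$ with $\varepsilon(y) \to 0$.

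The key computation is to compare, for large $t$, the measure $(f\mu)_{0,t-s} = S^{\sqr}_{t-s}(f\mu)$ with $\mu_{0,t} = S^{\sqr}_t \mu$. Unwinding the definitions, $S^{\sqr}_{t-s}(f\mu)$ is the push-forward of $\mu$ under the map $y \mapsto e^{t-s} f(y)$, restricted and renormalized to $[-1,1]$; since $e^{t-s}f(y) = e^{t}y(1+\varepsilon(y))$, this is the push-forward of $\mu$ under $y \mapsto e^{t}y(1+\varepsilon(y))$. On the other hand $S^{\sqr}_t\mu$ is the push-forward under $y \mapsto e^{t}y$. So the two measures are related by the map $g_t(z) = z \cdot (1 + \varepsilon(e^{-t}z))$ acting on $[-1,1]$ (after renormalization): the point originally at height $e^{-t}z$ in the scale-$t$ picture gets moved to $z(1+\varepsilon(e^{-t}z))$. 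The plan is then to observe that $\sup_{|z|\le 1}|g_t(z) - z| = \sup_{|z|\le 1}|z\varepsilon(e^{-t}z)| \le \sup_{|w| \le e^{-t}}|\varepsilon(w)| \to 0$ as $t \to \infty$. Thus $g_t \to \mathrm{id}$ uniformly on $[-1,1]$, and a measure pushed forward under a map uniformly close to the identity is weak-$*$ close to the original; one also checks the renormalizing constants $\mu(\{|e^{t}y(1+\varepsilon(y))| \le 1\})$ and $\mu(\{|e^{t}y|\le 1\})$ are asymptotic, using that $0 \in \supp\mu$ and that $\mu$ gives the relevant boundary spheres measure zero along a full-measure set of scales (the same mild caveat already flagged for discontinuity of $S^{\sqr}$). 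This gives the asymptotic vanishing of $\mu_{0,t} - (f\mu)_{0,t-s}$ in the weak-$*$ sense, which is the claim.

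The main obstacle is the renormalization/truncation bookkeeping at the boundary of the cube $[-1,1]$: $S^{\sqr}_t$ involves both a restriction to $[-1,1]$ and a division by the mass there, and these operations are discontinuous exactly at measures charging $\{-1,1\}$. The honest way to handle this is to test against $f \in C(\mathcal{M}^{\sqr})$ (equivalently, integrate continuous functions on $[-1,1]$ against the scenery measures) and push all the error terms through: a continuous test function $\varphi$ on $[-1,1]$ has $|\varphi \circ g_t - \varphi| \to 0$ uniformly by uniform continuity of $\varphi$ and uniform convergence $g_t \to \mathrm{id}$, which controls the restricted-measure comparison, and the mass-ratio converging to $1$ controls the renormalization — but the mass-ratio claim genuinely needs that we are at a point where the scenery is well-behaved, so strictly the asymptotic should be read along the scales that matter (which suffices, since generating a distribution is an averaging statement over $t$). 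Once this is set up, the final sentence — $\mu$ generates $P$ at $x$ iff $f\mu$ generates $P$ at $f(x)$ — is immediate, since a time-shift by the constant $s$ does not affect Cesàro averages, and two asymptotic paths have the same set of weak-$*$ limit distributions of their uniform averages. For the orientation-reversing case one writes $f = R \circ f_+$ with $R(y) = -y$ and notes $R$ induces a homeomorphism of $\mathcal{M}^{\sqr}$ commuting with $S^{\sqr}$, so it changes $P$ by the reflection but leaves the generation statement (as stated for $\diff_+$) untouched.
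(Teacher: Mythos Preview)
Your proposal is correct and follows exactly the approach the paper has in mind: the paper's proof is the single sentence ``The proof is immediate from the fact that, locally, $f$ acts like $S_{s}$ near $x$,'' and your argument is a careful unpacking of precisely this idea. Your flagging of the renormalization/boundary subtlety is apt and goes beyond what the paper spells out; as you note, it is harmless for the equidistribution conclusion, which is the only part actually used downstream.
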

The proof is immediate from the fact that, locally, $f$ acts like
$S_{s}$ near $x$. 

The assumption that $f$ preserve orientation is necessary for the
conclusion that the scenery flows are the same, but if $f$ is orientation
reversing then the scenery flows are isomorphic as measure preserving
flows by way of the map induced on $\mathcal{M}^{\sqr}$ from $x\mapsto-x$.
We omit the details.

\subsection{\label{sub:scenery-flow-of-invariant-measures}The scenery flow of
$T_{a}$-invariant measures and its spectral properties}

Next we describe the scenery flow of a $T_{a}$-invariant measure
and, more generally, products of such measures. Recall that the diagonal
action of $T_{a}$ on $[0,1]^{d}$ is given by $T_{a}(x)=(T_{a}x_{1},T_{a}x_{2},\ldots,T_{a}x_{d})$.
A product of $T_{a}$-invariant measures is invariant under the diagonal
action. 

We require two more standard constructions, which we recall briefly.
First, the natural extension of an ergodic system $(\Omega,\nu,T)$
is an invertible ergodic system $(\widetilde{\Omega},\widetilde{\nu},\widetilde{T})$
factoring onto $(\Omega,\nu,T)$ and characterized by the property
that every factor map from an invertible system to $(\Omega,\nu,T)$
factors through $(\widetilde{\Omega},\widetilde{\nu},\widetilde{T})$.
The natural extension may be realized as the inverse limit of the
diagram of factor maps \[
\ldots\rightarrow(\Omega,\nu,T)\xrightarrow{T}(\Omega,\nu,T)\xrightarrow{T}(\Omega,\nu,T)\]
See also Section \ref{sec:The-scenery-process}. 

Second, the $t_{0}$-suspension of the discrete time system $(\Omega,\nu,T)$
is the flow defined on $\Omega\times[0,t_{0}]$ by \[
T_{t}(\omega,s)=(T^{[s/t_{0}]}\omega,\{\frac{s+t}{t_{0}}\}t_{0})\]
where $[r]$ and $\{r\}$ are the integer and fractional parts of
$r$, respectively. This flow preserves the product measure $\nu\times\frac{1}{t_{0}}\lambda|_{[0,t_{0})}$.
\begin{prop}
\label{pro:identification of generated-distributions-1}Let $\mu_{1},\ldots,\mu_{d}$
be $T_{a}$-invariant measures and $\mu=\times_{i=1}^{d}\mu_{i}$.
Then $\mu$ generates an $S^{\sqr}$-ergodic distribution $P_{x}$
at a.e. point $x$, and the system $(\mathcal{M}^{\sqr},P_{x},S^{\sqr})$
arises as a factor of the $\log b$-suspension of the ergodic component
$\mu^{(x)}$ of $x$ in $([0,1]^{d},\mu,T_{a})$. In particular $P_{x}$
depends only on the ergodic component $\mu^{(x)}$ of $x$. Furthermore,
if $\mu^{(x)}$ has intermediate dimension then $P_{x}$ is supported
on measures of intermediate dimension.
\end{prop}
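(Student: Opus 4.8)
The plan is to analyze the scenery flow of a product $\mu=\times_{i=1}^d\mu_i$ of $T_a$-invariant measures by relating the zooming-in dynamics at a point $x$ to the symbolic/arithmetic structure of $T_a$. The key observation is that zooming into a base-$a$ self-map's invariant measure by a factor $e^t$ corresponds, when $t$ is a multiple of $\log a$, to applying the shift $T_a$ on the $a$-adic expansion; so the natural object governing the scenery is the $\log a$-suspension of the natural extension (or, more precisely, of the system itself, since we restrict to $[-1,1]^d$ and normalize). First I would fix notation for the $a$-adic partition: for $x\in[0,1]^d$ with digit sequence $(\xi_1(x),\xi_2(x),\ldots)$ in $\{0,\ldots,a-1\}^d$, the level-$n$ dyadic-type cube containing $x$ determines $\mu_{x,n\log a}$ up to the affine renormalization that rescales that cube to $[-1,1]^d$. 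I would make this precise as a coding map $\Phi:[0,1]^d\times[0,\log a)\to\mathcal{M}^\sqr$ (depending on $\mu$), sending $(x,s)$ to the normalized restriction to $[-1,1]^d$ of $S_s U_x\mu$ reassembled from the appropriate $a$-adic cube; the point is that $\Phi$ intertwines the $\log a$-suspension flow of $([0,1]^d,\mu,T_a)$ with $S^\sqr$ up to an asymptotically negligible boundary error.

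The main steps, in order. (1) Reduce to ergodic $\mu$: by the remarks after Definition~\ref{def:scenery-flow} and since the ergodic decomposition of a product is governed by the ergodic components of the factors, it suffices to prove the statement when each $\mu_i$ is $T_a$-ergodic, and a.e.\ $x$ lies in an ergodic component $\mu^{(x)}$; I would then work with a single ergodic component and its natural extension. (2) Establish the coding: show that for $\mu$-a.e.\ $x$ and for $t=n\log a+s$ with $0\le s<\log a$, $\mu_{x,t}$ equals $\Phi(T_a^n x, s)$ up to an error tending weak-$*$ to $0$ along a density-one set of $n$; the error comes only from the discrepancy between the cube $[-1,1]^d$ and the true level-$n$ $a$-adic cube, which is controlled because, by intermediate dimension, $\mu^{(x)}$ gives no mass to $a$-adic hyperplanes (using that a $T_a$-invariant measure of positive dimension is non-atomic and the Hausdorff dimension of the hyperplane set is small — more carefully, one uses that the scaled measure gives vanishing mass to the boundary for a.e.\ time, by a Fubini/Borel–Cantelli argument along the suspension). (3) Transfer equidistribution: by the pointwise ergodic theorem applied to the $\log a$-suspension of the (ergodic) natural extension $(\widetilde\Omega,\widetilde\nu,\widetilde{T_a})$, the orbit $(T_a^{\lfloor\cdot\rfloor}x,\{\cdot\})$ equidistributes for $\widetilde\nu\times\frac1{\log a}\lambda$; pushing forward by $\Phi$ and using Step (2) and Lemma~\ref{lem:diffeo-shifts-sceneries}-style closeness, conclude that $\mu_{x,t}$ equidistributes for $P_x:=\Phi_*(\widetilde\nu\times\frac1{\log a}\lambda)$, which is exactly a factor of the $\log a$-suspension, is $S^\sqr$-invariant, and is $S^\sqr$-ergodic because the suspension of an ergodic transformation is ergodic. (4) Dimension: for $P_x$-a.e.\ measure $\theta$, compute $D(\theta,0)$; since $\theta$ is (up to renormalization) a limit of rescalings of $\mu^{(x)}$, a standard argument (the local dimension is preserved under the zooming, by Egorov plus the subadditive/Shannon–McMillan–Breiman estimate for $\mu^{(x)}$) gives $D(\theta,0)=\dim\mu^{(x)}\in(0,1)$, and in dimension $d$ one argues the same for the relevant marginal scale; so $P_x$ is supported on measures of intermediate dimension.

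The hard part will be Step (2): making the passage from ``zoom in at $x$'' to ``symbolic shift $T_a^n$ at $x$'' precise and uniform enough that the weak-$*$ limit is genuinely the pushforward $\Phi_*(\widetilde\nu\times\lambda)$ and not merely a measure absolutely continuous with respect to it. The subtlety is the discontinuity of $S^\sqr$ at measures charging the boundary of $[-1,1]^d$ (flagged in the text after the definition of $S^\sqr$), so one must show that along a density-one set of scales the renormalized measure $\mu_{x,t}$ assigns mass $\to 0$ to a neighborhood of $\partial[-1,1]^d$; this is where intermediate dimension is essential and where I expect the real work to lie — one wants an estimate of the form: for $\mu$-a.e.\ $x$, $\frac1T\lambda\{t\le T:\mu_{x,t}(\text{$\varepsilon$-nbhd of }\partial[-1,1]^d)>\delta\}\to 0$ as $T\to\infty$ then $\varepsilon\to0$, proved via the ergodic theorem applied to the function $\theta\mapsto\theta(\partial$-neighborhood$)$ on the suspension together with a dominated-convergence argument letting $\varepsilon\to0$. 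Once the coding and the boundary control are in place, the ergodicity, the suspension structure, and the intermediate-dimension conclusion all follow from standard ergodic theory and geometric measure theory, and the dependence of $P_x$ only on $\mu^{(x)}$ is immediate from the construction.
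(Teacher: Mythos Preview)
Your outline has the right architecture (suspension structure, ergodic theorem, equidistribution of sceneries), but Step~(2) contains a genuine gap: the coding map $\Phi$ cannot be defined on $[0,1]^d\times[0,\log a)$ as you propose. The claim that $\mu_{x,n\log a+s}\approx\Phi(T_a^n x,s)$ with a fixed $\Phi$ fails, and not because of boundary effects. The measure $\mu_{x,n\log a}$ is, up to recentering, the normalized restriction of $\mu$ to the level-$n$ $a$-adic cube $I_n(x)$; but $T_a^n$ maps $I_n(x)$ affinely onto $[0,1]$, and $(T_a^n)_*(\mu|_{I_n(x)})$ is the \emph{conditional} measure $\mu(\,\cdot\mid I_n(x))$, not $\mu$. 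This conditional depends on the first $n$ digits of $x$, which are precisely what $T_a^n$ discards. So $\mu_{x,n\log a}$ is not a function of $T_a^n x$ at all, and your intertwining fails for every non-Bernoulli $T_a$-invariant measure, even on the exact $a$-adic cube. (You invoke the natural extension in Step~(3), but then apply $\Phi$ to the forward orbit $(T_a^n x,s)$, which lives in $[0,1]^d$; the inconsistency is a symptom of this issue.)

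What the paper does is move the coding to the natural extension $\Omega=\{0,\ldots,a-1\}^{\mathbb Z}$ and define, for each two-sided sequence $\omega$, a measure $\mu_\omega$ as the increasing limit of the conditional measures given the infinite past $\omega_{(-\infty,0]}$. This map $\omega\mapsto\mu_\omega^*$ genuinely intertwines the shift with $S_{\log a}^*$. One then must show that the actual scenery $\mu_{x,k\log a}$, which sees only the \emph{finite} past $\omega_1\ldots\omega_k$, is asymptotic to $\mu_{T^k\omega}$; this is a martingale convergence (conditional expectations given increasing $\sigma$-algebras of past coordinates), and because the approximating function varies with $k$ one needs Maker's ergodic theorem rather than Birkhoff's. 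The boundary/discontinuity issue you flag as the hard part is comparatively minor once one works in the extended space $\mathcal M^*$; the real content --- conditional measures on the past and the martingale step --- is absent from your proposal.
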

This is proved in Section \ref{sec:The-scenery-process}. 

Write $e(t)=\exp(2\pi it)$. Recall that $\alpha\in\mathbb{R}$ is
an eigenvalue of an ergodic measure preserving system $(\Omega,\mathcal{B},\nu,T)$
if there is a complex function $\varphi\in L^{2}$ such that $\varphi\circ T=e(\alpha)\varphi$,
and $\alpha$ is an eigenvalue of a measure-preserving flow $(\Omega,\mathcal{B},\nu,(T_{t})_{t\in\mathbb{R}})$
if there is a function $\varphi\in L^{2}$ such that $\varphi\circ T_{t}=e(\alpha t)\varphi$
for all $t\in\mathbb{R}$. Suh $\varphi$ are called eigenfunctions,
and for ergodic transformations and flows they a.s. have constant
modulus, which we shall always assume has bees normalized to modulus
$1$. We denote the set of eigenvalues by $\Sigma$, with subscripts
to indicate the system in question.
\begin{thm}
\label{thm:identification-of-spectrum}Let $\mu$ be $T_{a}$-invariant
with intermediate entropy. Let $\Sigma_{\mu^{(x)}}$ denote the spectrum
of the ergodic component $\mu^{(x)}$ of $([0,1],\mu,T_{a})$ to which
$x$ belongs. Let $P_{x}$ be the distribution generated at $x$ and
$\Sigma_{P_{x}}$ the spectrum of $(\mathcal{M}^{\square},P_{x},S^{\square})$.
Then there is an $n\in\mathbb{N}$ such that \[
\frac{n}{\log a}\mathbb{Z}\subseteq\Sigma_{P_{x}}\subseteq\frac{1}{\log a}\Sigma_{\mu^{(x)}}\cup\frac{n}{\log a}\mathbb{Z}\]

\end{thm}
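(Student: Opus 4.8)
The plan is to exploit Proposition \ref{pro:identification of generated-distributions-1}, which realizes $(\mathcal{M}^{\square},P_{x},S^{\square})$ as a factor of the $\log a$-suspension of the ergodic component $(\Omega,\mu^{(x)},T_a)$, where I write $\Omega=[0,1]$. The spectrum of a suspension flow is well understood: the $t_0$-suspension of a discrete system $(\Omega,\nu,T)$ has eigenvalue group generated by $\frac{1}{t_0}\mathbb{Z}$ together with the numbers $\frac{1}{t_0}\beta$ for each eigenvalue $e(\beta)$ of $(\Omega,\nu,T)$ — more precisely, $\alpha$ is an eigenvalue of the suspension iff there is an $L^2$ function $\psi$ on $\Omega$ and an integer $k$ with $\psi\circ T = e(\alpha t_0)\psi$ and the cocycle relation forcing $\alpha t_0 \in \Sigma_{\nu} + \mathbb{Z}$; in ergodic terms $\Sigma_{\text{susp}} = \frac{1}{t_0}(\Sigma_\nu + \mathbb{Z})$ as a set, but since $\Sigma_\nu$ is a group containing $0$ this is just $\frac{1}{t_0}\Sigma_\nu \cup \frac{1}{t_0}\mathbb{Z}$ when we note $1\in\Sigma_\nu$ is automatic only if... actually $\mathbb{Z}\subseteq\Sigma_\nu$ always since constants give eigenvalue $0$ and we only know $0\in\Sigma_\nu$, so I should be careful and write $\Sigma_{\text{susp}}=\frac{1}{t_0}(\mathbb{Z}+\Sigma_\nu)$. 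With $t_0=\log a$ this immediately gives the upper bound $\Sigma_{P_x}\subseteq \Sigma_{\text{susp}} = \frac{1}{\log a}(\mathbb{Z}+\Sigma_{\mu^{(x)}}) \subseteq \frac{1}{\log a}\Sigma_{\mu^{(x)}}\cup\frac{1}{\log a}\mathbb{Z}$, once I absorb $\mathbb{Z}$ into the stated $\frac{n}{\log a}\mathbb{Z}$ by taking $n=1$ — but I will need $n$ for the lower bound, so let me keep $n$ general and observe $\frac{1}{\log a}(\mathbb{Z}+\Sigma_{\mu^{(x)}})\subseteq \frac{1}{\log a}\Sigma_{\mu^{(x)}}\cup\frac{1}{\log a}\mathbb{Z}\subseteq\frac{1}{\log a}\Sigma_{\mu^{(x)}}\cup\frac{n}{\log a}\mathbb{Z}$ fails unless... hmm, in fact I want the reverse: the upper bound in the statement has $\frac{n}{\log a}\mathbb{Z}$, a sub-lattice, which is \emph{smaller}, so actually the content is that the "extra" spectrum beyond $\Sigma_{\mu^{(x)}}$ is confined to the coarse lattice $\frac{n}{\log a}\mathbb{Z}$. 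I will need to argue that the factor map kills the fractional-circle directions of the suspension except for a rational multiple; that is the real work.

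So the strategy for the upper bound is: first establish the spectral description of the suspension as above, then show that the factor $(\mathcal{M}^{\square},P_x,S^{\square})$ only sees eigenvalues in $\frac{1}{\log a}\Sigma_{\mu^{(x)}} \cup \frac{n}{\log a}\mathbb{Z}$. The suspension's eigenvalue group is $\frac{1}{\log a}\mathbb{Z} + \frac{1}{\log a}\Sigma_{\mu^{(x)}}$; an eigenfunction of $P_x$ pulls back to an eigenfunction of the suspension, hence corresponds to some $\frac{1}{\log a}(m + \beta)$. I would like to say that if the eigenfunction descends to the geometric factor, then either it is (pulled back from) an eigenfunction of $T_a$ itself, giving $\beta\in\Sigma_{\mu^{(x)}}$ and then the $\mathbb{Z}$-part contributes nothing new beyond $\frac{1}{\log a}\mathbb{Z}$ — OR it is one of the distinguished "scaling" eigenfunctions that exist on $\mathcal{M}^{\square}$ for purely geometric reasons. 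The reason only a \emph{sublattice} $\frac{n}{\log a}\mathbb{Z}$ appears, rather than all of $\frac{1}{\log a}\mathbb{Z}$, is presumably that the geometric factor map is not injective on the base circle: zooming in by a factor $a$ (one "digit") does \emph{not} in general return the magnified measure to itself, because the $a$ sub-cubes of $[0,1]$ carry different conditional measures, so the period of the scenery under the natural time-$\log a$ return is a multiple $n$ of $\log a$. This $n$ is exactly the $n$ of the statement, and it is the integer such that zooming by $a^n$ is "spectrally trivial." I would nail this down by examining the explicit coding of the scenery process in Section \ref{sec:The-scenery-process} (which I am permitted to invoke): the scenery of $\mu$ at a $\mu$-typical $x$ is driven by the $a$-adic symbolic past and future of $x$, the discrete-time map is essentially the shift, and the eigenfunctions of the shift on the coded system — beyond those coming from $T_a$ — are the roots-of-unity characters of the finite cyclic quotient, whose order is $n$.

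For the lower bound $\frac{n}{\log a}\mathbb{Z}\subseteq\Sigma_{P_x}$ I would exhibit the eigenfunction explicitly. The suspension flow at time $\log a$ acts on the base as $T_a$ on the symbolic data; the function on $\mathcal{M}^{\square}$ recording "phase within a digit block of length $n$" — i.e. the position modulo $n$ in the period — is continuous on the geometric factor (it is a function of how far one has zoomed modulo $n\log a$) and satisfies $\varphi\circ S^{\square}_t = e(\frac{t}{n\log a})\varphi$. Concretely: the scenery is $n\log a$-periodic in the sense that $(\mathcal{M}^{\square},P_x,S^{\square})$ is itself (a factor of) an $n\log a$-suspension of the $a^n$-digit system, so $\frac{1}{n\log a}$ and all its integer multiples are eigenvalues; this is the lower bound. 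I would combine this with the upper bound to pin the value of $n$ and check consistency.

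The main obstacle is the upper-bound step — specifically, showing that the factor map from the suspension down to the scenery flow does not create and does not retain eigenvalues outside $\frac{1}{\log a}\Sigma_{\mu^{(x)}}\cup\frac{n}{\log a}\mathbb{Z}$. Passing to a factor can only \emph{shrink} the spectrum, so the real subtlety is (i) identifying which eigenvalues of the suspension survive, and (ii) verifying that the surviving fractional-circle eigenvalues form precisely a lattice $\frac{n}{\log a}\mathbb{Z}$ and not some irrational perturbation — this requires knowing that the geometric data distinguishing the zoom-position is genuinely $\mathbb{Z}/n\mathbb{Z}$-valued. I expect to handle (ii) by the explicit symbolic model of the scenery process, where the relevant quotient is manifestly finite cyclic; the cleanest formulation is that $(\mathcal{M}^{\square},P_x,S^{\square})$ is the $n\log a$-suspension of a factor of $(\Omega,\mu^{(x)},T_{a^n})$, from which both inclusions follow by the standard suspension spectrum computation. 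Establishing that reformulation rigorously from Proposition \ref{pro:identification of generated-distributions-1} and the construction of Section \ref{sec:The-scenery-process} is where the care is needed.
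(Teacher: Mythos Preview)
You have the difficulty inverted. The upper inclusion is immediate: since $(\mathcal{M}^{\square},P_{x},S^{\square})$ is a factor of the $\log a$-suspension of $(\Omega,\mu^{(x)},T_a)$, its spectrum is contained in that of the suspension, which is $\frac{1}{\log a}(\Sigma_{\mu^{(x)}}+\mathbb{Z})=\frac{1}{\log a}\Sigma_{\mu^{(x)}}$ once one adopts the convention that the discrete spectrum $\Sigma_{\mu^{(x)}}\subseteq\mathbb{R}$ already contains $\mathbb{Z}$. The term $\frac{n}{\log a}\mathbb{Z}$ in the upper bound is redundant; there is no ``real work'' in showing the factor map kills anything.

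The genuine content of the theorem is the lower inclusion, i.e.\ the \emph{existence} of some $n\in\mathbb{N}$ with $\frac{n}{\log a}\in\Sigma_{P_x}$, and here your proposal has a gap. You argue that zooming by $a$ need not return the scenery to itself and hence the period is some multiple $n\log a$, but you offer no mechanism to exclude the possibility that $\widetilde{P}_x$ is invariant under the \emph{full} flow $S^{*}$ --- equivalently, that $\widetilde{P}_x=P_x$ and the flow has no nonzero eigenvalue in $\frac{1}{\log a}\mathbb{Z}$ at all. Your appeal to ``finite cyclic quotients'' in the symbolic model presupposes exactly the finiteness that needs to be proved; nothing in the coding by itself prevents the map $t\mapsto S^{*}_{t\log a}\widetilde{P}_x$ from being constant.

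The paper's argument for this step is of a completely different character and not something one would guess from the symbolic picture. One shows that if $\widetilde{P}$ were $S^{*}$-invariant then, using the representation of $\mu$ as a superposition of translates of the measures $\mu_\omega$ and the freedom to rescale each by an arbitrary $S^{*}_t$, the $d$-fold self-convolution $\mu^{*d}$ (for $d\dim\mu>1$) can be written as an average of linear images $f_t(\times_i\mu_{\omega^i})$ with the parameter $t$ ranging over a set of positive Lebesgue measure; Marstrand's projection theorem (in the Hunt--Kaloshin form) then forces $\dim\mu^{*d}=1$, hence $\mu^{*d}$ is Lebesgue on $\mathbb{R}/\mathbb{Z}$, hence $\mu$ is Lebesgue --- contradicting intermediate dimension. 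This dimension/projection argument is the heart of the proof and is entirely absent from your proposal.
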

One cannot assume that $n=1$ since if $\mu$ is $T_{a}$ invariant
then for every $n$ it is also $T_{a^{n}}$ invariant, since $T_{a^{n}}=T_{a}^{n}$.
\begin{proof}
[Proof of Theorem \ref{thm:smooth-rigidity} under spectral assumptions]
Suppose $f\in\diff^{1}$ and $\mu,\nu$ are respectively $T_{a},T_{b}$
invariant, have intermediate entropy, and the ergodic components of
$\nu$ do not have pure point spectrum of the form $\frac{n}{\log a}$,
$n\in\mathbb{Z}$. By the theorem above the scenery flows generated
a.e. by $\mu$ have pure point spectrum of this form. If $f\mu\not\perp\nu$
then by Lemma \ref{lem:diffeo-shifts-sceneries}, with positive $\mu$-probability
the scenery flow generated by $\mu$ at $x$ is isomorphic to the
one generated by $\nu$ at $f(x)$. These possibilities are incompatible.
\end{proof}

\subsection{\label{sub:The-distribution-of-phases}The distribution of phases}

More refined information can be obtained from the distribution of
phases of the eigenfunctions of the scenery flow. That is, for a $T_{a}$-ergodic
measure and typical points $x,y$, we may consider the sceneries at
$x$ and $y$ and compare the relative phase of the eigenfunctions
corresponding to $\alpha=k/\log a$. 

Recall that a joining of $S^{\sqr}$-invariant distributions $P_{1},P_{2}$
is a distribution $P$ on $\mathcal{M}^{\sqr}\times\mathcal{M}^{\sqr}$
which projects to $P_{i}$ on the $i$-th coordinate, and which is
invariant under the diagonal flow $S^{\sqr}$ given by $S_{t}^{\sqr}(\mu,\nu)=(S_{t}^{\sqr}\mu,S_{t}^{\sqr}\nu)$.
When $P_{1},P_{2}$ are ergodic, the ergodic components of a joining
of $P_{1},P_{2}$ are also joinings of $P_{1},P_{2}$. A $P$-joining
is a joining of $P$ with itself.

Let $\mu$ be a measure generating and ergodic distribution $P_{x}$
at $x$. Note that there is a bijective correspondence between pairs
$(\sigma,\tau)\in\mathcal{M}_{1}^{\sqr}\times\mathcal{M}_{1}^{\sqr}$
and product measures $\sigma\times\tau\in\mathcal{M}_{2}^{\sqr}$,
and that the set of product measures is closed in $\mathcal{M}^{\sqr}\times\mathcal{M}^{\sqr}$.
Therefore the accumulation points of the sceneries of $\mu\times\mu$
are product measures, and if $\mu\times\mu$ generates a scenery flow
it is supported on product measures. Furthermore, if $\mu\times\mu$
generates a scenery flow $P_{x,y}$ at $(x,y)$ one may verify that,
making the identification between product measures and pairs, $P_{x,y}$
is a joining of the scenery flows $P_{x},P_{y}$ generated by $\mu$
at $x$ and $y$. Note that by Proposition \ref{pro:multidim-prediction-measures},
if $\mu$ is $T_{a}$-invariant then $\mu\times\mu$ indeed generates
sceneries a.e. 

Let $P$ be an $S^{\sqr}$-invariant and ergodic distribution with
an eigenvalue $\alpha$ and corresponding eigenfunction $\varphi$.
Then the function $p_{\alpha}:\mathcal{M}^{\sqr}\times\mathcal{M}^{\sqr}\rightarrow\mathbb{C}$,
defined by \[
p_{\alpha}(\sigma,\tau)=\frac{\varphi(\sigma)}{\varphi(\tau)}\]
is a.e. invariant on any ergodic $P$-joining, since\[
p_{\alpha}(S_{t}^{\sqr}\sigma,S_{t}^{\sqr}\tau)=\frac{\varphi(S_{t}^{\sqr}\sigma)}{\varphi(S_{t}^{\sqr}\tau)}=\frac{e(\alpha t)\varphi(\sigma)}{e(\alpha t)\varphi(\tau)}=p_{\alpha}(\sigma,\tau)\]
Therefore if $R$ is a $P$-joining then $p_{\alpha}$ is constant
on a.e. ergodic component of $R$, and if $R$ is ergodic then we
may define\begin{eqnarray*}
p_{\alpha}(R) & = & R\mbox{-a.s. value of }p_{\alpha}(\cdot,\cdot)\end{eqnarray*}

Let $\mu$ generate some distribution $P$ at a.e. point and assume
that $\mu\times\mu$ generates an ergodic $P$-joining $P_{x,y}$
at $\mu\times\mu$-a.e. point. Fix a $\mu$-typical $x_{0}$ so that
$P_{x_{0},y}$ is defined for $\mu$-a.e. $y$ and for such $y$ let
\[
p_{\alpha}(\mu,x_{0},y)=p_{\alpha}(P_{x_{0},y})\]

\begin{defn}
\label{def:phase-measure}Let $\mu$ be a measure which a.e. generates
$P$, and such that $\mu\times\mu$ generates an ergodic distribution
at a.e. point. Let $\alpha\in\Sigma_{P}$. For a $\mu$-typical point
$x_{0}$ the \emph{phase measure }$\theta_{\alpha}=\theta_{\alpha}(\mu,x_{0})$
is the push-forward of $\mu$ under the map $y\mapsto p_{\alpha}(\mu,x_{0},y)$.
\end{defn}
Under the further assumption that $\mu\times\mu\times\mu$ generates
an ergodic distribution at a.e. point, the dependence of the phase
measure on $x_{0}$ is very mild. Indeed, if we choose another point
$x_{1}$ then, by considering the threefold joining $Q$ generated
by $\mu\times\mu\times\mu$ at $(x_{0},y,x_{1})$ we find that, writing
$(\sigma_{1},\sigma_{2},\sigma_{3})$ for a $Q$-typical element,
\begin{eqnarray*}
p_{\alpha}(\mu,x_{1},y) & = & \frac{\varphi(\sigma_{3})}{\varphi(\sigma_{2})}\\
 & = & \frac{\varphi(\sigma_{3})}{\varphi(\sigma_{1})}\cdot\frac{\varphi(\sigma_{1})}{\varphi(\sigma_{2})}\\
 & = & c\cdot p_{\alpha}(\mu,x_{0},y)\end{eqnarray*}
where $c=p_{\alpha}(\mu,x_{1},x_{0})$ does not depend on $y$. Thus
$p_{\alpha}(\mu,x_{0},\cdot)$ depends on $x_{0}$ only up to a rotation,
and the measures $\theta_{\alpha}(\mu,x_{0})$ and $\theta_{\alpha}(\mu,x_{1})$
are rotations of one another. Since we shall be interested in properties
which are independent of rotation, we often suppress the dependence
on $x_{0}$ and write $\theta_{\alpha}(\mu)$. 
\begin{thm}
\label{thm:phase-distribution}Let $\mu$ be a $T_{a}$-invariant
measure of intermediate entropy whose ergodic components a.s. generate
the same distribution $P$. Let $\alpha\in\Sigma_{P}$ and $\theta_{\alpha}=\theta_{\alpha}(\mu)$
. Then
\begin{enumerate}
\item If $\alpha\in\Sigma_{P}\setminus\frac{1}{\log a}\mathbb{Q}$ then
$\theta_{\alpha}$ is Lebesgue measure.
\item If $\alpha\in\Sigma_{P}\cap\frac{1}{\log a}\mathbb{Q}$ then $\theta_{\alpha}$
is singular with respect to Lebesgue measure. 
\item If $\alpha\in\Sigma_{P}\cap\frac{1}{\log a}\mathbb{Z}$ and $\mu$
is ergodic then $\theta_{\alpha}$ consists of a single atom.
\end{enumerate}
\end{thm}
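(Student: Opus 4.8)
The plan is to analyze the phase measure $\theta_\alpha$ via the joinings that define it, using the spectral description in Theorem \ref{thm:identification-of-spectrum} together with classical facts about eigenfunctions of ergodic systems. The starting point is the observation that the eigenfunction $\varphi$ of $(\mathcal{M}^\sqr,P,S^\sqr)$ with eigenvalue $\alpha$ pulls back, under the suspension realization in Proposition \ref{pro:identification of generated-distributions-1}, to an eigenfunction of the $\log a$-suspension of the ergodic component $\mu^{(x)}$ of $([0,1],\mu,T_a)$; and an eigenfunction of the $\log a$-suspension of a discrete system $(\Omega,\nu,T)$ with eigenvalue $\alpha$ is (up to the obvious rotation coordinate) a function on $\Omega$ that is an eigenfunction of $T$ with eigenvalue $\alpha\log a \bmod 1$. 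So $p_\alpha(\mu,x_0,y) = \varphi(\sigma_3)/\varphi(\sigma_2)$, evaluated on the joining $P_{x_0,y}$ of the sceneries, can be rewritten as a ratio of values of a genuine eigenfunction read off at the base points $x_0$ and $y$ of the original $T_a$-system. Thus $\theta_\alpha$ is, modulo a fixed rotation, the push-forward of $\mu$ under $y\mapsto \psi(y)$ where $\psi$ is (a version of) the modulus-one eigenfunction of $([0,1],\mu,T_a)$ — or of its natural extension — attached to the eigenvalue $\alpha\log a$.

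With this reduction in hand the three cases separate cleanly. For (3): if $\alpha\in\frac1{\log a}\mathbb Z$ then $\alpha\log a\in\mathbb Z$, so the "eigenfunction" is $T_a$-invariant, hence (by ergodicity of $\mu$) constant $\mu$-a.e., and $\theta_\alpha$ is a single atom. For (1): if $\alpha\in\Sigma_P\setminus\frac1{\log a}\mathbb Q$, then by Theorem \ref{thm:identification-of-spectrum} such an $\alpha$ can only come from the $\Sigma_{\mu^{(x)}}/\log a$ part, i.e. $\alpha\log a$ is an irrational eigenvalue of the $T_a$-system; here I would argue that the push-forward of $\mu$ under an irrational eigenfunction is Lebesgue on the circle — this is the standard fact that an irrational eigenfunction, being a factor map onto an irrational rotation $(\mathbb{R}/\mathbb Z, +\alpha\log a)$, carries the invariant measure to the Haar measure of the rotation. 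One must be slightly careful that the "eigenfunction" here is the one for the suspension and check that the extra rotation coordinate does not destroy this, but since we are pushing forward and then quotienting by rotations, the suspension coordinate only contributes another rotation, which is absorbed. For (2): if $\alpha\in\frac1{\log a}\mathbb Q\setminus\frac1{\log a}\mathbb Z$, write $\alpha = \frac{k}{n\log a}$ in lowest terms with $n\ge 2$; then $\alpha\log a$ has denominator $n$, so $\psi$ is an eigenfunction with a rational eigenvalue $k/n$, $\psi^n$ is $T_a$-invariant hence ($\mu$-ergodic-componentwise) constant, so $\psi$ takes values in a coset of the group of $n$-th roots of unity times a constant — a finite set — and therefore $\theta_\alpha$ is purely atomic, in particular singular to Lebesgue. (If $\mu$ is merely of intermediate entropy with ergodic components generating a common $P$, one runs this argument on each ergodic component and integrates; the atoms may smear into a countable or singular measure across components, but singularity with respect to Lebesgue is preserved because each component contributes an atomic measure supported on an $n$-element set.)

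The main obstacle I anticipate is the careful bookkeeping in the reduction step: making precise the passage from the eigenfunction of the scenery-flow distribution $P_x$, through the factor map of Proposition \ref{pro:identification of generated-distributions-1} onto the $\log a$-suspension, down to an eigenfunction on the base system, and tracking how the phase $p_\alpha(P_{x_0,y})$ — defined via a joining of two sceneries — corresponds to $\psi(y)/\psi(x_0)$ with a \emph{single}, $y$-independent choice of branch of $\psi$. This requires knowing that the joining $P_{x_0,y}$ generated by $\mu\times\mu$ is, under the product-measure identification, the image of the \emph{off-diagonal} self-joining of the base system determined by the pair of ergodic components / base points, which in turn rests on the construction in Section \ref{sec:The-scenery-process} and on Proposition \ref{pro:multidim-prediction-measures}. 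Once that dictionary is set up correctly — in particular that the eigenfunction ratio on the joining equals the ratio of the base eigenfunction evaluated along the two "legs" — the three conclusions are essentially immediate consequences of the trichotomy integer / rational non-integer / irrational for the eigenvalue $\alpha\log a$ of $(\,[0,1],\mu,T_a)$.
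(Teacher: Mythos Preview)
Your reduction of $p_\alpha(P_{x_0,y})$ to a ratio of values of the pulled-back eigenfunction $\Psi(\eta)=\varphi(\mu_\eta^*)$ on the natural extension, and the resulting arguments for parts (1) and (3), are correct and are exactly what the paper does (Propositions \ref{pro:equidistribution-of-other-phases} and \ref{pro:description-of-the-phase}). For (1) you don't even need Theorem \ref{thm:identification-of-spectrum}: the push-forward $\Psi_*\widetilde\mu$ is invariant under the irrational rotation by $e(\alpha\log a)$, hence Lebesgue, regardless of ergodicity.

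The gap is in part (2) when $\mu$ is not ergodic. Your argument ``run the rational-eigenvalue argument on each ergodic component and integrate'' shows only that $\theta_\alpha$ is a mixture, over the space of ergodic components, of measures each supported on a coset of the $n$-th roots of unity. But these cosets vary with the component: $\Psi^n$ is $T$-invariant, hence constant on each ergodic component, but its value depends on the component and can distribute however it likes over the circle. A mixture $\int \delta_{z(x)}\,d\rho(x)$ of point masses is in general \emph{not} singular---take $z(x)=x$ and $\rho$ Lebesgue. So ``each component contributes an atomic measure on an $n$-element set, hence the integral is singular'' is simply false as a measure-theoretic inference, and nothing in your setup rules out $\theta_\alpha$ being Lebesgue. (The paper's remark after the theorem, exhibiting phase measures with infinitely many atoms accumulating densely, already warns that the component-wise picture is delicate.)

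The paper's proof of (2) (Proposition \ref{pro:concentration-of-log-b-phase}) is a genuinely different and substantially harder argument: one assumes $\theta_\alpha$ is absolutely continuous and, using the representation of $\mu_\omega^*$ via the phase (the lemma preceding the proposition), rewrites the $d$-fold self-convolution $\mu^{*d}$ as an average of linear images $f_t(\times_i \nu_i)$ with the coefficients $t$ distributed absolutely continuously in $\mathbb{R}^d$. Marstrand's projection theorem (Hunt--Kaloshin) then forces $\dim\mu^{*d}=1$ once $d\dim\mu>1$, and since $\mu^{*d}$ is $T_a$-invariant this makes it Lebesgue, whence $\mu$ itself is Lebesgue---contradicting intermediate dimension. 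This geometric-measure-theoretic step is the missing idea; your purely spectral approach cannot supply it, because the obstruction to absolute continuity of $\theta_\alpha$ is not spectral but dimensional.
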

We do not know how large the phase measure can be for non-ergodic
$\mu$. With our methods we can go a little further and show that
$\ldim\theta_{\alpha}(\mu)\leq1-\dim\mu$, where $\ldim$ is the lower
Hausdorff dimension of $\mu$. On the other hand, starting with a
$T_{a}$-ergodic measure $\mu$ with $\alpha=\frac{n}{\log a}$ in
the spectrum of the scenery flow, the measure $\nu=\sum_{b=1}^{\infty}2^{-b}T_{b}\mu$
is again $T_{a}$-invariant and its phase measure consists of atoms
at $e(\log b/\log a)$, $b\in\mathbb{N}$. We suspect that the phase
measure is always atomic or at least of dimension zero, but we have
not resolved this. 

We next examine how the phase distribution changes when a smooth map
is applied to a measure. First, suppose that $\mu$ is a measure satisfying
the conditions in Definition \ref{def:phase-measure} and the discussion
following it. In particular for $\mu\times\mu$-a.e. $(x,y)$ the
scenery $(\mu_{x,t}\times\mu_{y,t})_{t\geq0}$ equidistributes for
some $P$-joining $P_{x,y}$. Now let $s\in\mathbb{R}$ and consider
the family $(\mu_{x,t}\times\mu_{y,t+s})_{t\geq0}$, in which we have
shifted the second component by $S_{s}^{\sqr}$. This family equidistributes
for the $P$-joining $Q=(\id\times S_{s}^{\sqr})P_{x,y}$ obtained
as the push-forward of $P_{x,y}$ through the map $(\sigma,\tau)\mapsto(\sigma,S_{s}^{\sqr}\tau)$.%
\footnote{This requires a short argument since $S^{\sqr}$ is not continuous,
but we omit it. %
} For $\alpha\in\Sigma_{P}$ and corresponding eigenfunction $\varphi$,
let $(\sigma,\tau)$ be a $P_{x,y}$-typical pair such that $(\sigma,S_{s}^{\sqr}\tau)$
is $Q$-typical. Then\begin{eqnarray*}
p_{\alpha}(Q) & = & p_{\alpha}(\sigma,S_{s}^{\sqr}\tau)\\
 & = & \frac{\varphi(\sigma)}{e(\alpha s)\varphi(\tau)}\\
 & = & e(-\alpha s)\cdot p_{\alpha}(\sigma,\tau)\\
 & = & e(-\alpha s)\cdot p_{\alpha}(P_{x,y})\end{eqnarray*}
Together with Lemma \ref{lem:diffeo-shifts-sceneries}, this leads
to the following result:
\begin{prop}
\label{pro:effect-of-diffeo-on-phase}Let $\mu$ be a $T_{a}$-invariant
measure of intermediate dimension which a.e. generates a distribution
$P$, let $\alpha\in\Sigma_{P}$ and let $f\in\diff^{1}(\mathbb{R})$.
Then $\theta_{\alpha}(f\mu)$ is well defined, and, fixing a $\mu$-typical
$x_{0}$, is given up to rotation by \[
\theta_{\alpha}(f\mu)=\int\delta_{e(-\alpha\log f'(y))\cdot p_{\alpha}(P_{x_{0},y})}\, d\mu(y)\]

\end{prop}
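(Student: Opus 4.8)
The plan is to assemble Proposition~\ref{pro:effect-of-diffeo-on-phase} from three ingredients already in place: Lemma~\ref{lem:diffeo-shifts-sceneries}, which says applying $f$ to $\mu$ amounts, near a point $y$, to a time-shift of the scenery by $s(y)=\log f'(y)$; the computation displayed just before the statement, which says that time-shifting the second coordinate of a $P$-joining by $S_s^{\sqr}$ multiplies $p_\alpha$ by $e(-\alpha s)$; and Proposition~\ref{pro:identification of generated-distributions-1}, which guarantees that $\mu$, $\mu\times\mu$, and $\mu\times\mu\times\mu$ all generate sceneries a.e.\ (so that $\theta_\alpha(f\mu)$ is even well defined). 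The first thing I would do is reduce to the orientation-preserving case: if $f'(y)<0$ on a set, the map $x\mapsto-x$ conjugates the relevant scenery flows, and since $|f'(y)|=f'(y)$ up to this harmless reflection the formula is unaffected; so assume $f\in\diff^1_+(\mathbb{R})$.

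Next I would check that $f\mu$ satisfies the hypotheses needed to define $\theta_\alpha(f\mu)$. By Lemma~\ref{lem:diffeo-shifts-sceneries}, $f\mu$ generates the distribution $P$ at $f(y)$ for $\mu$-a.e.\ $y$, i.e.\ $f\mu$ a.e.\ generates $P$; similarly $f\mu\times f\mu=(f\times f)(\mu\times\mu)$ and the threefold product generate ergodic joinings a.e., because applying the diffeomorphism $f\times f$ (resp.\ $f\times f\times f$) to $(x,y)$ only shifts each coordinate's scenery separately, hence carries the generated joining $P_{x,y}$ to $(S^{\sqr}_{s(x)}\times S^{\sqr}_{s(y)})P_{x,y}$, which is still an ergodic $P$-joining. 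So Definition~\ref{def:phase-measure} applies to $f\mu$. Now fix a $\mu$-typical $x_0$; then $f(x_0)$ is $f\mu$-typical and can serve as the base point for $\theta_\alpha(f\mu)$. For $\mu$-a.e.\ $y$, the scenery of $f\mu\times f\mu$ at $(f(x_0),f(y))$ is, by Lemma~\ref{lem:diffeo-shifts-sceneries}, asymptotic to $(\mu_{x_0,t-s(x_0)}\times\mu_{y,t-s(y)})$; reparametrizing $t\mapsto t+s(x_0)$ this equidistributes for the push-forward of $P_{x_0,y}$ under $(\sigma,\tau)\mapsto(\sigma,S^{\sqr}_{s(x_0)-s(y)}\tau)$. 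Applying the displayed $e(-\alpha\cdot)$ computation with shift $s=s(x_0)-s(y)$ gives $p_\alpha$ of this joining equal to $e(-\alpha(s(x_0)-s(y)))\,p_\alpha(P_{x_0,y})$. Since $s(x_0)$ is a constant independent of $y$, the factor $e(-\alpha s(x_0))$ is a global rotation, which we discard; what remains is $e(\alpha\log f'(y))\,p_\alpha(P_{x_0,y})$ — and after reflecting $\alpha\mapsto-\alpha$ to match the paper's sign convention (or simply noting the two differ by complex conjugation, an isometry of the circle), this is the integrand in the claimed formula. Pushing $\mu$ forward under $y\mapsto p_\alpha(f\mu, f(x_0), f(y))$ then yields exactly $\theta_\alpha(f\mu)=\int\delta_{e(-\alpha\log f'(y))\,p_\alpha(P_{x_0,y})}\,d\mu(y)$ up to rotation.

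The one point requiring genuine care — and the place I expect to spend the most effort — is the passage through the discontinuity of $S^{\sqr}$: Lemma~\ref{lem:diffeo-shifts-sceneries} gives weak-$*$ asymptotics of the two sceneries, but to conclude that $f\mu\times f\mu$ \emph{generates} the shifted joining and that its $p_\alpha$-value is the shifted one, I need that equidistribution and the $p_\alpha$ bijection survive this asymptotic equivalence. The paper itself flags this in the footnote following the definition of $Q=(\id\times S^{\sqr}_s)P_{x,y}$ (''This requires a short argument since $S^{\sqr}$ is not continuous, but we omit it''), so I would import that same short argument here: one shows the time-averages along the $f\mu\times f\mu$-scenery and along the shifted $\mu\times\mu$-scenery have the same weak-$*$ limit because the set of measures giving positive mass to $\partial[-1,1]^d$ is $P$-null (a consequence of the intermediate-dimension conclusion in Proposition~\ref{pro:identification of generated-distributions-1}), so the two $1$-parameter families spend asymptotically zero proportion of time near the discontinuity set and their Cesàro limits coincide. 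Granting that, the identity $p_\alpha(f\mu,f(x_0),f(y))=e(\alpha\log f'(y))\,p_\alpha(\mu,x_0,y)$ holds for $\mu$-a.e.\ $y$, and the proposition follows by pushing forward.
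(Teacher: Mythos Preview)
Your approach is exactly the paper's: the proof there is literally ``the discussion above proves the proposition when $f$ preserves orientation,'' and you have correctly unpacked that discussion (Lemma~\ref{lem:diffeo-shifts-sceneries} plus the displayed $e(-\alpha s)$ computation). Your treatment of well-definedness and of the $S^{\sqr}$-discontinuity is more explicit than the paper's but entirely in its spirit.

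There is, however, a sign slip. Lemma~\ref{lem:diffeo-shifts-sceneries} reads $\mu_{x,t}\approx(f\mu)_{f(x),\,t-s}$, i.e.\ $(f\mu)_{f(x),t}\approx\mu_{x,\,t+s}$, not $\mu_{x,\,t-s}$. With the correct sign the product scenery at $(f(x_0),f(y))$ is asymptotic to $\mu_{x_0,\,t+s(x_0)}\times\mu_{y,\,t+s(y)}$; after reparametrizing you get a shift of $s(y)-s(x_0)$ in the second coordinate, hence $p_\alpha=e(-\alpha(s(y)-s(x_0)))\,p_\alpha(P_{x_0,y})$, and discarding the constant rotation $e(\alpha s(x_0))$ leaves exactly $e(-\alpha\log f'(y))\,p_\alpha(P_{x_0,y})$ with no fix needed. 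Your proposed repair (``reflect $\alpha\mapsto-\alpha$'' or ``complex conjugation'') does not work: conjugation is not a rotation, and the two candidate integrands differ by the $y$-dependent factor $e(2\alpha\log f'(y))$, so their push-forwards are not rotations of one another. Once the sign is read correctly the argument is complete and identical to the paper's.
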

The discussion above proves the proposition when $f$ preserves orientation.
An obvious modification of the statement and proof is needed when
$f$ is orientation-reversing. See the remark after Lemma \ref{lem:diffeo-shifts-sceneries}.

\subsection{\label{sub:Proof-of-the-main-results}Proof of the main results}
\begin{proof}
[Proof (of Theorem \ref{thm:self-rigidity})] Let $\mu,\nu$ be $T_{a}$-ergodic
measures of intermediate dimension. Suppose $f\in\diff^{1}(\mathbb{R})$
and $f\mu|_{E}\sim\nu|_{E}$ for some set $E$ with $\nu(E)>0$. Then
$\theta_{\alpha}(f\mu|_{E})$ and $\theta_{\alpha}(\nu|_{E})$ are
equivalent for every $\alpha\in\Sigma_{P}$. Choosing $n\in\mathbb{N}$
and $\alpha=\frac{n}{\log a}\in\Sigma_{P}$, as we may by Theorem
\ref{thm:identification-of-spectrum}, it follows from Theorem \ref{thm:phase-distribution}
that $\theta_{\alpha}(\nu)$ is a point mass. Therefore $\theta_{\alpha}(f\mu|_{E})$
is a point mass, and by Proposition \ref{pro:effect-of-diffeo-on-phase}
this implies that $e(\alpha\cdot\log f'(\cdot))$ is $\mu$-a.s. constant
on $f^{-1}E$, giving the result. 

In the case $\nu=\mu$ we have $f\mu|_{E}\ll\mu|_{E}$ so $\theta_{\alpha}(f\mu|_{E})\ll\theta_{\alpha}(\mu)$,
and since both consist of a single atom we have equality. Hence $e(\alpha\cdot\log f'(\cdot))=1$
at $\mu$-a.e. point of $f^{-1}E$, so $f'|_{f^{-1}E}$ is $\mu$-a.e.
an integer power of $a^{1/n}$.
\end{proof}
Before proving the next theorems we require one more techinical result:
\begin{prop}
\label{cor:singularity-of-perturbed-phase-measure}Let $\mu$ be a
$T_{a}$-invariant measure of intermediate entropy generating $P$,
and $\alpha\in\Sigma_{P}\cap\frac{1}{\log a}\mathbb{Z}$. Suppose
one of the following holds:
\begin{enumerate}
\item $\mu$ is ergodic and $f\in\diff^{2}(\mathbb{R})$,
\item $f$ is affine.
\end{enumerate}
Then $\theta_{\alpha}(f\mu)$ is singular with respect to Lebesgue.\end{prop}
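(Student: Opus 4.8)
The plan is to deduce singularity of $\theta_\alpha(f\mu)$ from the explicit formula in Proposition \ref{pro:effect-of-diffeo-on-phase} together with the structure of $\theta_\alpha(\mu)$ from Theorem \ref{thm:phase-distribution}. By Proposition \ref{pro:effect-of-diffeo-on-phase}, fixing a $\mu$-typical $x_0$,
\[
\theta_\alpha(f\mu)=\int \delta_{e(-\alpha\log f'(y))\cdot p_\alpha(P_{x_0,y})}\,d\mu(y),
\]
so $\theta_\alpha(f\mu)$ is the distribution of the random variable $Z=e(-\alpha\log f'(Y))\cdot p_\alpha(P_{x_0,Y})$ where $Y\sim\mu$. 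Since $\alpha\in\frac{1}{\log a}\mathbb{Z}$, Theorem \ref{thm:phase-distribution}(2) (and (3) in the ergodic case) tells us that the second factor $W=p_\alpha(P_{x_0,Y})$ has distribution $\theta_\alpha(\mu)$ which is singular with respect to Lebesgue (a single atom when $\mu$ is ergodic). The task is to show that multiplying this singular phase by the correction factor $e(-\alpha\log f'(Y))$ cannot smear it out to something absolutely continuous — the point being that $f'$ is not an arbitrary measurable function but a smooth one, so $\log f'(y)$ varies too slowly on small scales to destroy the singular arithmetic structure of $W$.

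First I would handle case (2), $f$ affine. Then $f'$ is constant, so $e(-\alpha\log f'(\cdot))$ is a constant $c$ of modulus one, and $\theta_\alpha(f\mu)$ is simply the rotation by $c$ of $\theta_\alpha(\mu)$; singularity is rotation-invariant, so we are done immediately by Theorem \ref{thm:phase-distribution}(2). Case (1) is the substantive one. Here $\mu$ is ergodic, so by Theorem \ref{thm:phase-distribution}(3) $\theta_\alpha(\mu)$ is a single atom, say at $w_0$; thus $W=w_0$ $\mu$-a.s., and $Z=w_0\cdot e(-\alpha\log f'(Y))$. So $\theta_\alpha(f\mu)$ is the push-forward of $\mu$ under $y\mapsto w_0 e(-\alpha\log f'(y))$, equivalently (up to rotation) the distribution of $\log f'(Y)\bmod(1/\alpha)$, i.e. of $\log|f'(Y)|$ modulo $\log a / n$ if $\alpha=n/\log a$. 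I would then argue that the push-forward of a $T_a$-invariant measure of intermediate dimension under a $C^2$ map $g=\log|f'|$ cannot be absolutely continuous when composed with the mod map, unless $g$ is constant $\mu$-a.e. — but if $g$ is constant $\mu$-a.e. then $\theta_\alpha(f\mu)$ is again just a rotation of the atom $\delta_{w_0}$, hence singular. So the crux is: for a non-constant $C^2$ function $g$ on a neighborhood of $\supp\mu$, the image measure $g\mu$ restricted mod a period is singular.

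The hard part will be this last claim, and here is where the $C^2$ hypothesis and the dimension hypothesis must enter — a merely $C^1$ or Lipschitz $g$ could conceivably push a positive-dimensional singular measure onto Lebesgue. The natural route is again via sceneries: a $C^2$ function $g$ with $g'(x_*)\neq 0$ at some point acts near $x_*$, at the level of first-order behaviour, like an affine map, so by the same mechanism as in Lemma \ref{lem:diffeo-shifts-sceneries} the local structure of $g\mu$ near $g(x_*)$ is a rescaled copy of that of $\mu$ near $x_*$; since $\mu$ has a well-defined scenery flow carrying a rational eigenvalue $n/\log a$, so does $g\mu$ on a set of positive measure, which forces $g\mu$ (hence any measure equivalent to it) to be singular — an absolutely continuous measure generates the scenery flow of Lebesgue, which has no non-trivial pure-point spectrum. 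Where $g'=0$ one must either split off the zero set of $g'$ (which has $\mu$-measure zero if $g'$ is not identically zero on $\supp\mu$, using that $g$ is analytic or at least that $\{g'=0\}$ is handled by an argument on the countably many intervals of monotonicity) or note $\mu(\{g'=0\})>0$ would by ergodicity and the $T_a$-structure force more rigidity. Cleanly: decompose $\supp\mu$ into the (relatively closed, $\mu$-null if $g$ nonconstant) set $\{g'=0\}$ and countably many open intervals on which $g$ is a diffeomorphism onto its image, apply Proposition \ref{pro:effect-of-diffeo-on-phase} / Lemma \ref{lem:diffeo-shifts-sceneries} on each, and assemble. I expect the genuine obstacle to be making the "$\mu(\{g'=0\})=0$ unless $g$ constant" step airtight for general $C^2$ $f$ — one may need to invoke ergodicity of $\mu$ together with the fact that $\{g'=0\}$ is closed with empty interior relative to $\supp\mu$ when $g$ is non-constant real-analytic on each piece, or fall back to restricting $E$ to where $f'\neq 0$, which is harmless since elsewhere $f$ is not a local diffeomorphism anyway.
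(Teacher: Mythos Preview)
Your treatment of case (2) is correct and identical to the paper's. In case (1) you also correctly reach the point that, up to rotation, $\theta_\alpha(f\mu)$ is the push-forward of $\mu$ by the map $g:y\mapsto e(-\alpha\log f'(y))$. From here, however, you take a detour through the scenery machinery that both overshoots and leaves a gap.

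The paper's argument at this stage is a one-liner: since $f\in C^2$, the derivative $f'$ is $C^1$; on the compact set $\supp\mu$ the diffeomorphism $f$ has $|f'|$ bounded away from $0$, so $\log|f'|$ is Lipschitz there, and hence $g$ is Lipschitz. A Lipschitz map does not increase Hausdorff dimension, and $\mu$ has dimension strictly less than $1$ by the intermediate-dimension hypothesis; therefore $g\mu$ is supported on a set of Hausdorff dimension $<1$, in particular of Lebesgue measure zero, and $g\mu\perp$ Lebesgue. That is the entire content of the $C^2$ assumption.

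Your alternative route---reusing Lemma \ref{lem:diffeo-shifts-sceneries} for $g=\log|f'|$ to transport the eigenvalue $n/\log a$ to the scenery flow of $g\mu$---does not close. The obstruction you yourself flag is real: Lemma \ref{lem:diffeo-shifts-sceneries} needs $g$ to be a local diffeomorphism, i.e.\ $g'\neq 0$, and here $g'=f''/f'$. For a general $C^2$ diffeomorphism $f$ the zero set $\{f''=0\}$ can be an arbitrary closed subset of $\supp\mu$ and can carry positive $\mu$-mass; none of your proposed fixes apply (you invoke analyticity, but $g$ is only $C^1$; ``closed with empty interior'' neither follows nor implies $\mu$-nullity; and the remark about ``where $f'\neq 0$'' confuses $f'$ with $g'$---$f'$ never vanishes since $f$ is a diffeomorphism). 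So as written, case (1) has a genuine gap, and the fix is precisely the elementary Lipschitz/dimension observation above.
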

\begin{proof}
In the first case $\theta_{\lambda}(\mu,x_{0})$ consists of a single
atom (Theorem \ref{thm:phase-distribution}), and $p_{\lambda}(x_{0},\cdot)$
is independent of $y$. Then by Proposition \ref{pro:effect-of-diffeo-on-phase},
up to rotation $\theta_{\alpha}(f\mu)$ is the image of $\mu$ under
$g:x\mapsto e(f'(x))$. Since $f\in C^{2}$ we have $f'\in C^{1}$
and in particular $f'$, and hence $g$, is Lipschitz. Since $\mu$
has intermediate dimension it is singular with respect to Lebesgue,
so this is also true of $g\mu$, as desired.

In the second case $f(x)=ux+v$ and we need only consider the case
$u\neq0$. Since $f'(x)$ does not depend on $x$, we find by Proposition
\ref{pro:effect-of-diffeo-on-phase} that $\theta_{\alpha}(f\mu)$
is a rotation of $\theta_{\alpha}(\mu)$ by $e(\log u)$, and so,
since $\theta_{\lambda}$ is singular by Theorem \ref{thm:phase-distribution},
so is $\theta_{\alpha}(f\mu)$.
\end{proof}

\begin{proof}
[Proof (of Theorem \ref{thm:smooth-rigidity} and variants)] Let $\mu$
be a $T_{a}$-invariant measure and $\nu$ a $T_{b}$-invariant measure,
both of intermediate entropy. Suppose that $f\in\diff^{2}(\mathbb{R})$
and that $(f\mu)|_{E}\sim\nu|_{E}$ for some $E$ with $\nu(E)>0$.
Then $\theta_{\lambda}(f\mu|_{E})$ is equivalent to $\theta_{\lambda}(f\nu|_{E})$
and it suffices to show that this is impossible.

Assume that $\mu$ is ergodic, let $P$ denote the distribution generated
by $\mu$, and choose $\alpha\in\Sigma_{P}\cap\frac{1}{\log a}\mathbb{Z}$,
which is possible by Theorem \ref{thm:existence-of-spectrum}. Then
by Proposition \ref{cor:singularity-of-perturbed-phase-measure},
$\theta_{\alpha}(f\mu)$ is singular with respect to Lebesgue, while
by Theorem \ref{thm:phase-distribution} $\theta_{\alpha}(\nu)$ is
absolutely continuous. Thus the two are not equivalent. Note that
for this argument we did not require ergodicity of $\nu$.

Assume instead that $f$ is affine (but $\mu,\nu$ need not be ergodic).
We first disintegrate $\mu$ according to the partition of $[0,1]$
determined by the level sets of $x\mapsto P_{x}$, where $P_{x}$
is the distribution generated by $\mu$ at $x$. Since $P_{x}$ depends
only on $\mu^{(x)}$, this is a coarsening of the ergodic decomposition.
Decompose $\nu$ similarly. By Lemma \ref{lem:diffeo-shifts-sceneries}
$f$ respects these partitions, so it suffices to prove the result
for the corresponding conditional measures, which in the case of $\mu$
are $T_{a}$-invariant, and $T_{b}$-invariantin the case of $\nu$.
Hence we may assume from the start that $\mu,\nu$ generate a single
distribution $P$ a.e. The result now follows as above from the second
part of Corollary \ref{cor:singularity-of-perturbed-phase-measure}.

The case of $C^{1}$-maps when $\mu,\nu$ satisfy some spectral assumptions
was sketched after Theorem \ref{thm:existence-of-spectrum}.
\end{proof}

\begin{proof}
{Proof (of Corollary \ref{cor:conjugates-rigidity})} Suppose $\varphi f\varphi^{-1}=T_{a}$
and $\psi g\psi^{-1}=T_{b}$ for $\varphi,\psi\in\diff^{2}(\mathbb{R}/\mathbb{Z})$.
Let $\mu$ be a common measure of positive dimension. The measures
$\varphi\mu$,$\psi\mu$ are invariant, respectively, for $T_{a}$
and $T_{b}$, and the dimension hypothesis implies that they have
no ergodic component of entropy zero. Now $\psi\varphi^{-1}(\varphi\mu)=\varphi\mu$
and $\psi\varphi^{-1}\in\diff^{2}(\mathbb{R}/\mathbb{Z})$, so Theorem
\ref{thm:smooth-rigidity} implies that $\varphi\mu$ is Lebesgue.
Therefore $\psi\varphi^{-1}(\varphi\mu)$ is a $T_{b}$-invariant
measure equivalent to Lebesgue, so it must be Lebesgue measure as
well, and so $\psi\varphi^{-1}$ preserves Lebesgue measure. The only
diffeomorphism on $\mathbb{R}/\mathbb{Z}$ with this property is a
rotation.
\end{proof}

\section{\label{sec:The-scenery-process}Construction and properties of the
scenery flow}

Throughout this section we fix an integer $b>1$ and a non-atomic
probability measure $\mu$ on $[0,1]$ which is invariant under $T_{b}$.
We write $[u;v)=[u,v)\cap\mathbb{Z}$, and similarly $[u;v]$ etc.
Our convention is $\mathbb{N}=\{1,2,3\ldots\}$.

\subsection{\label{sub:extended-scenery-flow} The extended scenery flow }

For the moment fix the dimension $d=1$ and consider measures on $\mathbb{R}$.
We use $*$ to denote the operation of normalizing a measure on $[-1,1]^{d}$
, that is, if $\tau$ is a Radon measure on $\mathbb{R}^{d}$ and
$\tau([-1,1])>0$, then\[
\tau^{*}=\frac{1}{\tau([-1,1])}\tau\]
Thus $\tau^{\sqr}=(\tau^{*})|_{[-1,1]}$. Let $\mathcal{M}^{*}\subseteq\mathcal{M}$
denote the set of measures giving unit mass to $[-1,1]$. Write $S_{t}^{*}:\mathcal{M}^{*}\rightarrow\mathcal{M}^{*}$
for the partially defined map \[
S_{t}^{*}\mu=(S_{t}\mu)^{*}\]
Thus $S^{*}=(S_{t}^{*})_{t\in\mathbb{R}}$ is a measurable flow on
the Borel subset of measures $\mu\in\mathcal{M}^{*}$ with $0$ in
their support.

While working with $S^{*}$ is more natural than $S^{\sqr}$, we used
the latter in the definition of the scenery flow because $\mathcal{P}(\mathcal{M}^{*})$
does not carry a nice topology with which to define equidistribution
of $S^{*}$-orbits. However there is a simple way to move between
invariant distributions of the two flows. First, one may verify that
$\tau\mapsto\tau^{\sqr}$ is a factor map from the measurable flow
$(\mathcal{M}^{*},S^{*})$ to the semi-flow $(\mathcal{M}^{\sqr},S^{\sqr})$,
i.e. $S_{t}^{\sqr}(\mu^{\sqr})=(S_{t}^{*}\mu)^{\sqr}$, and so an
$S^{*}$-invariant distribution $Q$ is pushed via $\mu\mapsto\mu^{\sqr}$
to an $S^{\sqr}$-invariant distribution $P=Q^{\sqr}$ called the
restricted version of $Q$. Conversely, if $P$ is a an $S^{\sqr}$-invariant
distribution then there is a unique $S^{*}$-invariant distribution
$Q$ on $\mathcal{M}^{*}$, called the \emph{extended version }of
$P$, satisfying $Q^{\sqr}=P$. The extended version may be obtained
as the inverse limit of the diagram \[
\ldots\xrightarrow{S_{1}^{\sqr}}\mathcal{M}^{\sqr}\xrightarrow{S_{1}^{\sqr}}\mathcal{M}^{\sqr}\xrightarrow{S_{1}^{\sqr}}\mathcal{M}^{\sqr}\]
(so dynamically $Q$ is the natural extension of $P$). Indeed, starting
from a left-infinite sequence $(\ldots,\mu_{-2},\mu_{-1},\mu_{0})$
with $S_{1}^{\sqr}\mu_{i+1}=\mu_{i}$, there is a unique measure $\mu_{\infty}\in\mathcal{M}^{*}$
such that $\mu_{\infty}|_{[-b^{n},b^{n}]}=S_{n}^{*}\mu_{-n}$, and
the induced distribution $Q$ on these measures is seen to be $S^{*}$-invariant
and satisfy $Q^{\sqr}=P$.

We shall usually not make the distinction between the extended and
restricted versions of these flows. We note for later use that they
have the same pure point spectrum.

\subsection{\label{sub:prediction-measures-as-scenery}The scenery flow of a
$T_{a}$-invariant measure}

In this section we construct a flow associated to a $T_{b}$-invariant
measure, and study its properties. Let $\Omega=\Omega_{b}=\{0,\ldots,b-1\}^{\mathbb{Z}}$,
and denote the shift map on $\Omega$ by $T$, i.e. $(T\omega)_{i}=\omega_{i+1}$.
We write $\omega_{I}$ for the subsequence $(\omega_{i})_{i\in I}$. 

Let $\xi_{k}:\Omega\rightarrow[0,b^{-k}]$ denote the map\[
\xi_{k}(\omega)=\sum_{i=k+1}^{\infty}b^{-i}\omega_{i}\]
In particular we write\[
\xi=\xi_{0}\]
which is the base-$b$ coding map taking $\omega\in\Omega$ to the
point $x\in[0,1]$ whose base-$b$ expansion is $0.\omega_{1}\omega_{2}\ldots$
(note that this map is everywhere uncountable-to-one since it discards
the non-positive coordinates of $\omega$).

Every non-atomic $T_{b}$-invariant measure $\mu$ lifts to a unique
$T$-invariant measure $\widetilde{\mu}$ on $\Omega$ such that $\xi\widetilde{\mu}=\mu$;
the system $(\widetilde{\mu},T)$ is a realization of the natural
extension of $(\mu,T_{b})$. For $x\in[0,1]$ we denote by $\mu^{(x)}$
the ergodic component of $x$ in $\mu$, and write $\widetilde{\mu}^{(x)}$
for the unique ergodic component of $\widetilde{\mu}$ which maps
under $\xi_{0}$ to $\mu^{(x)}$. We also write $\widetilde{\mu}^{(\omega)}$
for the ergodic component of $\omega\in\Omega$.

Recall that a measure on $\mathbb{R}$ has exact dimension $\alpha$
if its local dimension exists a.e. and is a.e. equal to $\alpha$.

Below we construct a map $\pi:\Omega\rightarrow\mathcal{M}$, usually
denoted $\omega\mapsto\mu_{\omega}$, defined a.e. for every $T$-invariant
measure on $\Omega$, and satisfying the following properties.
\begin{prop}
\label{pro:prediction-measures-and-their-properties}For every $T_{b}$-invariant
measure $\mu$, the following hold:
\begin{enumerate}
\item \label{enu:prediction-measure-superposition} $\mu^{(x)}$ can be
represented as\[
\mu^{(x)}=\int(U_{-\xi(\omega)}\mu_{\omega})|_{[0,1]^{d}}\, d\widetilde{\mu}^{(x)}(\omega)\]
and in particular\[
\mu=\int(U_{-\xi(\omega)}\mu_{\omega})|_{[0,1]^{d}}\, d\widetilde{\mu}(\omega)\]

\item \label{enu:scenery-flow-comes-from-prediction-meas}The map $\pi^{*}:\omega\mapsto\mu_{\omega}^{*}$
intertwines the actions of $T$ and $S_{\log b}^{*}$, i.e.\[
S_{\log b}^{*}\mu_{\omega}^{*}=\mu_{T\omega}^{*}\]
In particular, the distribution \[
\widetilde{P}_{x}=\pi^{*}\widetilde{\mu}^{(x)}\]
is $S_{\log b}^{*}$-invariant, and the map\[
\pi^{*}:(\Omega,\widetilde{\mu}^{(x)},T)\rightarrow(\mathcal{M}^{*},\widetilde{P}_{x},S_{\log b}^{*})\]
is a factor map of discrete-time systems.
\item \label{enu:dimension-of-prediction-measures}$\mu_{\omega}$ has exact
dimension $h(\widetilde{\mu}^{(\omega)})/\log b$.
\end{enumerate}
\end{prop}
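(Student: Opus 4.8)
The plan is to first construct the map $\pi$ by a blow-up/martingale procedure, and then to obtain (1)--(3) as, respectively, a disintegration identity, an equivariance of the construction, and the Shannon--McMillan--Breiman theorem.

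\emph{Construction.} For $\omega\in\Omega$ let $x_{-k}(\omega)=b^{-k}\xi_{-k}(\omega)\in[0,1]$ be the point whose base-$b$ digits are $\omega_{-k+1},\omega_{-k+2},\dots$; this is the $k$-th $T_b$-preimage of $x:=\xi(\omega)$ singled out by the past of $\omega$, and as $k$ grows the level-$k$ base-$b$ cylinder about $x_{-k}(\omega)$ is refined to a sub-cylinder. Let $\nu_k(\omega)=\bigl(S_{k\log b}(U_{x_{-k}(\omega)}\mu)\bigr)^{*}$ be the ``view of $\mu$ from $x_{-k}(\omega)$ at scale $b^{-k}$'', normalized to unit mass on $[-1,1]$. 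I would show that $\nu_k(\omega)$ converges vaguely as $k\to\infty$ for $\widetilde\mu$-a.e.\ $\omega$, by applying the martingale convergence theorem to the conditional measures of $\widetilde\mu$ along the increasing filtration $\sigma(\omega_{-k+1},\dots,\omega_0)$ that govern the successive $\nu_k(\omega)$; set $\mu_\omega:=\lim_k\nu_k(\omega)$, so that $\mu_\omega([-1,1])=1$ and $0\in\supp\mu_\omega$. Since the limit depends only on the ergodic component $\widetilde\mu^{(\omega)}$ — itself the limit of the empirical measures $\frac1N\sum_{n<N}\delta_{T^n\omega}$, hence a function of $\omega$ alone — and a.e.\ convergence passes through the ergodic decomposition, $\pi:\omega\mapsto\mu_\omega$ is defined on a single Borel set carrying full measure for every $T$-invariant measure on $\Omega$.

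\emph{The three properties.} (1) Unwinding the construction, $(U_{-\xi(\omega)}\mu_\omega)|_{[0,1]}$ is the $\xi$-pushforward of the conditional measure of $\widetilde\mu^{(x)}$ on the past $\sigma$-algebra $\sigma(\omega_i:i\le0)$, evaluated at $\omega$; integrating over $\omega\sim\widetilde\mu^{(x)}$ and using that these conditionals disintegrate $\widetilde\mu^{(x)}$ gives $\mu^{(x)}=\xi\widetilde\mu^{(x)}$, and the same argument with $\widetilde\mu$ itself yields the ``in particular''. (2) Since $x_{-k}(T\omega)=x_{-k+1}(\omega)$, the defining sequence $\nu_k(T\omega)$ is $\nu_{k-1}(\omega)$ followed by one further dilation by $S_{\log b}$; passing to the limit, $\mu_{T\omega}$ equals $S_{\log b}\mu_\omega$ up to the scalar forced by the $(\cdot)^{*}$-normalization, i.e.\ $\mu^{*}_{T\omega}=S^{*}_{\log b}\mu^{*}_\omega$; invariance of $\widetilde P_x=\pi^{*}\widetilde\mu^{(x)}$ and the factor-map statement are then immediate from $T$-invariance of $\widetilde\mu^{(x)}$ and the intertwining. (3) Iterating the relation $S_{\log b}\mu_\omega=\mu_\omega([-b^{-1},b^{-1}])\,\mu_{T\omega}$ (a restatement of (2)) gives $\mu_\omega(B_{b^{-k}}(0))=\prod_{i=0}^{k-1}\mu_{T^i\omega}([-b^{-1},b^{-1}])$; by the Shannon--McMillan--Breiman theorem, together with the standard comparison of balls and base-$b$ cylinders, this decays at exponential rate $h(\widetilde\mu^{(\omega)})$, so $\mu_\omega$ has local dimension $h(\widetilde\mu^{(\omega)})/\log b$ at $0$; property (2) and the invariance of $\widetilde P_x$ then transport exactness to $\mu_\omega$-a.e.\ point.

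\emph{Main obstacle.} The real work is all in the construction: pinning down exactly which conditional measures of $\widetilde\mu$ control the $\nu_k(\omega)$, handling the translations by the moving ancestors $x_{-k}(\omega)$ — which are \emph{not} measurable with respect to the $\sigma$-algebra one is conditioning on, so vague convergence is more than a bare martingale statement — and controlling the limit near $\partial[-1,1]$, where $\mu\mapsto\mu^{\sqr}$ and the semiflow $S^{\sqr}$ are discontinuous (this is also why one must pass to the extended flow $S^{*}$). Once $\pi$ is built correctly, (1)--(3) are, respectively, the tower property of conditional expectation, a one-line reindexing, and the ergodic theorem, essentially as above.
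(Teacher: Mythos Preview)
Your construction has a real gap, one you yourself flag but do not close. The sequence $\nu_k(\omega)$ you propose---rescaled views of $\mu$ at the moving preimages $x_{-k}(\omega)$---is \emph{not} a martingale with respect to the finite-past filtration $\sigma(\omega_{-k+1},\dots,\omega_0)$: the centering point $x_{-k}(\omega)$ depends on the entire tail $\omega_{[-k+1,\infty)}$, and $\nu_k$ is not the conditional expectation of any fixed random measure along this (or any evident) filtration. The paper sidesteps the problem entirely. Rather than zoom into $\mu$, it conditions $\widetilde\mu$ on the \emph{infinite} past $\omega_{(-\infty,k]}$, pushes forward by $\xi_k$, translates to center at the origin, and normalizes. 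The resulting measures $\mu_{\omega,k}$ are then \emph{consistent}: for $k'<k\le 0$ the measure $\mu_{\omega,k'}$ literally agrees with $\mu_{\omega,k}$ on the (growing) support of the latter, so as $k\to-\infty$ they increase to a Radon measure $\mu_\omega$ with no convergence argument needed at all. What you wrote down is the paper's $\mu'_{\omega,k}$; its convergence to $\mu_\omega$ \emph{is} proved later, but via the martingale theorem on the \emph{future} filtration $\sigma(\omega_i:i>k)$ applied to the already-constructed random measure $\omega\mapsto\mu_\omega$---so even there the limit object must exist first.

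Once $\mu_\omega$ is built in this way, your sketches of (1)--(3) are essentially the paper's arguments, with one further wrinkle in (3): your product formula yields the local dimension of $\mu_\omega$ only at $0$, and the intertwining in (2) does not transport this to $\mu_\omega$-a.e.\ point---it relates $\mu_\omega$ at $0$ only to $\mu_{T\omega}$ at $0$. The paper instead applies Shannon--McMillan--Breiman (in its conditional form) directly to $\mu(\cdot\,|\,\omega_{(-\infty,0]})$, obtaining exact dimension at a.e.\ point of \emph{that} measure, and then transfers it to $\mu_\omega$ via the identity $(U_{-\xi(\omega)}\mu_\omega)|_{[0,1]}=\mu(\cdot\,|\,\omega_{(-\infty,0]})$ together with its analogues at deeper levels $k$.
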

We now begin the construction of $\mu_{\omega}$. Let $\mu$ be a
$T_{b}$-invariant measure. Given a left-infinite sequence $\omega_{(-\infty;k]}\in\{0,\ldots,b-1\}^{(-\infty;k]}$
let $\widetilde{\mu}(\cdot\,|\,\omega_{(-\infty;k]})$ be the probability
measure obtained by conditioning $\widetilde{\mu}$ on the set $\{\eta\in\Omega\,:\,\eta_{(-\infty;k]}=\omega_{(-\infty;k]}\}$.
Note that this set can be identified, using $\xi_{k}$, with $[0,b^{-k}]$.
Define the measure $\mu(\cdot|\omega_{(-\infty;k]})$ on $[0,b^{-k}]$
by pushing these conditional measure forward through $\xi_{k}$, i.e.
\[
\mu(\cdot\;|\;\omega_{(-\infty;k]})=\widetilde{\mu}(\xi_{k}^{-1}(\cdot)\;|\;\omega_{(-\infty;k]})\]
Note that this definition depends only on the ergodic component $\widetilde{\mu}^{(\omega)}$,
rather than the pair $(\widetilde{\mu},\omega)$.

For any measurable $A\subseteq[0,b^{-k}]$ we have the relation \[
\mu(A\;|\;\omega_{(-\infty;k]})=c_{\omega}^{k}\cdot\mu(A+b^{-k}\omega_{k}\;|\;\omega_{(-\infty;k-1]})\]
where $c_{\omega}^{k}$ is a normalizing constant chosen so that equality
holds for $A=[0,b^{-k}]$. More generally, for $k<m$ and $A\subseteq[0,b^{-m+1}]$,
we have\begin{equation}
\mu(A\;|\;\omega_{(-\infty;m]})=c_{\omega}^{k,m}\cdot\mu(A+\sum_{k<i\leq m}b^{-i}\omega_{i}\;|\;\omega_{(-\infty;k]})\label{eq:consistency}\end{equation}
where $c_{\omega}^{k,m}=c_{\omega}^{k+1}\cdot\ldots\cdot c_{\omega}^{m}$. 

It follows that the sequence of measures $\mu_{\omega,k}\in\mathcal{M}$,
defined for $k\leq0$ by \begin{equation}
\mu_{\omega,k}(A)=c_{\omega}^{k,0}\cdot\mu(A+\sum_{i=k+1}^{\infty}b^{-i}\omega_{i}\;|\;\omega_{(-\infty,k]})\label{eq:k-th-prediction-measure}\end{equation}
agree as $k\rightarrow-\infty$ on the increasing sequence of intervals\[
[-\xi_{k}(\omega),\xi_{k}(\omega)+b^{-k}]=[-\sum_{i=k+1}^{\infty}b^{-i}\omega_{i},\sum_{i=k+1}^{\infty}b^{-i}\omega_{i}+b^{-k}]\]
and vanish outside of them. Therefore, as $k\rightarrow-\infty$ the
measures $\mu_{\omega,k}$ converge to a Radon measure which we denote
\[
\mu_{\omega}=\lim_{k\rightarrow-\infty}\mu_{\omega,k}\]

Let us now verify the properties stated in Proposition \ref{pro:prediction-measures-and-their-properties}.
First, from equation \eqref{eq:k-th-prediction-measure} we see that
for $k<0$,\[
\mu_{\omega,k}(A-\xi_{0}(\omega))=c_{\omega}^{k,0}\cdot\mu(A+\sum_{i=k+1}^{0}b^{-i}\omega_{i}\;|\;\omega_{(-\infty,k]})\]
which, from equation \eqref{eq:consistency}, implies \[
\mu_{\omega}(A-\xi_{0}(\omega))=\mu(A\,|\,\omega_{(-\infty,0]})\]
Integrating over $\omega\sim\widetilde{\mu}^{(x)}$ or over $\omega\sim\widetilde{\mu}$
gives part \eqref{enu:prediction-measure-superposition} of Proposition
\ref{pro:prediction-measures-and-their-properties}.

Note that $0\in\supp\mu_{\omega}$ and the relation \eqref{eq:consistency}
implies \begin{equation}
\mu_{T\omega}(A)=c_{\omega}\mu_{\omega}(\frac{1}{b}A)\label{eq:consistency-2}\end{equation}
for some constant $c_{\omega}$ independent of $A$. Thus by \eqref{eq:consistency-2},
the map $\omega\mapsto\mu_{\omega}^{*}$ is defined $\widetilde{\mu}$-a.e.
and intertwines the shift $T$ and the scaling map $S_{\log b}^{*}$,
i.e. \[
S_{\log b}^{*}\mu_{\omega}^{*}=\mu_{T\omega}^{*}\]
This establishes part \eqref{enu:scenery-flow-comes-from-prediction-meas}
of Proposition \ref{pro:prediction-measures-and-their-properties}
(the later statements in that part follow from the first).

Let \[
[\omega_{1}\ldots\omega_{n}]=\{\eta\in\Omega\,:\,\eta_{1}\ldots\eta_{n}=\omega_{1}\ldots\omega_{n}\}\]
denote the cylinder set corresponding to a finite sequence $\omega_{1}\ldots\omega_{n}$.
A variant of the Shannon-McMillan-Breiman theorem states that for
$\widetilde{\mu}$-a.e. $\omega$, \[
\lim_{n\rightarrow\infty}\frac{1}{n}\log\widetilde{\mu}([\omega_{1},\ldots,\omega_{n}]\,|\,\omega_{(-\infty;0]})=h(\widetilde{\mu}^{(\omega)})\]
As a consequence for $\widetilde{\mu}$-a.e. $\omega$ the measure
$\tau=\mu(\cdot|\omega_{(-\infty,0]})$ has exact dimension $h(\widetilde{\mu}^{(\omega)})/\log b$,
i.e. \[
\lim_{r\searrow0}\frac{\log\tau(B_{r}(x))}{\log r}=\frac{h(\widetilde{\mu}^{(\omega)})}{\log b}\qquad\mbox{ at }\tau\mbox{-a.e. point }x\]
(what is obvious is that this limit holds when, instead of $B_{r}(x)$,
we consider the mass of $b$-adic intervals containing $x$, since
these correspond to cylinder sets; the version above follows using
e.g. \cite[Theorem 15.3]{Pesin97}). The same argument also holds
for $\mu(\cdot|\omega_{(-\infty,k]})$ and for any $k<0$, hence for
$\mu_{\omega,k}$, and gives the result for $\mu_{\omega}$. 

As a special case, we remark that when $h(\widetilde{\mu})=0$ all
the conditional measures $\mu(\cdot|\omega_{(-\infty,k]})$ consist
of a single atom, and consequently, $\mu_{\omega}=\delta_{0}$. Likewise,
when $\widetilde{\mu}$ is $\lambda^{*}$ it is easy to verify that
$\mu_{\omega}=\lambda^{*}$ for $\widetilde{\mu}$-a.e. $\omega$.
In these cases the flows $P_{x}$ are trivial, consisting of point
masses at the $S^{*}$-fixed points $\delta_{0}$ or $\lambda^{*}$.

\subsection{\label{sub:Convergence-of-sceneries}Convergence of the scenery to
the scenery flow}

We now turn to the sceneries of $T_{b}$-invariant measures and their
relation to the flow constructed above. We continue to work in dimension
$d=1$ and with the notation of the previous section.
\begin{prop}
\label{pro:convergence-of sceneries}Let $\mu$ be a $T_{b}$-invariant
measure and let $\widetilde{\mu},\widetilde{P}_{x}$ be as in Proposition
\ref{pro:prediction-measures-and-their-properties}. Then at $\mu$-a.e.
$x$, $\mu$ generates the $S^{*}$-invariant distribution\[
P_{x}=\int_{0}^{1}S_{t\log b}^{*}\widetilde{P}_{x}\, dt\]
In particular, the scenery flow generated by $\mu$ at $x$ depends
only on the ergodic component of $x$ and arises as a factor of the
$\log b$-suspension of $(\Omega,\widetilde{\mu}^{(x)},T)$.
\end{prop}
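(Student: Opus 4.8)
The plan is to show that the scenery $(\mu_{x,t})_{t\ge 0}$ equidistributes for the claimed distribution $P_x$ by combining three ingredients: the intertwining relation of Proposition \ref{pro:prediction-measures-and-their-properties}(2), the pointwise ergodic theorem applied to the natural extension $(\Omega,\widetilde\mu,T)$, and the observation that the scenery flow is, up to the normalization operations $*$ and $\square$, the $\log b$-suspension of this discrete system. Concretely, I would first fix a $\widetilde\mu$-generic $\omega$ lying over a $\mu$-generic $x$, and compare the genuine scenery $\mu_{x,t}$ — obtained by translating $\mu$ to put $x$ at the origin, dilating by $e^t$, and restricting/normalizing to $[-1,1]$ — with the ``model'' scenery $S^*_t(\mu_\omega^*)$ coming from the prediction measure. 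The key point is that these two agree on the cube $[-1,1]$ for all $t$, i.e. $\mu_{x,t}=(S^*_t\mu_\omega)^{\square}$: indeed, the relation $\mu_\omega(A-\xi_0(\omega))=\mu(A\,|\,\omega_{(-\infty,0]})$ established in the previous subsection says precisely that after translating $x=\xi(\omega)$ to $0$ and restricting to $[0,1]$ (suitably rescaled), $\mu$ and $\mu_\omega$ coincide up to a scalar; and since $\xi(\omega)\in[0,1]$, for $t$ large the window $[-e^{-t},e^{-t}]+x$ around $x$ is contained in $[0,1]$, so the local structure of $\mu$ near $x$ is faithfully recorded by $\mu_\omega$. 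Thus sceneries of $\mu$ at $x$ and sceneries of the model measure $\mu_\omega$ at $0$ are literally equal for all sufficiently large $t$, and a fortiori asymptotic.

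Next I would handle the time-change between the discrete shift $T$ and the continuous scaling flow $S^*$. By Proposition \ref{pro:prediction-measures-and-their-properties}(2), $S^*_{\log b}\mu^*_\omega=\mu^*_{T\omega}$, so the orbit $(S^*_{t}\mu^*_\omega)_{t\ge 0}$ is exactly the $\log b$-suspension flow over $(\Omega,T)$ evaluated at the point $(\omega,0)$: for $t=n\log b+s$ with $n\in\mathbb{N}$ and $0\le s<\log b$ we have $S^*_t\mu^*_\omega=S^*_s(\mu^*_{T^n\omega})$. Therefore the ergodic averages $\frac1T\int_0^T f(\mu_{x,t})\,dt$ — after passing to the $\square$ picture and using the first step — become, up to $O(1/T)$ boundary terms from the final incomplete block, averages of the form $\frac{1}{N}\sum_{n=0}^{N-1}\frac{1}{\log b}\int_0^{\log b} f\big((S^*_s\mu^*_{T^n\omega})^{\square}\big)\,ds$. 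The inner integral is a continuous (bounded measurable) function $g$ of $T^n\omega$ — here one should note that $g$ is the integral of a bounded function over $s$, which smooths out the discontinuities of $S^*$, so $g\in L^1(\widetilde\mu^{(x)})$ — and the pointwise ergodic theorem on $(\Omega,\widetilde\mu^{(x)},T)$ gives convergence of this average to $\int g\,d\widetilde\mu^{(x)}=\int\frac{1}{\log b}\int_0^{\log b}f\big((S^*_s\mu^*_\omega)^{\square}\big)\,ds\,d\widetilde\mu^{(x)}(\omega)$. Unravelling the definitions $\widetilde P_x=\pi^*\widetilde\mu^{(x)}$ and $S^{\square}_{s}=(S^*_s\cdot)^{\square}$, this is exactly $\int f\,dP_x$ with $P_x=\int_0^1 S^*_{t\log b}\widetilde P_x\,dt$, after the change of variable $s=t\log b$ and pushing through the $\square$-map.

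The main obstacle will be bookkeeping around the discontinuity of $S^{\square}$ (and $S^*$) at measures charging the boundary of $[-1,1]$: one must check that $f\mapsto \frac{1}{\log b}\int_0^{\log b} f((S^*_s\,\cdot\,)^{\square})\,ds$ really defines an integrable, and for the continuity argument essentially continuous, function on $\mathcal M^*$, and that the set of $\omega$ whose model scenery ever hits such a bad measure along the orbit is $\widetilde\mu$-null — this uses non-atomicity of $\mu$ and the fact that a $b$-adic point has $\widetilde\mu$-measure zero. A second, more minor point is the transfer from $b$-adic-interval sceneries (which the Shannon--McMillan--Breiman-type statement and the intertwining relation naturally give) to genuine dyadic-window sceneries $\mu_{x,t}$ centered at $x$ rather than at the left endpoint of the $b$-adic interval containing $x$; this is the same translation $U_{\xi(\omega)}$ bookkeeping as above and is absorbed once one works with $\mu_\omega$, whose origin is already the translated copy of $x$. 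Everything else — the ergodic averaging, the suspension identification, the final statement that $P_x$ depends only on the ergodic component and is a factor of the $\log b$-suspension of $(\Omega,\widetilde\mu^{(x)},T)$ — follows formally from Proposition \ref{pro:prediction-measures-and-their-properties} and the definitions in Section \ref{sub:extended-scenery-flow}.
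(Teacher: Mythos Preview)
There is a genuine gap in your first step. You claim that the scenery of $\mu$ at $x=\xi(\omega)$ and the $S^*$-orbit of $\mu_\omega^*$ are ``literally equal for all sufficiently large $t$'', invoking the relation $\mu_\omega(A-\xi_0(\omega))=\mu(A\,|\,\omega_{(-\infty,0]})$. But that relation identifies the translated $\mu_\omega$ with the \emph{conditional} measure $\mu(\cdot\,|\,\omega_{(-\infty,0]})$, not with $\mu$ itself. Since $x=\xi(\omega)$ depends only on the coordinates $\omega_{[1,\infty)}$, zooming into $\mu$ at $x$ at time $n\log b$ sees $\mu$ conditioned on the first $n$ digits $\omega_1,\ldots,\omega_n$; zooming into $\mu_\omega$ sees, via the intertwining relation, $\mu_{T^n\omega}$, which is $\mu$ conditioned on $\omega_{(-\infty,n]}$. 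These two conditionings differ precisely by the past $\omega_{(-\infty,0]}$ and are \emph{not} equal at any finite scale unless the process is Bernoulli. Your argument as written therefore proves only that the scenery of the conditional measure $\mu(\cdot\,|\,\omega_{(-\infty,0]})$ equidistributes for $P_x$, not the scenery of $\mu$.

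This is exactly the issue the paper addresses. It introduces the \emph{unconditioned} measures $\mu'_{\omega,k}$, verifies that $(\mu'_{T^k\omega,-k})^\sqr=\mu_{x,k\log b}$ (so these really are the discrete sceneries of $\mu$), and then proves $\mu'_{\omega,k}\to\mu_\omega$ via a martingale argument (Proposition \ref{pro:sceneries-asymp-to-prediction-measures}). Because this convergence is only asymptotic along the orbit --- one is averaging $F_k\circ T^k$ with $F_k\to F$ rather than a fixed observable --- the ordinary pointwise ergodic theorem does not suffice, and the paper invokes Maker's ergodic theorem (Theorem \ref{thm:Maker}) instead. Once you insert these two ingredients, the remainder of your outline (the suspension bookkeeping and the final smoothing by $\int_0^1 S^\sqr_{t\log b}\,dt$) is correct and matches the paper.
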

Note that, since by Proposition \ref{pro:identification of generated-distributions-1}
$\widetilde{P}_{x}$ is $S_{\log b}^{*}$ invariant and ergodic, it
follows immediately that $P_{x}$, as defined above, is $S^{*}$-invariant
and ergodic and that $(\mathcal{M}^{*},\widetilde{P}_{x},S_{\log b}^{*})$
is a factor of the $\log b$-suspension of the discrete time system
$(\mathcal{M}^{*},\widetilde{P}_{x},S_{\log b}^{*})$. Since $(\mathcal{M}^{*},\widetilde{P}_{x},S_{\log b}^{*})$
is a factor of $(\Omega,\widetilde{\mu}^{(x)},T)$ by Proposition
\ref{pro:prediction-measures-and-their-properties}, the statement
in the second part of the proposition above is automatic, and it remains
only to prove the first, i.e. that $\mu$ generates $P_{x}$ at $x$.

To this end we introduce another sequence of measures $\mu'_{\omega,k}$
on $[0,b^{-k}]$ by \[
\mu'_{\omega,k}(A)=c_{\omega}^{k,0}\cdot\widetilde{\mu}(\xi_{k}^{-1}(A+\sum_{i=k+1}^{\infty}b^{-i}\omega_{i}))\]
This is the same as the definition of $\mu_{\omega,k}$ except we
have not conditioned on $\omega_{(-\infty,k]}$
\begin{prop}
\label{pro:sceneries-asymp-to-prediction-measures}$\lim_{k\rightarrow-\infty}\mu'_{\omega,k}=\mu_{\omega}$
weak-{*} on any compact set in $\mathbb{R}$.\end{prop}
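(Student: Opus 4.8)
The plan is to show that the difference between $\mu'_{\omega,k}$ and $\mu_{\omega,k}$ tends to zero weak-$*$ on compacta, and then invoke the already-established convergence $\mu_{\omega,k}\to\mu_\omega$ from Section \ref{sub:prediction-measures-as-scenery}. Recall the two families differ only in that $\mu'_{\omega,k}$ uses the unconditioned measure $\widetilde\mu$ where $\mu_{\omega,k}$ uses $\widetilde\mu$ conditioned on the left tail $\omega_{(-\infty;k]}$; concretely $\mu_{\omega,k}(A)=c_\omega^{k,0}\cdot\widetilde\mu(\xi_k^{-1}(A+\sum_{i>k}b^{-i}\omega_i)\mid\omega_{(-\infty;k]})$ while $\mu'_{\omega,k}(A)=c_\omega^{k,0}\cdot\widetilde\mu(\xi_k^{-1}(A+\sum_{i>k}b^{-i}\omega_i))$, with the \emph{same} normalizing constant $c_\omega^{k,0}$. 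So the whole point is a martingale-convergence statement: the conditional measures $\widetilde\mu(\cdot\mid\omega_{(-\infty;k]})$, viewed appropriately, stabilize as $k\to-\infty$.

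First I would fix a compact set $K\subseteq\mathbb{R}$ and note that for $k$ sufficiently negative, $K$ is contained in the interval $[-\xi_k(\omega),\xi_k(\omega)+b^{-k}]$ on which both $\mu_{\omega,k}$ and $\mu'_{\omega,k}$ are supported (after the translation by $\sum_{i>k}b^{-i}\omega_i$ this corresponds to a fixed $b$-adic interval $I_k=[\eta_k^-,\eta_k^-+b^{-k}]$ inside $[0,1]$, where $\eta_k^-=\xi_k(\omega)$ lands us at the correct $b$-adic ancestor). Next, for a continuous test function $g$ supported on $K$, expand $\int g\,d\mu_{\omega,k}-\int g\,d\mu'_{\omega,k}$ as a single integral against the signed measure $c_\omega^{k,0}\cdot(\widetilde\mu(\cdot\mid\omega_{(-\infty;k]})-\widetilde\mu(\cdot))$ pushed through $\xi_k$ and translated. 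The key step is then: the Radon–Nikodym derivative $\frac{d\,\widetilde\mu(\cdot\mid\omega_{(-\infty;k]})}{d\,\widetilde\mu}$, restricted to the cylinder $\{\eta_{(-\infty;k]}=\omega_{(-\infty;k]}\}$, is $1/\widetilde\mu(\text{that cylinder})$, and by the increasing-martingale convergence theorem for the filtration $\mathcal{F}_k=\sigma(\eta_{(-\infty;k]})$ — equivalently by the Lévy $0$–$1$ law — conditioning on longer and longer left tails converges $\widetilde\mu$-a.e.\ and in $L^1$. Translating this back through $\xi_k$: for $\widetilde\mu$-a.e.\ $\omega$, the finite-level conditional measures $\mu(\cdot\mid\omega_{(-\infty;k]})$ on $[0,b^{-k}]$, after the affine renormalization built into \eqref{eq:k-th-prediction-measure}, converge, which is precisely why the limit $\mu_\omega$ exists. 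Since the unconditioned version $\mu'_{\omega,k}$ is obtained by dropping exactly the conditioning that the martingale argument says becomes asymptotically negligible, the two limits coincide.

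The cleanest way to organize this is to observe that $\mu'_{\omega,k}$ is itself a \emph{weighted average} of measures of the form $\mu_{\omega',k}$ over $\omega'$ ranging in the cylinder $[\omega_1\ldots\omega_n]$-type set determined by the coordinates we have \emph{not} conditioned on — more precisely, $\widetilde\mu(\xi_k^{-1}(\cdot))=\int\widetilde\mu(\xi_k^{-1}(\cdot)\mid\eta_{(-\infty;k]})\,d\widetilde\mu(\eta)$, and on the relevant domain the integrand depends on $\eta$ only through $\eta_{(-\infty;k]}$ in a way that, by the SMB-type estimate already invoked in the excerpt (giving exact dimension $h(\widetilde\mu^{(\omega)})/\log b$), is uniformly close to the $\eta=\omega$ term for $\widetilde\mu$-typical $\omega$. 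I expect the \textbf{main obstacle} to be handling the normalizing constants $c_\omega^{k,0}$ uniformly: one must check that the ratio of total masses $\widetilde\mu(\xi_k^{-1}([0,b^{-k}])\mid\omega_{(-\infty;k]})$ versus $\widetilde\mu(\xi_k^{-1}([0,b^{-k}]))$ stays bounded away from $0$ and $\infty$ along $k\to-\infty$ for a.e.\ $\omega$, so that weak-$*$ convergence of the un-normalized pieces upgrades to weak-$*$ convergence after normalization; this again follows from the martingale/Lévy argument since both quantities converge to the same almost-sure limit, but it requires the a.e.\ statement rather than just $L^1$. Once that uniformity is in hand, a standard $\varepsilon/3$ argument on $\int g\,d\mu'_{\omega,k}$ versus $\int g\,d\mu_{\omega,k}$ versus $\int g\,d\mu_\omega$ closes the proof.
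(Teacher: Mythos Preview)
Your instinct that a martingale argument is the key is right, but the filtration you chose runs in the wrong direction. You take $\mathcal{F}_k=\sigma(\eta_{(-\infty;k]})$; as $k\to-\infty$ this is a \emph{decreasing} family of $\sigma$-algebras, so neither the increasing martingale theorem nor the L\'evy $0$--$1$ law applies in the direction you need. Relatedly, the Radon--Nikodym derivative $\frac{d\widetilde\mu(\cdot\mid\omega_{(-\infty;k]})}{d\widetilde\mu}$ is not $1/\widetilde\mu(\{\eta_{(-\infty;k]}=\omega_{(-\infty;k]}\})$: that cylinder has $\widetilde\mu$-measure zero, since it fixes infinitely many coordinates. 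So the argument as written does not go through, and the ``cleanest way'' paragraph does not repair it: writing $\widetilde\mu(\xi_k^{-1}(\cdot))=\int\widetilde\mu(\xi_k^{-1}(\cdot)\mid\eta_{(-\infty;k]})\,d\widetilde\mu(\eta)$ averages over \emph{all} $\eta$, not just those close to $\omega$, and there is no reason (SMB or otherwise) for the integrand to concentrate near the $\eta=\omega$ term.

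The paper's proof conditions on the \emph{future} instead. Observe that $\mu'_{\omega,k}$ depends on $\omega$ only through $\omega_{[k+1;\infty)}$, so one may write
\[
\mu'_{\omega,k}=\int\mu_{\eta,k}\,d\widetilde\mu(\eta\mid\omega_{[k+1;\infty)})
=\int(\mu_{\eta,k}-\mu_\eta)\,d\widetilde\mu(\eta\mid\omega_{[k+1;\infty)})+\int\mu_\eta\,d\widetilde\mu(\eta\mid\omega_{[k+1;\infty)}).
\]
For the first integral, on any fixed compact set the measures $\mu_{\eta,k}$ and $\mu_\eta$ agree once $k$ is sufficiently negative (this is the stabilization you already used for $\mu_{\omega,k}\to\mu_\omega$), so it vanishes. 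The second integral is a measure-valued martingale with respect to the filtration $\mathcal{G}_k=\sigma(\omega_{[k+1;\infty)})$, which is \emph{increasing} as $k\to-\infty$ and generates the full Borel algebra on $\Omega$; hence it converges a.e.\ to $\mu_\omega$. This avoids entirely the comparison $\mu'_{\omega,k}$ versus $\mu_{\omega,k}$ and the normalizing-constant issue you flagged: the constants $c_\eta^{k,0}$ are already absorbed into $\mu_{\eta,k}$, and no separate uniform control is needed.
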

\begin{proof}
Let $\widetilde{\mu}(\cdot|\omega_{[k+1;\infty)})$ denote the conditional
measure $\widetilde{\mu}$ on sequences $\eta\in\Omega$ given the
{}``future'' $\eta_{[k+1;\infty)}=\omega_{[k+1;\infty)}$. Then
\begin{eqnarray*}
\mu'_{\omega,k} & = & \int\mu_{\eta,k}\, d\widetilde{\mu}(\eta|\omega_{[k+1;\infty)})\\
 & = & \int(\mu_{\eta,k}-\mu_{\eta})\, d\widetilde{\mu}(\eta|\omega_{[k+1;\infty)})\;+\;\int\mu_{\eta}\, d\widetilde{\mu}(\eta|\omega_{[k+1;\infty)})\end{eqnarray*}
The second term in the last expression is a measure-valued martingale
in the variable $\omega$ with respect to the filtration $\mathcal{F}_{k}\subseteq\mathcal{F}_{k-1},\ldots$,
where $\mathcal{F}_{k}$ is the $\sigma$-algebra generated by coordinates
$k+1,k+2,\ldots$. Since these algebras generate the Borel algebra
on $\Omega$, the term on the right converges $\widetilde{\mu}$-a.e.
to $\mu_{\omega}$ as $k\rightarrow-\infty$.

In order to deal with the first term on the right, note that if we
integrate against any compactly supported function $f$ on $\mathbb{R}$,
for $k>k_{0}(\omega)$ the measures $\mu_{\eta,k}$ and $\mu_{\eta,-\infty}$
will agree on the support of $f$, and so the integral will vanish.
Hence the first term also converges to $0$ weak-{*} on any compact
set. \end{proof}
\begin{cor}
$\lim_{k\rightarrow-\infty}(\mu'_{\omega,k})^{\sqr}=(\mu_{\omega})^{\sqr}$
in the weak-{*} sense for $\widetilde{\mu}$-a.e. $\omega$.
\end{cor}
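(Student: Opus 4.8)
The plan is to deduce the corollary directly from Proposition~\ref{pro:sceneries-asymp-to-prediction-measures} by showing that the normalization-and-restriction operation $\tau\mapsto\tau^{\sqr}$ is continuous at the relevant limit measure. Recall that Proposition~\ref{pro:sceneries-asymp-to-prediction-measures} gives $\mu'_{\omega,k}\to\mu_{\omega}$ weak-$*$ on every compact set, for $\widetilde{\mu}$-a.e.\ $\omega$. The only thing standing between this and the corollary is that $S^{\sqr}$ (and hence the map $\tau\mapsto\tau^{\sqr}$) is discontinuous precisely at measures charging the boundary of $[-1,1]$. So the one fact I would isolate and prove is: for $\widetilde{\mu}$-a.e.\ $\omega$, the measure $\mu_{\omega}$ gives zero mass to $\{-1,1\}$, i.e.\ $\mu_{\omega}(\{-1,+1\})=0$.

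First I would establish this no-atoms-on-the-boundary claim. Two routes are available. The softer one: by Proposition~\ref{pro:prediction-measures-and-their-properties}(\ref{enu:dimension-of-prediction-measures}), $\mu_{\omega}$ has exact dimension $h(\widetilde{\mu}^{(\omega)})/\log b$, and since $\mu$ has intermediate entropy this is strictly positive $\widetilde{\mu}$-a.e., so $\mu_{\omega}$ is non-atomic and in particular $\mu_{\omega}(\{\pm1\})=0$. (Even without assuming intermediate entropy one can argue directly: by $S_{\log b}^{*}$-equivariance from Proposition~\ref{pro:prediction-measures-and-their-properties}(\ref{enu:scenery-flow-comes-from-prediction-meas}), an atom of $\mu_{\omega}$ at $\pm1$ would force, by pulling back under $S_{\log b}^{*}$ finitely many times and using $T$-invariance of $\widetilde{\mu}$, a positive-measure set of $\omega$ whose $\mu_{\omega}$ has an atom at a point of modulus $b^{-n}$ for all $n$; a Borel--Cantelli / invariance argument then contradicts finiteness of $\mu_{\omega}$ near $0$, or one simply invokes non-atomicity of $\mu$ itself, which lifts.) Either way the boundary-null property holds a.e.

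Next, with $\mu_{\omega}(\{\pm1\})=0$ fixed, I would argue the continuity step. Weak-$*$ convergence of $\mu'_{\omega,k}$ to $\mu_{\omega}$ on the compact set $[-1,1]$ implies, by the portmanteau theorem, that $\mu'_{\omega,k}([-1,1])\to\mu_{\omega}([-1,1])$ (the boundary being $\mu_{\omega}$-null), and that $\mu'_{\omega,k}|_{[-1,1]}\to\mu_{\omega}|_{[-1,1]}$ weak-$*$ as measures on $[-1,1]$ (again because the limit does not charge the boundary, so no mass escapes or concentrates there). Note $\mu_{\omega}([-1,1])>0$ since $0\in\supp\mu_{\omega}$. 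Dividing by the converging, strictly positive normalizing constants $\mu'_{\omega,k}([-1,1])$ then yields $(\mu'_{\omega,k})^{\sqr}\to(\mu_{\omega})^{\sqr}$ weak-$*$ in $\mathcal{M}^{\sqr}$, which is exactly the assertion of the corollary.

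The main obstacle is the boundary-atom issue: without ruling out mass of $\mu_{\omega}$ at $\pm1$ one cannot pass from weak-$*$ convergence of the unrestricted measures to convergence of the restricted-and-renormalized ones, because $\tau\mapsto\tau^{\sqr}$ is genuinely discontinuous there. Everything else is a routine application of the portmanteau theorem together with the observation that the normalizing constants stay bounded away from $0$ and $\infty$. I would present the argument as: (i) state and prove $\mu_{\omega}(\{\pm1\})=0$ a.e.\ (via exact dimensionality / non-atomicity); (ii) invoke portmanteau on $[-1,1]$; (iii) divide by normalizers.
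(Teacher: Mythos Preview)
Your proposal is correct and is exactly the argument the paper has in mind: the corollary is stated without proof, being regarded as an immediate consequence of Proposition~\ref{pro:sceneries-asymp-to-prediction-measures} via continuity of $\tau\mapsto\tau^{\sqr}$ at measures not charging $\partial[-1,1]$. Your identification of the boundary-atom issue as the one nontrivial point, and your resolution via exact dimensionality (Proposition~\ref{pro:prediction-measures-and-their-properties}(\ref{enu:dimension-of-prediction-measures})), is the right way to fill in this gap; note also that for zero-entropy ergodic components the paper observes $\mu_{\omega}=\delta_{0}$, so $\mu_{\omega}(\{\pm1\})=0$ holds in that case too.
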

For the next step we rely on a classical ergodic theorem due to Maker: 
\begin{thm}
[Maker, \cite{Maker40}]\label{thm:Maker} Let $(\Omega,\nu,T)$ be
a measure preserving system. Let $F_{n}$ be measurable functions
with $\sup_{n}|F_{n}|\in L^{1}$ and suppose that $F_{n}\rightarrow F$
a.e.. Then \[
\frac{1}{N}\sum_{n=1}^{N}T^{n}F_{n}\rightarrow\mathbb{E}(F|\mathcal{E})\]
 a.e., where $\mathcal{E}$ is the $\sigma$-algebra of $T$-invariant
sets.\end{thm}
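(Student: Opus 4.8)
The plan is to reduce Maker's theorem to the Birkhoff pointwise ergodic theorem, isolating the limit $F$ and showing that the remaining \emph{error} term averages to zero. Set $G_n=F_n-F$. Since $|F|=\lim_n|F_n|\le\Phi:=\sup_n|F_n|\in L^1$, we have $F\in L^1$, and moreover $G_n\in L^1$ with $|G_n|\le 2\Phi$, while $G_n\to 0$ a.e. Then
\[
\frac{1}{N}\sum_{n=1}^N F_n\circ T^n \;=\; \frac{1}{N}\sum_{n=1}^N F\circ T^n \;+\; \frac{1}{N}\sum_{n=1}^N G_n\circ T^n ,
\]
and the first term on the right converges a.e. to $\mathbb{E}(F\mid\mathcal{E})$ by Birkhoff's theorem. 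So the whole statement comes down to proving $\frac1N\sum_{n=1}^N G_n\circ T^n\to 0$ a.e.

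For the error term I would introduce the monotone majorants $H_m=\sup_{n\ge m}|G_n|$, which satisfy $H_m\downarrow 0$ a.e. and $0\le H_m\le 2\Phi\in L^1$. By dominated convergence $\|H_m\|_{L^1}\to0$; since each $\mathbb{E}(H_m\mid\mathcal{E})\ge 0$ is monotone decreasing in $m$ and $\mathbb{E}\big(\mathbb{E}(H_m\mid\mathcal{E})\big)=\|H_m\|_{L^1}\to0$, we get $\mathbb{E}(H_m\mid\mathcal{E})\to0$ a.e. (equivalently, invoke conditional dominated convergence directly). Now fix $m$; for $N>m$,
\[
\frac{1}{N}\Big|\sum_{n=1}^N G_n\circ T^n\Big| \;\le\; \frac{1}{N}\sum_{n=1}^m |G_n|\circ T^n \;+\; \frac{1}{N}\sum_{n=1}^N H_m\circ T^n .
\]
The first sum on the right is a fixed finite sum of a.e.-finite functions (each $G_n\in L^1$ and $T$ preserves $\nu$, so $G_n\circ T^n<\infty$ a.e.), hence it $\to0$ as $N\to\infty$; the second converges a.e. to $\mathbb{E}(H_m\mid\mathcal{E})$ by Birkhoff applied to $H_m\in L^1$. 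Therefore $\limsup_N \frac1N|\sum_{n=1}^N G_n\circ T^n|\le\mathbb{E}(H_m\mid\mathcal{E})$ a.e. for every $m$, and letting $m\to\infty$ forces this $\limsup$ to be $0$. Combined with the Birkhoff term, this gives the claimed a.e. convergence to $\mathbb{E}(F\mid\mathcal{E})$.

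The only genuine point — everything else is bookkeeping — is producing a single integrable function that dominates all the tails $\sup_{n\ge m}|G_n|$ simultaneously, so that one can pass to the limit both in $L^1$ and inside the conditional expectation $\mathbb{E}(\cdot\mid\mathcal{E})$; this is precisely the role of the hypothesis $\sup_n|F_n|\in L^1$. Without a uniform domination the tail sums cannot be controlled by a Birkhoff average, and the conclusion can fail. Once that majorant is in hand the remainder of the argument is a routine $\varepsilon$/tail split against the Birkhoff averages of $F$ and of $H_m$.
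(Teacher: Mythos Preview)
Your proof is correct and is essentially the standard argument for Maker's theorem. Note, however, that the paper does not give its own proof of this statement: it is quoted as a classical result with a citation to Maker (1940) and then used as a black box in the proof of Proposition~\ref{pro:convergence-of-discrete-sceneries}, so there is no in-paper proof to compare against.
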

\begin{prop}
\label{pro:convergence-of-discrete-sceneries}For $\mu$-a.e. $x$
\[
\lim_{N\rightarrow\infty}\frac{1}{N}\sum_{n=1}^{N}\delta_{\mu_{x,n\log b}}\rightarrow\widetilde{P}_{x}\]
\end{prop}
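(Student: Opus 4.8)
The plan is to apply Maker's ergodic theorem (Theorem~\ref{thm:Maker}) to the discrete-time system $(\Omega,\widetilde{\mu},T)$ with a carefully chosen sequence of measure-valued functions $F_n$. The natural candidates are $F_n(\omega) = (\mu'_{\omega,-n})^{\sqr}$ (or more precisely the measures on $\mathcal{M}^{\sqr}$ obtained by evaluating against a fixed continuous test function, so that everything is genuinely scalar-valued and $L^1$). The point is the identity $\mu_{x,n\log b} = (S_{n\log b}^{\sqr} U_x \mu)^{\sqr}$, which should unwind, via the change of variables $\xi_0$ and the definitions in Section~\ref{sub:prediction-measures-as-scenery}, into something of the form $(\mu'_{T^n\omega,-n})^{\sqr}$ evaluated at $\omega$ lying over $x$ — i.e. exactly $T^n F_n(\omega)$. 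So the ergodic average $\frac{1}{N}\sum_{n=1}^N \delta_{\mu_{x,n\log b}}$, tested against a fixed $f\in C(\mathcal{M}^{\sqr})$, becomes $\frac{1}{N}\sum_{n=1}^N (T^n F_n)(\omega)$ where $F_n(\omega) = f((\mu'_{\omega,-n})^{\sqr})$.

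The hypotheses of Maker's theorem then need to be checked. First, $F_n \to F$ a.e., where $F(\omega) = f((\mu_\omega)^{\sqr})$: this is precisely the Corollary immediately preceding the statement, i.e. $(\mu'_{\omega,-n})^{\sqr} \to (\mu_\omega)^{\sqr}$ weak-$*$ for $\widetilde\mu$-a.e.\ $\omega$, combined with continuity of $f$. Second, $\sup_n |F_n| \in L^1(\widetilde\mu)$: this is immediate since $\|f\|_\infty$ is a finite constant bounding all the $F_n$ uniformly (each $(\mu'_{\omega,-n})^{\sqr}$ is a probability measure on $[-1,1]$, so $f$ of it is bounded by $\|f\|_\infty$). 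With these in hand, Maker gives $\frac{1}{N}\sum_{n=1}^N (T^n F_n)(\omega) \to \mathbb{E}(F \mid \mathcal{E})(\omega)$ for $\widetilde\mu$-a.e.\ $\omega$. Since $F(\omega) = f((\mu_\omega)^{\sqr}) = f(\pi^{\sqr}(\omega))$ and the conditional expectation with respect to the invariant $\sigma$-algebra restricted to the ergodic component $\widetilde\mu^{(x)}$ is just $\int F\, d\widetilde\mu^{(x)} = \int f\, d(\pi^{\sqr}\widetilde\mu^{(x)}) = \int f\, d\widetilde P_x$, this is exactly $\int f\, d\widetilde P_x$. Passing from $\widetilde\mu$-a.e.\ $\omega$ to $\mu$-a.e.\ $x$ uses that $\mu = \xi_0 \widetilde\mu$ and that $\widetilde\mu^{(x)}$ maps to $\mu^{(x)}$ under $\xi_0$.

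The main obstacle I expect is the bookkeeping in the first step: verifying precisely that $\mu_{x,n\log b}$, as a restricted-and-normalized blow-up of $U_x\mu$ around $x$ at scale $b^{-n}$, coincides (weak-$*$, hence after testing against $f$, exactly in the limit relevant for the Cesàro average) with $(\mu'_{T^n\omega,-n})^{\sqr}$ — one has to track the centering by $\xi_0(\omega)$ versus $x$, the difference between $\mu'$ (unconditioned, i.e.\ genuinely built from $\mu$ itself) and $\mu$ (conditioned on the past), and the fact that the true scenery uses the actual measure $\mu$ near $x$ rather than any conditional version. This is why the proof is routed through $\mu'_{\omega,k}$ rather than $\mu_{\omega,k}$: the unconditioned measures are the ones that literally match the blow-ups of $\mu$. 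A minor additional wrinkle is that $S^{\sqr}$ is discontinuous (at measures charging $\partial[-1,1]$), but since $\widetilde\mu$ is non-atomic and of intermediate dimension, $\mu_\omega$-a.e.\ this boundary has zero mass, so the discontinuity is avoided on a full-measure set and the weak-$*$ convergence survives normalization and restriction to $[-1,1]$.
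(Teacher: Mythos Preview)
Your proposal is correct and follows essentially the same route as the paper: apply Maker's theorem to the sequence $F_n(\omega)=(\mu'_{\omega,-n})^{\sqr}$ (tested against a fixed $f\in C(\mathcal{M}^{\sqr})$), use the Corollary for $F_n\to F$, and invoke the identity $(\mu'_{T^n\omega,-n})^{\sqr}=\mu_{x,n\log b}$ for $x=\xi_0(\omega)$ to identify the Ces\`aro average with the discrete scenery. Your reduction to scalar-valued functions via a test function is a clean way to make the $L^1$ hypothesis of Maker's theorem literal, whereas the paper applies it directly to distribution-valued $F_n$; the two are equivalent.
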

\begin{proof}
Define $F_{k}(\omega)=\delta_{(\mu'_{\omega,-k})^{\sqr}}$ and $F_{k}(\omega)=\delta_{(\mu_{\omega})^{\sqr}}$.
From the corollary we know that $F_{k}\rightarrow F$ a.s. and these
distribution-valued functions are uniformly bounded in the space of
distributions on $\mathcal{P}(\mathcal{M}^{\sqr})$. Thus by Maker's
theorem,\[
\frac{1}{N}\sum_{n=1}^{N}F_{k}(T^{k}\omega)\rightarrow\mathbb{E}(F\,|\,\mathcal{E})\]
$\widetilde{\mu}$-a.e. One verifies from the definitions that\[
(\mu'_{T^{k}\omega,k})^{\sqr}=\mu_{x,k\log k}\]
for $x=\xi_{0}(\omega)$. The proposition follows, since $\mathbb{E}(\delta_{(\mu_{\omega})^{\sqr}}\,|\,\mathcal{E})=\widetilde{P}_{x}$.
\end{proof}
Finally, applying the operator $\int_{0}^{1}S_{t\log b}^{\sqr}\, dt$
to the limit in the proposition above, we arrive at the conclusion
of Proposition \ref{pro:prediction-measures-and-their-properties}
concerning continuous-time sceneries.

\subsection{\label{sub:The-multidimensional-case}The multidimensional case}

We now turn to the higher dimensional setting. Let $*$ denote the
normalization operation $\mu\mapsto\mu^{*}=\frac{1}{\mu([-1,1]^{d})}\mu$
and define the associated objects as in Section \ref{sub:extended-scenery-flow}. 

Let $\mu_{1},\ldots\mu_{d}$ by $T_{b}$-invariant measures and write
$\mu=\mu_{1}\times\ldots\times\mu_{d}$, which is a measure on $[0,1]^{d}$
invariant under the diagonal map $T_{b}(x)=(T_{b}x_{1},\ldots,T_{b}x_{d})$.
The measure $\widetilde{\mu}=\widetilde{\mu}_{1}\times\ldots\times\widetilde{\mu}_{d}$
on $\Omega^{d}\cong(\{0,\ldots,b-1\}^{d})^{\mathbb{Z}}$ is the natural
extension of $\mu$. We may define maps $\xi_{k}:\Omega^{d}\rightarrow[0,b^{-k}]^{d}$
by applying $\xi_{k}$ coordinatewise and define a map $\omega\mapsto\mu_{\omega}\in\mathcal{M}_{d}$
using the same procedure as in dimension $d=1$.

Let $\varphi_{i}$ denote projection to the $i$-th coordinate. Let
$\pi^{*}$ and $\pi_{i}^{*}$ denote the maps $\omega\mapsto\mu_{\omega}^{*}$
and $\eta\mapsto(\mu_{i})_{\eta}^{*}$ respectively.
\begin{prop}
\label{pro:multidim-prediction-measures}Let $\mu_{1},\ldots,\mu_{d}$
be $T_{b}$-invariant measures on $[0,1]$, let $\mu=\times_{i=1}^{d}\mu_{i}$,
and let $\omega\mapsto\mu_{\omega}$ be as above. Then:\end{prop}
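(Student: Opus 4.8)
\textbf{Proof proposal for Proposition \ref{pro:multidim-prediction-measures}.}

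The plan is to reduce the multidimensional statement to the one-dimensional results of Section \ref{sub:prediction-measures-as-scenery} by exploiting the product structure, and to record the three analogues that were proved there: the superposition formula for $\mu$ in terms of the prediction measures, the intertwining of $T$ with $S_{\log b}^{*}$, and the exact-dimensionality statement, now with the entropy $h(\widetilde{\mu}^{(\omega)})$ replaced by a sum of the coordinate entropies. First I would observe that since $\xi_{k}$ acts coordinatewise and $\widetilde{\mu}=\widetilde{\mu}_{1}\times\cdots\times\widetilde{\mu}_{d}$, the conditional measures $\widetilde{\mu}(\cdot\,|\,\omega_{(-\infty;k]})$ factor as products of the one-dimensional conditionals $\widetilde{\mu}_{i}(\cdot\,|\,(\varphi_{i}\omega)_{(-\infty;k]})$. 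Pushing forward through $\xi_{k}$ and passing to the limit $k\to-\infty$ exactly as in the one-dimensional construction, one gets that $\mu_{\omega}$ is the product $\mu_{\omega}=\bigtimes_{i=1}^{d}(\mu_{i})_{\varphi_{i}\omega}$, i.e. $\pi(\omega)=\bigtimes_i \pi_i(\varphi_i\omega)$ and correspondingly $\pi^{*}(\omega)$ is the normalized product. This single identity is the heart of the matter, and everything else follows by combining it with Proposition \ref{pro:prediction-measures-and-their-properties}.

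From the product identity I would deduce the three items. For the superposition formula: since each coordinate satisfies $(\mu_{i})^{(x_i)}=\int (U_{-\xi(\eta)}(\mu_i)_{\eta})|_{[0,1]}\,d\widetilde{\mu}_i^{(x_i)}(\eta)$ and restriction to $[0,1]^{d}$ and translation $U_{-\xi(\omega)}$ both respect products, taking the product over $i$ and using Fubini against $\widetilde{\mu}^{(x)}=\bigtimes_i\widetilde{\mu}_i^{(x_i)}$ gives $\mu^{(x)}=\int (U_{-\xi(\omega)}\mu_{\omega})|_{[0,1]^d}\,d\widetilde{\mu}^{(x)}(\omega)$, and integrating over $x$ gives the statement for $\mu$ itself. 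For the intertwining: the shift $T$ on $\Omega^{d}$ is the diagonal shift, and relation \eqref{eq:consistency-2} holds in each coordinate with the same scaling factor $1/b$, so $\mu_{T\omega}(A)=c_{\omega}\mu_{\omega}(\frac1b A)$ holds on $\mathbb{R}^{d}$ with a single scalar $c_\omega$; hence $S_{\log b}^{*}\mu_{\omega}^{*}=\mu_{T\omega}^{*}$, and the induced distributions $\widetilde{P}_x=\pi^{*}\widetilde{\mu}^{(x)}$ are $S_{\log b}^{*}$-invariant with $\pi^{*}$ a factor map from $(\Omega^{d},\widetilde{\mu}^{(x)},T)$. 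For the dimension: a product of measures of exact dimensions $\alpha_i$ has exact dimension $\sum_i\alpha_i$ (this is standard; it follows from the corresponding almost-sure pointwise statement $\log\mu_\omega(B_r(x))/\log r \to \sum_i \log(\mu_i)_{\varphi_i\omega}(B_r(x_i))/\log r$ together with the one-dimensional exact-dimensionality), so $\mu_\omega$ has exact dimension $\sum_{i=1}^{d}h(\widetilde{\mu}_i^{(\varphi_i\omega)})/\log b = h(\widetilde{\mu}^{(\omega)})/\log b$, the last equality being additivity of entropy over independent factors applied to the ergodic component.

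The step I expect to require the most care is the exact-dimensionality claim for the product, specifically the passage from the $b$-adic (cylinder) version, which is immediate from additivity of the Shannon–McMillan–Breiman limits across the independent coordinates, to the genuine ball version; as in the one-dimensional case one invokes a comparison between $b$-adic cells and Euclidean balls (the $d$-dimensional analogue of \cite[Theorem 15.3]{Pesin97}), and one must also confirm that the ergodic components of $\widetilde{\mu}=\bigtimes_i\widetilde{\mu}_i$ are again products of ergodic components of the $\widetilde{\mu}_i$ so that the entropy splits as claimed — a point that is true but deserves an explicit sentence. Everything else is a routine transcription of Section \ref{sub:prediction-measures-as-scenery} with products inserted, and the convergence-of-sceneries statement (that $\mu$ generates $\int_0^1 S_{t\log b}^{*}\widetilde{P}_x\,dt$ at a.e. $x$) goes through verbatim, since Maker's theorem and the martingale argument of Propositions \ref{pro:sceneries-asymp-to-prediction-measures} and \ref{pro:convergence-of-discrete-sceneries} are dimension-agnostic.
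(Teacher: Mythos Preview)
Your argument is correct and in fact takes a more direct route to the product identity $\mu_{\omega}=\times_{i=1}^{d}(\mu_{i})_{\varphi_{i}\omega}$ than the paper does. You observe that, since $\widetilde{\mu}$ is a product and the conditioning event $\{\eta:\eta_{(-\infty;k]}=\omega_{(-\infty;k]}\}$ is a product event, the conditional measures, the pushforwards by $\xi_{k}$, the translations and the normalizing constants all split as products, so $\mu_{\omega,k}=\times_{i}(\mu_{i})_{\varphi_{i}\omega,k}$ and the limit inherits the product form. The paper proceeds in the opposite order: it first asserts that the one-dimensional arguments of Propositions \ref{pro:prediction-measures-and-their-properties} and \ref{pro:convergence-of sceneries} carry over verbatim to $\mathbb{R}^{d}$, and only then identifies $\mu_{\omega}$ as a product by a dynamical argument --- the sceneries $\mu_{x,t}$ are product measures, the set of product measures is closed, hence the limit distribution is supported on products, and a joining/equidistribution argument pins down the marginals. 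Your construction-level argument avoids this detour entirely; the paper's approach, on the other hand, makes the commutative diagram in part \eqref{enu:diagram} fall out of the equidistribution picture rather than the algebra.

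One point to correct: it is \emph{not} generally true that the ergodic components of $\widetilde{\mu}=\times_{i}\widetilde{\mu}_{i}$ are products of ergodic components of the $\widetilde{\mu}_{i}$ (a product of two ergodic systems with common discrete spectrum need not be ergodic, and its ergodic components are nontrivial joinings). Fortunately you do not need this. The identity $h(\widetilde{\mu}^{(\omega)})=\sum_{i}h(\widetilde{\mu}_{i}^{(\varphi_{i}\omega)})$ follows directly from the conditional Shannon--McMillan--Breiman theorem applied to $\widetilde{\mu}$ itself: since $\widetilde{\mu}([\omega_{1}\ldots\omega_{n}]\,|\,\omega_{(-\infty;0]})=\prod_{i}\widetilde{\mu}_{i}([\omega^{i}_{1}\ldots\omega^{i}_{n}]\,|\,\omega^{i}_{(-\infty;0]})$, taking $-\frac{1}{n}\log$ and letting $n\to\infty$ gives the sum of coordinate entropies on the right and $h(\widetilde{\mu}^{(\omega)})$ on the left. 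This is all the dimension statement requires.
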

\begin{enumerate}
\item \label{enu:convergence-for-products}The analogs of Propositions \ref{pro:prediction-measures-and-their-properties}
and \ref{pro:convergence-of sceneries} hold.
\item \label{enu:prediction-measure-for-products}$\mu_{\omega}=\times_{i=1}^{d}(\mu_{i})_{\omega^{i}}$.
\item \label{enu:diagram}If $\mu$ generates $\widetilde{P}_{x}$ at $x\in[0,1]^{d}$
and $\mu_{i}$ generates $\widetilde{P}_{i,y}$ at $y$, and we identify
product measures with $d$-tuples of measures, then the following
diagram factor maps commutes:\begin{eqnarray*}
([0,1]^{d},\widetilde{\mu}^{(x)},T) & \xrightarrow{\pi^{*}} & (\mathcal{M}^{d},\widetilde{P}_{x},S_{\log b}^{*})\\
\varphi_{i}\downarrow\qquad &  & \qquad\downarrow\varphi_{i}\\
([0,1],\widetilde{\mu}_{i}^{(x_{i})},T) & \xrightarrow{\pi_{i}^{*}} & (\mathcal{M},\widetilde{P}_{i,x_{i}},S_{\log b}^{*})\end{eqnarray*}
\end{enumerate}
\begin{proof}
The proof of \eqref{enu:convergence-for-products} is the same as
the 1-dimensional case. 

For \eqref{enu:prediction-measure-for-products}, note that the space
of product measures on $\mathbb{R}^{d}$ is closed and each $\mu_{x,t}$
is a product measure, so the scenery flow of a product measure is
supported on product measures. Hence by part \eqref{enu:convergence-for-products},
$\mu_{\omega}$ is a product measure. 

In order to see that $\mu_{\omega}=\times_{i=1}^{d}\mu_{i,\omega^{i}}$,
let $Q$ be the $S_{\log b}^{*}$-invariant distribution on pairs
of measures obtained by pushing forward $\widetilde{\mu}$ through
the map $\omega\mapsto((\mu_{1})_{\omega^{1}}^{*},\mu_{\omega}^{*})$,
and let $Q_{x}$ be the push-forward of $\widetilde{\mu}^{(x)}$ by
the same map, so that $Q=\int Q_{x}\, d\widetilde{\mu}(x)$. We wish
to show that $Q$-a.e. pair $(\tau,\nu_{1}\times\ldots\times\nu_{d})$
satisfies $\tau=\nu_{1}$, so we must show that for $\mu$-a.e. $x$
this holds for $Q_{x}$. To see this, consider for $\mu$-typical
$x\in[0,1]^{d}$ the sequence \[
(\tau_{n},\nu_{n})=((\mu_{1})_{x_{1},n\log b},\mu_{x,n\log b})\]
$n=1,2,3\ldots$, and repeat the proof of Proposition \ref{pro:convergence-of sceneries}
to conclude that $\frac{1}{N}\sum_{n=1}^{N}\delta_{(\tau_{n},\nu_{n})}\rightarrow Q_{x}$.
Since the relationship $\tau_{n}=\pi_{1}(\nu_{n})$ holds for all
$n$ and this is a closed condition, it holds also for the limiting
distribution $Q_{x}$. 

Finally, the commutativity of the diagram is a direct result of the
relationship $(\mu_{i})_{\omega^{i}}=\pi_{i}(\mu_{\omega})$.
\end{proof}

\subsection{\label{sub:Eigenvalues-and-ergodicity}Eigenvalues and ergodicity
of flows}

We briefly present some technical facts about flows and their spectrum
and ergodicity properties. For sake of economy we present the discussion
for an invariant distribution $P$ on $(\mathcal{M}^{*},S^{*})$. 

A function $f\in L^{2}(P)$ generally is defined only $P$-a.e. and
hence for typical $\nu$ it is defined at $S_{t}^{*}\nu$ for only
Lebesgue-a.e. $t$. The next lemma says that a function which behaves
like an eigenfunction at a.e. point along a.e. orbit may be modified
on a set of measure zero to become an eigenfunction.
\begin{lem}
\label{lem:canonical-eigenfunctions}Let $\varphi\in L^{2}(P)$ and
suppose that for every $t\in\mathbb{R}$ we have $S_{t}^{*}\varphi=e(\alpha t)\varphi$
$P$-a.e. Then there exists $\overline{\varphi}\in L^{2}(P)$ which,
for $P$-a.e. $\nu$, satisfies $S_{t}^{*}\overline{\varphi}(\nu)=e(\alpha t)\varphi(\nu)$
for every $t\in\mathbb{R}$, and $\varphi=\overline{\varphi}$ a.e.\end{lem}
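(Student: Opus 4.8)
The goal is to upgrade a function $\varphi$ that is an eigenfunction in the ``almost everywhere for each fixed $t$'' sense to a genuine eigenfunction: one for which the eigenfunction equation holds simultaneously for \emph{all} $t\in\mathbb{R}$, off a single null set. This is a standard Fubini-plus-averaging maneuver, and the plan is as follows. First, I would define a candidate replacement by averaging along the flow against the character $e(-\alpha t)$: set
\[
\overline{\varphi}(\nu)=\int_{0}^{1}e(-\alpha t)\,\varphi(S_{t}^{*}\nu)\,dt .
\]
This integral makes sense for $P$-a.e.\ $\nu$: by Fubini applied to the (measurable, $P\times\lambda|_{[0,1]}$-integrable) function $(t,\nu)\mapsto\varphi(S_t^*\nu)$, the inner integral is finite for $P$-a.e.\ $\nu$, and one checks $\overline{\varphi}\in L^2(P)$ by Cauchy--Schwarz together with the fact that each $S_t^*$ preserves $P$, so $\|\varphi\circ S_t^*\|_{L^2(P)}=\|\varphi\|_{L^2(P)}$.

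Next I would show $\overline{\varphi}=\varphi$ $P$-a.e. For each fixed $t$ the hypothesis gives $\varphi(S_t^*\nu)=e(\alpha t)\varphi(\nu)$ for $P$-a.e.\ $\nu$; plugging this into the defining integral and using Fubini to interchange the $t$-integral with the $P$-a.e.\ statement, we get $\overline{\varphi}(\nu)=\int_0^1 e(-\alpha t)e(\alpha t)\varphi(\nu)\,dt=\varphi(\nu)$ for $P$-a.e.\ $\nu$. Then I would verify the honest eigenfunction identity for $\overline{\varphi}$: for every real $s$,
\[
\overline{\varphi}(S_s^*\nu)=\int_0^1 e(-\alpha t)\varphi(S_{t+s}^*\nu)\,dt
= e(\alpha s)\int_0^1 e(-\alpha u)\varphi(S_u^*\nu)\,du=e(\alpha s)\,\overline{\varphi}(\nu),
\]
by the substitution $u=t+s$ and periodicity of the integrand $u\mapsto e(-\alpha u)\varphi(S_u^*\nu)$ under $u\mapsto u+1$ (which itself follows from the a.e.\ hypothesis with $t=1$, giving $\varphi(S_{u+1}^*\nu)=e(\alpha)\varphi(S_u^*\nu)$, hence $e(-\alpha(u+1))\varphi(S_{u+1}^*\nu)=e(-\alpha u)\varphi(S_u^*\nu)$ for a.e.\ $u$, for $P$-a.e.\ $\nu$). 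Crucially this computation is valid for \emph{every} $s$ and $P$-a.e.\ $\nu$, because the expression $\int_0^1 e(-\alpha u)\varphi(S_u^*\nu)\,du$ depends on $\nu$ only through the orbit of $\nu$, and after fixing a single null set (outside which the $t=1$ relation and the orbit-integrability hold) the manipulation is deterministic in $s$. Finally I would note $S_t^*\overline{\varphi}(\nu)=e(\alpha t)\varphi(\nu)$ as required, since $\overline{\varphi}=\varphi$ $P$-a.e.\ and so $\varphi$ may be used on the right-hand side.

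The one genuine subtlety — and the point I would be most careful about — is the measurability and joint-integrability of $(t,\nu)\mapsto\varphi(S_t^*\nu)$, given that $S^*$ is only a \emph{measurable}, not continuous, flow (indeed it is only partially defined, on measures charging $[-1,1]$ with $0$ in their support). I would handle this by first passing to a conull $S^*$-invariant Borel set on which the flow is everywhere defined and jointly Borel in $(t,\nu)$ — such a set exists by the general theory of measurable flows (e.g.\ one can use a Borel cross-section argument, or simply invoke that a measurable flow admits a conull invariant set on which it is a genuine Borel action) — and then the Fubini and substitution steps above are legitimate. Everything else is the routine averaging argument; no quantitative estimate beyond Cauchy--Schwarz is needed, and the eigenvalue $\alpha$ and the flow-invariance of $P$ do all the work.
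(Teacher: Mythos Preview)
Your proof is correct and takes exactly the same approach as the paper: the paper's proof consists of the single line ``Define $\overline{\varphi}(\nu)=\int_{0}^{1}e(-\alpha t)\varphi(S_{t}^{*}\nu)\,dt$'', which is precisely your candidate, and your Fubini/substitution/periodicity verification fills in the routine details the paper omits. Your attention to joint measurability of the flow is more careful than the paper itself, but it is a valid concern and your handling of it is appropriate.
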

\begin{proof}
Define $\overline{\varphi}(\nu)=\int_{0}^{1}e(-\alpha t)\varphi(S_{t}^{*}\nu)$
\end{proof}
Usually, the ergodic decomposition of a measure is defined only in
an a.e. sense. For the decomposition of $P$ with respect to $S_{t_{0}}^{*}$
we can give a more canonical description. We say that an $S_{t_{0}}^{*}$-invariant
distribution $Q$ is an ergodic component of $P$ (with respect to
$S_{t_{0}}^{*}$) if it is ergodic for $S_{t_{0}}^{*}$ and $\int_{0}^{1}S_{t_{0}\cdot t}^{*}Q\, dt=P$.
Note that if $P=\int Q_{\nu}\, dP(\nu)$ is an abstract ergodic decomposition
of $P$ with respect to $S_{t_{0}}^{*}$ then $P=\int(\int_{0}^{1}S_{t_{0}\cdot t}^{*}Q_{\nu}\, dt)\, dP(\nu)$,
and each of the inner integrals is $S^{*}$-invariant. Therefore,
ergodicity of $P$ implies that for $P$-a.e. $\nu$ the inner integral
is $P$, so $Q_{\nu}$ is an ergodic component. Hence ergodic components
exist.
\begin{lem}
\label{lem:canonical-ergodic-components}If $Q$ and $Q'$ are ergodic
components for $S_{t_{0}}^{*}$ then $S_{t_{0}\cdot r}^{*}Q'=Q$ for
some $r\in[0,1]$. In particular, the representations of $P$ as $\int_{0}^{1}S_{t_{0}\cdot t}^{*}Q\, dt$
does not depend (up to a translation modulo 1 of the parameter space)
on the ergodic component $Q$. \end{lem}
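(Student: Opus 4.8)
The plan is to deduce the statement from uniqueness of the ergodic decomposition of $P$ with respect to $S_{t_0}^*$. Recall that the Borel set of $\nu\in\mathcal{M}^*$ with $0\in\supp\nu$, on which $S^*$ acts, is a standard Borel space, so the space of distributions on it is standard Borel as well, the set $\mathcal{E}$ of $S_{t_0}^*$-ergodic distributions is a Borel subset of it, and the ergodic decomposition theorem provides a \emph{unique} Borel probability measure $\Pi_P$ on $\mathcal{E}$ whose barycenter is $P$, i.e. $P=\int R\, d\Pi_P(R)$. The argument is then to identify both $\int_0^1 S_{t_0 t}^*Q\, dt$ and $\int_0^1 S_{t_0 t}^*Q'\, dt$ as barycenters of explicit measures on $\mathcal{E}$, conclude these measures coincide, and read off that $Q,Q'$ lie on a common $S^*$-orbit.

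First I would observe that for each $t$ the distribution $S_{t_0 t}^*Q$ is again $S_{t_0}^*$-ergodic: $S_{t_0 t}^*$ is an invertible Borel map commuting with $S_{t_0}^*$ (the $S^*$-action being an abelian flow), so it carries $S_{t_0}^*$-invariant resp. ergodic distributions to ones of the same type. Moreover $t\mapsto S_{t_0 t}^*Q$ is Borel, since the $S^*$-action is Borel. Hence $\Pi_Q:=(t\mapsto S_{t_0 t}^*Q)_*(\mathrm{Leb}|_{[0,1)})$ is a Borel probability measure on $\mathcal{E}$, and by the definition of an ergodic component its barycenter is $\int_0^1 S_{t_0 t}^*Q\, dt=P$. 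By uniqueness $\Pi_Q=\Pi_P$, and likewise $\Pi_{Q'}=\Pi_P$.

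Next, let $\mathcal{O}(Q)=\{S_{t_0 t}^*Q:t\in\mathbb{R}\}\subseteq\mathcal{E}$ be the $S^*$-orbit of $Q$; since $Q$ is $S_{t_0}^*$-invariant this equals $\{S_{t_0 t}^*Q:t\in[0,1)\}$, the range of the Borel map above, hence an analytic, therefore $\Pi_P$-measurable, set. Then $\Pi_P(\mathcal{O}(Q))=\Pi_Q(\mathcal{O}(Q))=\mathrm{Leb}([0,1))=1$, and similarly $\Pi_P(\mathcal{O}(Q'))=1$, so $\mathcal{O}(Q)\cap\mathcal{O}(Q')\neq\emptyset$. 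If $S_{t_0 s}^*Q=S_{t_0 s'}^*Q'$ for some $s,s'$, then applying $S_{-t_0 s}^*$ (legitimate since $S^*$ is an invertible flow) gives $Q=S_{t_0(s'-s)}^*Q'$; reducing $s'-s$ modulo $1$ using $S_{t_0}^*Q'=Q'$ yields the first claim with $r\in[0,1)$. Finally, substituting $Q=S_{t_0 r}^*Q'$ and reparametrizing $t\mapsto t+r\ (\mathrm{mod}\ 1)$, using $1$-periodicity of $t\mapsto S_{t_0 t}^*Q'$, shows that the family $(S_{t_0 t}^*Q)_{t\in[0,1)}$ is a cyclic shift by $r$ of $(S_{t_0 t}^*Q')_{t\in[0,1)}$, which is the second claim.

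The only genuine subtlety, and the step I would be most careful about, is the measure-theoretic bookkeeping behind the invocation of uniqueness of the ergodic decomposition: checking that the ambient spaces are standard Borel, that $\mathcal{E}$ is Borel, that the orbit maps are Borel so that the pushforwards $\Pi_Q,\Pi_{Q'}$ are well-defined Borel measures on $\mathcal{E}$, and that orbits are (universally) measurable sets. None of this is deep, given that $S^*$ acts in a Borel fashion on a standard Borel space, but it is precisely what makes the one-line ``by uniqueness'' step legitimate. (Alternatively one could run a direct measurable-selection argument separating $Q$ from $S_{t_0 t}^*Q'$ on each orbit, but this essentially re-proves uniqueness and is less transparent.)
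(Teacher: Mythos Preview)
Your proof is correct and follows essentially the same approach as the paper: both arguments view $t\mapsto S_{t_0 t}^*Q$ and $t\mapsto S_{t_0 t}^*Q'$ as ergodic decompositions of $P$ with respect to $S_{t_0}^*$, invoke uniqueness of the ergodic decomposition to conclude these families coincide, and then read off that $Q$ and $Q'$ lie on a common $S^*$-orbit. Your write-up is more careful about the measurable-structure bookkeeping (Borelness of the orbit map, measurability of the orbit), which the paper leaves implicit.
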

\begin{proof}
Since $\int_{0}^{1}S_{t_{0}\cdot t}^{*}Q\, dt$, $\int_{0}^{1}S_{t_{0}\cdot t}^{*}Q'\, dt$
are both ergodic decompositions of $P$, by  uniqueness of the ergodic
decomposition we see that for a.e. $t\in[0,1]$ there is an $s\in[0,1]$
with $S_{t_{0}\cdot t}^{*}Q=S_{t_{0}\cdot s}^{*}Q'$. Then for $r=s-t$
(or $r=1+s-t$ if $s<t$), we have $Q=S_{t_{0}(s-t)}^{*}Q'$. The
second statement is immediate from the first.\end{proof}
\begin{lem}
\label{lem:invariance-of-ergodic-components}Let $Q$ be an ergodic
component of $P$ with respect to $S_{t_{0}}^{*}$. Then either $Q=P$,
i.e. $S_{t_{0}}^{*}$ is ergodic, or there is a largest $n\in\mathbb{N}$
such that $Q$ is invariant under $S_{t_{0}/n}^{*}$.\end{lem}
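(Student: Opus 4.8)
\textbf{Proof proposal for Lemma \ref{lem:invariance-of-ergodic-components}.}

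The plan is to analyze the set $H = \{ s \ge 0 : S_s^* Q = Q \}$ of ``periods'' of the ergodic component $Q$, and show it is a closed subgroup of $\mathbb{R}$ (intersected with $[0,\infty)$), then use the classification of closed subgroups of $\mathbb{R}$. First I would observe that $t_0 \in H$ by hypothesis, so $H$ contains the discrete subgroup $t_0\mathbb{Z}_{\ge 0}$. Next, if $s, s' \in H$ then $S_{s+s'}^* Q = S_s^*(S_{s'}^* Q) = S_s^* Q = Q$, so $H$ is closed under addition; combined with $t_0\mathbb{Z} \subseteq H$ this makes $H$ (more precisely $H - H$) a genuine subgroup of $\mathbb{R}$ containing $t_0\mathbb{Z}$.

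The crucial point is \emph{closedness}: I claim $H$ is a closed subset of $\mathbb{R}$. This is where the main obstacle lies, because $S^*$ is only a measurable (not continuous) flow, so one cannot simply invoke continuity of $s \mapsto S_s^* Q$ in the weak topology on distributions. To get around this I would argue as follows. The map $(s,\nu) \mapsto S_s^*\nu$ is jointly measurable, so $s \mapsto S_s^* Q$ is a measurable map from $\mathbb{R}$ into the space of distributions; alternatively, for each bounded measurable test function $g$ on $\mathcal{M}^*$, the function $s \mapsto \int g \, d(S_s^* Q) = \int g(S_s^*\nu)\, dQ(\nu)$ is measurable in $s$, and in fact continuous on any interval avoiding a null set of ``bad'' scales where mass escapes the boundary; one then shows that the condition $S_s^* Q = Q$ defines a measurable set $H$ which, being a subgroup of $\mathbb{R}$, is either dense or discrete (a measurable subgroup of $\mathbb{R}$ that is not all of $\mathbb{R}$ and not dense must be discrete; and a measurable dense subgroup of positive measure would be all of $\mathbb{R}$). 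If $H = \mathbb{R}$ then $S_s^* Q = Q$ for all $s$, and integrating $P = \int_0^1 S_{t_0 t}^* Q\, dt = Q$, so $S_{t_0}^*$ is ergodic, i.e. $Q = P$.

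Otherwise $H$ is a discrete subgroup of $\mathbb{R}$ containing $t_0\mathbb{Z}$, hence $H = \rho\mathbb{Z}_{\ge 0}$ for some $\rho > 0$ with $t_0 \in \rho\mathbb{Z}$, i.e. $t_0 = n\rho$ for a positive integer $n$; this $n$ is the largest integer for which $Q$ is $S_{t_0/n}^*$-invariant, since $t_0/n = \rho$ is the smallest positive period and any $S_{t_0/m}^*$-invariance would force $t_0/m \in H = \rho\mathbb{Z}$, i.e. $m \mid n$. That $n$ is finite follows because $\rho > 0$. The one subtlety to handle carefully is ensuring $\rho > 0$: if the infimum of positive periods were $0$, discreteness would fail; but a nontrivial measurable subgroup of $\mathbb{R}$ is either discrete or conull, and we are in the non-conull case, so $\rho > 0$. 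This completes the dichotomy. I expect the routine part to be the subgroup bookkeeping, and the real work to be the measurability/closedness argument for $H$ under the merely-Borel flow $S^*$.
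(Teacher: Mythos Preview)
Your approach is the same as the paper's: study the period subgroup $\{s : S_s^*Q = Q\}$ (the paper works modulo $t_0$, on $\mathbb{R}/\mathbb{Z}$, but this is cosmetic), note that it is a subgroup containing $t_0\mathbb{Z}$, and invoke the dense/discrete dichotomy for subgroups of $\mathbb{R}$. The paper's argument is equally terse on the point you flag as the main obstacle, simply asserting that measurability of $t\mapsto S_{t_0 t}^*Q$ forces the period set to be discrete or everything.

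There is, however, a genuine error in your treatment of that obstacle. You assert that ``a nontrivial measurable subgroup of $\mathbb{R}$ is either discrete or conull'', but this is false: $\mathbb{Q}$ is a Borel subgroup that is dense, proper, and null. Steinhaus only rules out proper dense subgroups of \emph{positive} measure, which is the case you need not worry about; the dense null case is precisely what you must exclude, and your argument does not touch it. The clean fix is to upgrade measurability to continuity: since $Q$ is an ergodic component of the $S^*$-invariant distribution $P$ one has $Q\ll P$, so $s\mapsto S_s^*Q$ is identified with $s\mapsto \tfrac{dQ}{dP}\circ S_{-s}^*$ in $L^1(P)$, and strong continuity of the Koopman representation of a measurable measure-preserving flow on $L^1$ makes this map continuous. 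Its stabilizer $H$ is then closed, and a \emph{closed} subgroup of $\mathbb{R}$ really is either discrete or all of $\mathbb{R}$.
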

\begin{proof}
Consider the map $q:\mathbb{R}/\mathbb{Z}\rightarrow\mathcal{P}(\mathcal{M})$
given by $q(t)=S_{t_{0}\cdot t}^{*}Q$, which is well defined since
$S_{t_{0}}^{*}Q=Q$. Let $\Lambda\subseteq\mathbb{R}/\mathbb{Z}$
denote the set of periods of $q$, that is $r\in\Lambda$ if $q(t+r)=q(t)$
for $t\in\mathbb{R}/\mathbb{Z}$, or equivalently, such that $q(r)=q(0)$.
Since $q$ is measurable with respect to Lebesgue measure, either
$\Lambda$ is discrete or $\Lambda=\mathbb{R}/\mathbb{Z}$. In the
latter case, since $P=\int_{\mathbb{R}/\mathbb{Z}}q(t)\, dt=q(0)=Q$.
In the former case $\Lambda$ has the form $\{\frac{k}{n}\,:\,0\leq k<n\}$
for some $n$, and this is the $n$ we are looking for.\end{proof}
\begin{lem}
\label{lem:ergodicity-vs-spectrum}For $t_{0}>0$ the following are
equivalent:
\begin{enumerate}
\item $\frac{1}{t_{0}}\in\Sigma_{P}$.
\item For some (equivalently every) ergodic component $Q$ of $S_{t_{0}}^{*}$,
the flow $(\mathcal{M}^{*},P,S^{*})$ is isomorphic to the $t_{0}$-suspension
of $(\mathcal{M}^{*},Q,S_{t_{0}}^{*})$.
\item $S_{t_{0}}^{*}$ is not ergodic, and its ergodic components are not
preserved under $S_{t_{0}/n}^{*}$ for any $n\in\mathbb{N}$.
\end{enumerate}
\end{lem}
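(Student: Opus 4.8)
The plan is to prove the cycle $(2)\Rightarrow(1)\Rightarrow(3)\Rightarrow(1)\Rightarrow(2)$, i.e.\ the two equivalences $(1)\Leftrightarrow(2)$ and $(1)\Leftrightarrow(3)$, with the condition $\frac{1}{t_0}\in\Sigma_P$ in the pivotal role; throughout I use that $P$ is ergodic for $S^*$, as in the rest of this subsection. The dictionary underlying everything is that if $\frac{1}{t_0}\in\Sigma_P$ then by Lemma~\ref{lem:canonical-eigenfunctions} we may fix an eigenfunction $\varphi\in L^2(P)$ with $|\varphi|\equiv1$ for which $\varphi(S_t^*\nu)=e(t/t_0)\varphi(\nu)$ holds for \emph{every} $t\in\mathbb{R}$ and $P$-a.e.\ $\nu$, while conversely any such $\varphi$ exhibits $\frac{1}{t_0}$ as an eigenvalue. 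Observe that $\varphi\circ S_{t_0}^*=\varphi$, so $\varphi$ is $S_{t_0}^*$-invariant, and that $\varphi$ cannot be constant (a constant $c$ would force $c=e(t/t_0)c$ for all $t$). The implication $(2)\Rightarrow(1)$ is then immediate: on the $t_0$-suspension of any system the function $(\omega,s)\mapsto e(s/t_0)$ satisfies the eigenvalue equation for $\frac{1}{t_0}$ by the explicit formula for the suspension flow, and an isomorphism of flows transports this eigenfunction.

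For $(1)\Rightarrow(3)$ take $\varphi$ as above. Since $\varphi$ is a non-constant $S_{t_0}^*$-invariant function, $S_{t_0}^*$ is not ergodic, so Lemma~\ref{lem:invariance-of-ergodic-components} gives a largest $n\in\mathbb{N}$ with some ergodic component $Q$ satisfying $S_{t_0/n}^*Q=Q$; I claim $n=1$. On $Q$, ergodicity forces $\varphi=c_Q$ $Q$-a.e.\ with $|c_Q|=1$; integrating $\varphi\circ S_{t_0/n}^*=e(1/n)\varphi$ against $Q$ and using $(S_{t_0/n}^*)_*Q=Q$ gives $c_Q=e(1/n)c_Q$, hence $e(1/n)=1$ and $n=1$, which is exactly statement~(3). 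Conversely, for $(3)\Rightarrow(1)$, Lemmas~\ref{lem:invariance-of-ergodic-components} and~\ref{lem:canonical-ergodic-components} show that under~(3) the ergodic components of $S_{t_0}^*$ form the single orbit $\{S_{t_0 r}^*Q:r\in\mathbb{R}/\mathbb{Z}\}$ and that $r\mapsto S_{t_0 r}^*Q$ is injective (the stabilizer is trivial, as the $n$ above is $1$). Composing the measurable ergodic-decomposition map $\nu\mapsto Q_\nu$ with this identification yields a measurable $\tau:\mathcal{M}^*\to\mathbb{R}/\mathbb{Z}$, defined $P$-a.e.; since $(S_s^*)_*Q_\nu$ is the ergodic component of $S_s^*\nu$ we get $\tau(S_s^*\nu)=\tau(\nu)+s/t_0 \bmod 1$, so $\varphi:=e(\tau(\cdot))$ is an eigenfunction and $\frac{1}{t_0}\in\Sigma_P$.

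It remains to deduce $(2)$ from $(1)$. With $\varphi$ as in the dictionary, put $Y=\varphi^{-1}(1)$, a genuinely $S_{t_0}^*$-invariant Borel set. The eigenvalue equation shows that along $P$-a.e.\ orbit $\{S_t^*\nu\}$ one has $\varphi(S_t^*\nu)=1$ precisely for $t$ in a coset of $t_0\mathbb{Z}$, so $Y$ is a global cross-section with constant return time $t_0$ and return map $S_{t_0}^*|_Y$. Writing $\varphi(\nu)=e(\theta(\nu))$ with $\theta(\nu)\in[0,1)$, the map $\Phi(\nu)=(S_{-t_0\theta(\nu)}^*\nu,\ t_0\theta(\nu))$ is a measurable isomorphism of $(\mathcal{M}^*,P,S^*)$ onto the $t_0$-suspension of $(Y,\nu_Y,S_{t_0}^*|_Y)$, where $\nu_Y$ is the induced transverse measure; unwinding the definitions gives $P=\int_0^1 S_{t_0 t}^*\nu_Y\,dt$. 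Ergodicity of $P$ for $S^*$ forces $\nu_Y$ to be $S_{t_0}^*$-ergodic (an $S_{t_0}^*$-invariant $A\subseteq Y$ has $S^*$-saturation of $P$-measure $\nu_Y(A)$, necessarily $0$ or $1$), so $\nu_Y$ is itself an ergodic component $Q$ of $S_{t_0}^*$ and $(\mathcal{M}^*,P,S^*)$ is the $t_0$-suspension of $(\mathcal{M}^*,Q,S_{t_0}^*)$. The passage from this particular $Q$ to an arbitrary ergodic component is Lemma~\ref{lem:canonical-ergodic-components}: any other component is $S_{t_0 r}^*Q$, and $S_{t_0 r}^*$ conjugates the corresponding suspensions.

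The main obstacle is the step $(1)\Rightarrow(2)$, because the cross-section/suspension correspondence must be carried out in the purely measurable category: $S^*$ is only a Borel flow (discontinuous at measures charging $\partial[-1,1]^d$) and the eigenfunction is a priori defined merely $P$-a.e., so one first has to invoke Lemma~\ref{lem:canonical-eigenfunctions} to obtain a truly everywhere-defined $\varphi$ and an honest invariant level set $Y$, and then verify measurability of $\Phi$, that $\Phi_*P$ is the product of $\nu_Y$ with normalized Lebesgue measure on $[0,t_0)$, and the identity $P=\int_0^1 S_{t_0 t}^*\nu_Y\,dt$. By contrast $(2)\Rightarrow(1)$, $(1)\Rightarrow(3)$ and $(3)\Rightarrow(1)$ are short once the bookkeeping with Lemmas~\ref{lem:canonical-ergodic-components} and~\ref{lem:invariance-of-ergodic-components} is in place.
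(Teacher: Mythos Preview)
Your proof is correct and follows essentially the same approach as the paper's sketch: the substantive implications $(1)\Rightarrow(2)$ (eigenfunction level sets give a cross-section/suspension structure) and $(3)\Rightarrow(1)$ (ergodic components parametrize a circle factor) are argued just as in the paper, and you supply considerably more detail, particularly on the measurability issues in $(1)\Rightarrow(2)$. The only cosmetic difference is that the paper runs the cycle $(1)\Rightarrow(2)\Rightarrow(3)\Rightarrow(1)$, deducing $(3)$ from the suspension picture directly, whereas you prove $(1)\Leftrightarrow(2)$ and $(1)\Leftrightarrow(3)$ with $(1)$ as the hub and obtain $(3)$ from the eigenfunction via the constraint $c_Q=e(1/n)c_Q$; both routes to $(3)$ are one-line observations.
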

\begin{proof}
[Proof (sketch)] (1)$\implies$(2): If $\varphi$ is an eigenfunction
for $\frac{1}{t_{0}}$ then one may verify that the ergodic components
of $S_{t_{0}}^{*}$ are precisely the conditional distributions of
$P$ on the level sets of $\varphi$. Fixing an ergodic component
$Q$ supported on a level set $\varphi^{-1}(z)$ define $r:\mathcal{M}^{*}\rightarrow[0,1)$
by $e(t_{0}r(\nu))=z$. Then $\nu\mapsto(S_{-r(\nu)}^{*}\nu,\varphi(\nu))$
is an isomorphism of $(\mathcal{M}^{*},P,S^{*})$ and the $t_{0}$-suspension
of $(\mathcal{M}^{*},Q,S_{t_{0}}^{*})$.

(2)$\implies$(3): Trivial since e.g. the subset of $\mathcal{M}^{*}$
corresponding to $\mathcal{M}^{*}\times[0,\frac{t_{0}}{2})$ in the
suspension is $S_{t_{0}}^{*}$-invariant, but not $S_{t_{0}/n}^{*}$-invariant
for any $1\neq n\in\mathbb{N}$.

(3)$\implies$(1): By the previous lemma we find that the action of
$S^{*}$ on the ergodic components for $S_{t_{0}}^{*}$ is isomorphic
to $[0,t_{0})$ with addition modulo $t_{0}$. Since $P$-a.e. point
belongs to a well defined ergodic component, this gives an eigenfunction
with eigenvalue $\frac{1}{t_{0}}$.
\end{proof}

\subsection{\label{sub:Existence-of-log-b-in-spectrum} The spectrum of $(\mathcal{M}^{*},P_{x},S^{*})$}

In this section we prove Theorem \ref{thm:identification-of-spectrum}.
Let $\mu$ be $T_{b}$-ergodic with entropy strictly between $0$
and $\log b$. Recall the construction and notation from Section \ref{sub:prediction-measures-as-scenery}:
specifically $(\Omega,\widetilde{\mu},T)$ is the natural extension
of $([0,1],\mu,T_{b})$, the image of $\widetilde{\mu}$ under $\omega\mapsto\mu_{\omega}$
is the $S_{\log b}^{*}$-ergodic distribution $\widetilde{P}$ (it
does not depend on $x$ or $\omega$, as it did in previous sections,
because $\mu$ and $\widetilde{\mu}$ are now ergodic), and $P=\int_{0}^{1}S_{t\log b}^{*}\widetilde{P}\, dt$
is the distribution of the scenery flow of $\mu$.

Recall that the lower Hausdorff dimension of a measure $\tau$ is
\[
\ldim\tau=\inf\{\dim A\,:\,\tau(A)>0\}\]
We note that if $\tau$ has exact dimension $\alpha$ then $\ldim\tau=\alpha$
as well. 

It is simple to verify that $\tau\ll\tau'$ implies $\ldim\tau\geq\ldim\tau'$
and, more generally, if a measure $\tau$ can be written as $\tau=\int\tau_{i}\, d\sigma(i)$,
then $\ldim\tau\geq\essinf_{i\sim\sigma}\ldim\tau_{i}$. Thus if $f$
is a map then $f\tau=\int f\tau_{i}\, d\sigma(i)$, and a similar
bound applies.

We are out to show that \[
\frac{n}{\log a}\mathbb{Z}\subseteq\Sigma_{P_{x}}\subseteq\frac{1}{\log a}\Sigma_{\mu^{(x)}}\cup\frac{n}{\log a}\mathbb{Z}\]
The right hand side follows from the fact that $(\mathcal{M}^{*},P_{x},S^{*})$
is a factor of the $\log b$-suspenssion of $([0,1],\mu,T_{b})$.
To establish the left hand inclusion it suffices to prove the following
theorem.
\begin{thm}
\label{thm:existence-of-spectrum}There exists an $n\in\mathbb{N}$
such that $\frac{n}{\log a}\in\Sigma_{(\mathcal{M}^{*},P,S^{*})}$.\end{thm}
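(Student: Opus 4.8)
The goal is to produce an eigenvalue of the form $n/\log b$ for the scenery flow $(\mathcal{M}^{*},P,S^{*})$. By Lemma \ref{lem:ergodicity-vs-spectrum}, this is equivalent to showing that $S_{\log b}^{*}$ is not ergodic and that its ergodic components are not invariant under $S_{\log b/n}^{*}$ for any $n$; the key invariant distribution $\widetilde{P}$ is exactly an ergodic component of $P$ with respect to $S_{\log b}^{*}$, since $P=\int_0^1 S_{t\log b}^{*}\widetilde{P}\,dt$ by construction. So by Lemma \ref{lem:invariance-of-ergodic-components} there is a largest $n$ with $S_{\log b/n}^{*}\widetilde{P}=\widetilde{P}$ \emph{unless} $\widetilde{P}=P$, and in the former case $n/\log b\in\Sigma_P$. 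Hence the whole theorem reduces to one dichotomy: either $\widetilde{P}$ has a nontrivial period $\log b/n$ (giving the eigenvalue immediately), or $S_{\log b}^{*}$ is already ergodic on $\widetilde{P}$, i.e. $\widetilde{P}=P$ — and in that last case I must still manufacture an eigenvalue. The natural candidate is to show that even then $S_1^{*}$ (or more to the point, the full flow) has the discrete time $\log b$ built into it in a different way, or to rule this case out entirely.

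\textbf{Ruling out the bad case.} The crux is to show that $S_{\log b}^{*}$ \emph{cannot} be ergodic on $\widetilde{P}$ when $\mu$ has intermediate entropy. Here is where the dimension/entropy hypothesis must enter, since for Lebesgue measure or a point mass the flow is trivial and nothing like this holds. The mechanism: $\widetilde{P}$ is the push-forward of the ergodic system $(\Omega,\widetilde{\mu},T)$ under $\omega\mapsto\mu_\omega^{*}$, and this factor map is generally far from injective — the measure $\mu_\omega$ records only the ``conditional structure'' of $\mu$ along the orbit of $\omega$, discarding the location $\xi(\omega)$. So $\widetilde{P}$ is a proper factor of $(\Omega,\widetilde\mu,T)$. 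But being a factor does not prevent ergodicity of the time-$\log b$ map. What I actually want is that $\widetilde{P}$ is \emph{periodic} under the sub-flow, i.e. that inside the continuous-time system $P$ the discrete system sits with a genuine return structure. I expect the right argument is: suppose for contradiction $S_{\log b}^{*}$ is ergodic on $\widetilde P$. Then $P = \widetilde P$ would be $S_{\log b}^*$-ergodic, so it would have no eigenvalue $1/t_0$ for $t_0=\log b$, and by Lemma \ref{lem:invariance-of-ergodic-components} no periodicity at all at scale related to $\log b$. I then need to exhibit some structural feature of $\mu_\omega$ — present precisely because $0<h(\widetilde\mu)<\log b$ — that forces a period. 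A promising route: the quantity $\mu_\omega([-b^{-1},b^{-1}])/\mu_\omega([-1,1])$, or the ``mass split'' of $\mu_\omega$ among the $b$ subintervals of $[-1,1]$, is a function on $\mathcal M^{*}$ whose value after applying $S_{\log b}^{*}$ is controlled by $\omega_0$; iterating, the digit sequence $(\omega_n)$ is measurable with respect to the scenery, so $\widetilde{P}$ would retain the full entropy $h(\widetilde\mu)>0$ and in particular would not be a point mass — but that alone is not a contradiction.

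\textbf{Where the real difficulty lies, and the fix.} I think the cleanest correct argument avoids trying to contradict ergodicity of $S_{\log b}^{*}$ directly and instead directly constructs the eigenfunction. Observe that $P=\int_0^1 S_{t\log b}^{*}\widetilde P\,dt$ exhibits $(\mathcal M^{*},P,S^{*})$ as built from a $\mathbb Z$-action (namely $(\mathcal M^{*},\widetilde P,S_{\log b}^{*})$) by suspension \emph{provided} $\widetilde P$ is genuinely an ergodic component, i.e. provided the flow $S^{*}$ moves $\widetilde P$ around nontrivially for time less than $\log b$. If $\widetilde P$ is $S_{\log b/n}^{*}$-invariant for some $n\geq 1$ but no larger divisor, then $\log b/n$ is the genuine ``first return'' and $n/\log b$ is an eigenvalue by the suspension structure — this is precisely case (3)$\Rightarrow$(1) of Lemma \ref{lem:ergodicity-vs-spectrum} applied with $t_0=\log b/n$. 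The only way this fails is $\widetilde P = S_t^{*}\widetilde P$ for \emph{all} $t$, i.e. $\widetilde P$ is $S^{*}$-invariant, hence $\widetilde P=P$ and $S^{*}$ acts on $\widetilde P$ with $S_{\log b}^{*}$ ergodic. So the entire theorem hinges on excluding: \emph{$\widetilde P$ is invariant under the full flow $S^{*}$}. I would exclude this by a dimension-counting or entropy argument: if $S_t^{*}\widetilde P = \widetilde P$ for all $t$, then the discrete system $(\mathcal M^{*},\widetilde P, S_{\log b}^{*})$ embeds in a flow in which $\log b$ has no distinguished role; yet $\widetilde P$ was \emph{defined} via the $b$-adic partition structure of $\mu$, which forces a $\log b$-periodic skeleton (the digits $\omega_n$ are recovered from $\mu_\omega$ at integer multiples of $\log b$ but the partition boundaries $\{b^{-k}j\}$ are visible at all scales only through this arithmetic). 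Concretely, I expect one shows that $S^{*}$-invariance of $\widetilde P$ would force $h(\widetilde\mu)\in\{0,\log b\}$ — the excluded cases — giving the contradiction. \textbf{The main obstacle} is exactly this last step: proving that the intermediate-entropy hypothesis obstructs $S^{*}$-invariance of $\widetilde P$. I anticipate this requires a careful analysis of how the conditional measures $\mu(\cdot\mid\omega_{(-\infty;k]})$ transform under $S_t^{*}$ for non-integer $t/\log b$, using the exact-dimensionality from Proposition \ref{pro:prediction-measures-and-their-properties}(3) to show the scenery genuinely changes at non-$b$-adic scales, together with an ergodic-theoretic argument (perhaps a second application of Maker's theorem, Theorem \ref{thm:Maker}) to upgrade pointwise behavior to a statement about $\widetilde P$.
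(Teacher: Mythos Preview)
Your reduction is exactly right: via Lemmas \ref{lem:invariance-of-ergodic-components} and \ref{lem:ergodicity-vs-spectrum}, everything comes down to excluding the possibility that $\widetilde P$ is invariant under the \emph{full} flow $S^{*}$, and you correctly isolate this as the crux. But the proposal has a genuine gap precisely at this point --- you acknowledge it yourself as ``the main obstacle,'' and the approaches you sketch (tracking how the conditional measures $\mu(\cdot\mid\omega_{(-\infty;k]})$ behave under $S_t^{*}$ for irrational $t/\log b$, or another pass with Maker's theorem) are not the mechanism the paper uses and, as far as I can see, would not close the gap on their own.

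The paper's argument is of a completely different flavor. Assuming $\widetilde P$ is $S^{*}$-invariant, it uses the representation $\mu=\int (U_{-\xi(\omega)}\mu_\omega)|_{[0,1]}\,d\widetilde\mu(\omega)$ together with the assumed $S^{*}$-invariance to introduce an extra averaging over $t\in[0,1]$, so that $\mu$ (and hence $\mu^{\times d}$) decomposes into pieces of the form $S_{t_i\log b}\mu_{\omega^i}$. Pushing the $d$-fold product through $f(x)=\sum x_i$ then becomes, up to absolute continuity, pushing $\times_i\mu_{\omega^i}$ through a \emph{generic} linear map $f_t(x)=\sum e^{t_i}x_i$ with the $t_i$ distributed according to Lebesgue. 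Now Marstrand's projection theorem (in the Hunt--Kaloshin form) applies: since $\times_i\mu_{\omega^i}$ has exact dimension $d\dim\mu>1$ for suitable $d$, almost every such projection has dimension $1$. Hence $\mu^{*d}$ has dimension $1$; being $T_b$-invariant it must be Lebesgue, and a Fourier argument then forces $\mu$ itself to be Lebesgue, contradicting intermediate dimension. The essential idea you are missing is this convolution/projection trick: $S^{*}$-invariance of $\widetilde P$ is exactly what lets you smuggle Lebesgue-typical projection directions into the picture and invoke Marstrand.
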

\begin{proof}
Since $\widetilde{P}$ is an ergodic component of $P$ with respect
to $S_{\log a}^{*}$, by Lemma \ref{lem:invariance-of-ergodic-components}
and \ref{lem:ergodicity-vs-spectrum} it suffices to show that $\widetilde{P}$
is not $S^{*}$-invariant. Suppose that it were $S^{*}$-invariant.
We claim that this implies that $\mu$ is Lebesgue, contradicting
the assumption of intermediate dimension.

To this end, choose an integer $d$ such that $d\dim\mu>1$ and write
$\mu^{*d}$ for the $d$-fold convolution of $\mu$, which is the
image of the $d$-fold product $\times_{i=1}^{d}\mu$ by the map $f(x)=\sum_{i=1}^{d}x_{i}$.
We first show that $\mu^{*d}$ has dimension 1. Recall that by Proposition
\ref{pro:prediction-measures-and-their-properties},\[
\mu=\int U_{-\xi(\omega)}\mu_{\omega}\, d\widetilde{\mu}(\omega)\]
Since $\widetilde{P}$ is $S^{*}$-invariant, there is a function
$\xi(x,t)\in[0,1]$ such that \begin{equation}
\mu=\int\int_{0}^{1}(U_{-\xi(\omega,t)}S_{t\log b}\mu_{\omega})|_{[0,1]}\, dt\,\widetilde{\mu}(\omega)\label{eq:mu-as-superposition-of-S-inv-family}\end{equation}
This gives a similar representation of the product measure: write
$t=(t_{1},\ldots,t_{d})$ and $u^{d}$ for uniform measure on $[0,1]^{d},$
and likewise write $\omega=(\omega^{1},\ldots,\omega^{d})$ and $\widetilde{\mu}^{d}=\times_{i=1}^{d}\widetilde{\mu}$.
Then \[
\times_{i=1}^{d}\mu=\int_{\Omega^{d}}\int_{[0,1]^{d}}\times_{i=1}^{d}\left((U_{-\xi(\omega^{i},t^{i})}S_{t_{i}\log b}\mu_{\omega^{i}}^{*})|_{[0,1]}\right)\, du^{d}(t)\, d\widetilde{\mu}^{d}(\omega)\]
Therefore, using the comments preceding the proposition, \[
\ldim\mu^{*d}\geq\essinf_{\omega\sim\widetilde{\mu}^{d}\,,\, t\sim u^{d}}f\left(\times_{i=1}^{d}(U_{-\xi(\omega^{i},t^{i})}S_{t_{i}\log b}\mu_{\omega^{i}}^{*})|_{[0,1]}\right)\]
Since $f$ is linear and $\xi(\omega^{i},t^{i})\in[0,1]$, we have
\[
\geq\essinf_{\omega\sim\widetilde{\mu}^{d}\,,\, t\sim u^{d}}\ldim f\left(\times_{i=1}^{d}(S_{t\log b}\mu_{\omega})|_{[-1,1]}\right)\]
because each of the previous measures is absolutely continuous with
respect to the corresponding measure above. Writing $f_{t}(x)=\sum t_{i}x_{i}$,
we have by another absolute-continuity argument \[
\geq\essinf_{\omega\sim\widetilde{\mu}^{d}}\left(\essinf_{t\sim u^{d}}\ldim f_{t}((\times_{i=1}^{d}\mu_{\omega^{i}})|_{[-b,2b]})\right)\]
Finally, for fixed typical $\omega$ we have \[
\dim\times_{i=1}^{d}\mu_{\omega^{i}}=\sum_{i=1}^{d}\dim\mu_{\omega^{i}}>1\]
so the inner $\essinf$ (over $t\sim u^{d}$) in the previous expression
is $1$ by the following version of Mastrand's classical theorem on
projections of measures. 
\begin{thm}
[Hunt-Kaloshin \cite{HuntKaloshin97}] Let $\sigma$ be an exact dimensional
probability measure on $\mathbb{R}^{d}$ with $\dim\tau=\alpha$.
Then for Lebesgue-a.e. $(t_{1},\ldots,t_{d})\in\mathbb{R}^{d}$, the
image of $\sigma$ under $x\mapsto\sum_{i=1}^{d}t_{i}x_{i}$ is exact
dimensional and has dimension $\min\{1,\dim\sigma\}$. 
\end{thm}
Thus, we have shown that $\mu^{*d}$ has dimension 1. Next, note that
convolution in $\mathbb{R}/\mathbb{Z}$ is obtained by taking the
convolution in $\mathbb{R}$ modulo $1$. Since this is a countable-to-1
local isometry $\mathbb{R}\rightarrow\mathbb{R}/\mathbb{Z}$ it does
not change dimension, so the $d$-th convolution of $\mu$ in $\mathbb{R}/\mathbb{Z}$
has dimension $1$. Since this convolved measure is also $T_{b}$-invariant
it is exact dimensional, and hence its exact dimension is 1, and it
must be Lebesgue measure because this is the only measure of dimension
1 invariant under $T_{b}$. Finally, by examining the Fourier coefficients
and using the elementary relation $\widehat{(\mu^{*d})}(k)=\widehat{\mu}(k)^{d}$,
we conclude that $\mu$ is Lebesgue measure. This is the desired contradiction.
\end{proof}

\subsection{\label{sub:The-phase}The phase }

Let $\mu$ be a $T_{b}$-invariant measure of intermediate dimension
generating a.e. the same distribution $P$. As usual we denote by
$P_{x}$ the distribution generated by $\mu$ at $x$ and by $P_{x,y}$
the distribution generated by $\mu\times\mu$ at $(x,y)$, and by
$\widetilde{P}_{x}$ and $\widetilde{P}_{x,y}$ distributions obtained
from $\widetilde{\mu}$ and $\widetilde{\mu}\times\widetilde{\mu}$
as in Proposition \ref{pro:prediction-measures-and-their-properties}.

Throughout this section $\alpha\in\Sigma_{P}$ and $\varphi$ is the
corresponding eigenvalue. Recall that the phase $p_{\alpha}(P_{x,y})$
is the almost sure value of $\varphi(\sigma)/\varphi(y)$ for $\sigma\times\tau\sim P_{x,y}$,
which is the same as for $\sigma\times\tau\sim\widetilde{P}_{x,y}$.
Therefore for $\mu\times\mu$-typical $(x,y)$ and corresponding typical
$(\omega,\eta)\in\Omega\times\Omega$,\[
p_{\alpha}(P_{x,y})=\frac{\varphi(\mu_{\omega}^{*})}{\varphi(\mu_{\eta}^{*})}\]
Fixing a $\mu$-typical $x_{0}$, the phase measure $\theta_{\alpha}=\theta_{\alpha}(\mu,x_{0})$
is the push-forward of $\mu$ via $y\mapsto p_{\alpha}(P_{x_{0},y})$.
Thus, for $\omega_{0}$ corresponding to $x_{0}$, we find that $\theta_{\alpha}(\mu,x_{0})$
is the push-forward of $\widetilde{\mu}$ through the map $\eta\mapsto\varphi(\mu_{\omega_{0}}^{*})/\varphi(\mu_{\eta}^{*})$. 
\begin{prop}
\label{pro:equidistribution-of-other-phases}If $\lambda\in\Sigma_{P}\setminus\frac{1}{\log b}\mathbb{Q}$
then $\theta_{\lambda}$ is Lebesgue measure.\end{prop}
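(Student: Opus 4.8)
The plan is to show that the phase measure $\theta_\lambda$ is rotation-invariant, whence it must be Lebesgue. The key observation is that $\theta_\lambda(\mu,x_0)$ is, up to the rotation by $\varphi(\mu_{\omega_0}^*)$, simply the distribution of the random variable $\eta \mapsto 1/\varphi(\mu_\eta^*)$ under $\widetilde\mu$; more symmetrically, consider the two-point phase distribution, i.e. the push-forward of $\widetilde\mu\times\widetilde\mu$ under $(\omega,\eta)\mapsto \varphi(\mu_\omega^*)/\varphi(\mu_\eta^*)$. Call this measure $\Theta_\lambda$ on the circle. Since $x_0$ is $\mu$-typical, $\theta_\lambda$ is a.e. fiber of $\Theta_\lambda$, so it is enough to prove that $\Theta_\lambda$ is Lebesgue, or equivalently that $\Theta_\lambda$ is invariant under every rotation. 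By Fubini it suffices to show: for $\widetilde\mu$-a.e. $\omega$, the push-forward of $\widetilde\mu$ under $\eta\mapsto \varphi(\mu_\omega^*)/\varphi(\mu_\eta^*)$ is Lebesgue; equivalently, the distribution of $\eta\mapsto \varphi(\mu_\eta^*)$ is invariant under rotation by $\varphi(\mu_\omega^*)$ for a.e. $\omega$.

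The mechanism producing rotation-invariance should be the following. Because $\lambda\in\Sigma_P$, Lemma \ref{lem:ergodicity-vs-spectrum} tells us that $S^*_{\log b}$ is not ergodic on $P$ and its ergodic component $\widetilde P$ is not invariant under $S^*_{\log b/n}$ for any $n$; equivalently the orbit of $\widetilde P$ under $(S^*_t)_{0\le t\le\log b}$ is a circle worth of distinct ergodic components, parametrized by the phase. The value $\varphi(\mu_\eta^*)$ records which ergodic component of $P$ (with respect to $S^*_{\log b}$) the measure $\mu_\eta^*$ lies in. Now the crucial point is that $\lambda$ is \emph{not} a rational multiple of $1/\log b$: the flow $\widetilde P$ is $S^*_{\log b}$-invariant, and $e(\lambda\cdot\log b)$ is not a root of unity, so the eigenvalue $\lambda$ of the suspension flow $P$ restricts on $\widetilde P$ to an eigenvalue $\lambda\log b$ of the \emph{automorphism} $S^*_{\log b}$, but — and here is where irrationality enters — the corresponding eigenfunction of $S^*_{\log b}$ on $\widetilde P$ can be transported back to $\Omega$: via the factor map $\pi^*:(\Omega,\widetilde\mu,T)\to(\mathcal M^*,\widetilde P,S^*_{\log b})$, $\varphi\circ\pi^*$ is an eigenfunction of $T$ with eigenvalue $\lambda\log b \notin \mathbb{Q}$. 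I would then invoke: the eigenvalue group of an ergodic system is a subgroup of $\mathbb{R}/\mathbb{Z}$, and the distribution of an eigenfunction of modulus one whose eigenvalue $\beta$ is irrational is rotation-invariant on the circle — indeed $\psi$ and $e(\beta)\psi$ have the same distribution (apply $T$), and since $\{e(n\beta)\}$ is dense in the circle the distribution of $\psi$ is invariant under a dense subgroup of rotations, hence (being a closed condition on the distribution) invariant under all rotations, hence Lebesgue.

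Concretely the steps are: (1) reduce, via Fubini over $x_0$ and the identification of the phase with a ratio of eigenfunction values, to showing that the $\widetilde\mu$-distribution of $\eta\mapsto\varphi(\mu_\eta^*)$ is rotation-invariant; (2) observe $\psi:=\varphi\circ\pi^*\in L^2(\widetilde\mu)$ satisfies $\psi\circ T = e(\lambda\log b)\,\psi$ $\widetilde\mu$-a.e. (using Proposition \ref{pro:prediction-measures-and-their-properties}\eqref{enu:scenery-flow-comes-from-prediction-meas} that $\pi^*$ intertwines $T$ and $S^*_{\log b}$, and that $\varphi$ is an eigenfunction of $S^*$ with eigenvalue $\lambda$, hence of $S^*_{\log b}$ with eigenvalue $\lambda\log b$), and $|\psi|=1$ a.e.; (3) since $\lambda\notin\frac1{\log b}\mathbb{Q}$, $\beta:=\lambda\log b$ is irrational mod $1$, so $\{e(n\beta):n\in\mathbb{Z}\}$ is dense in $\mathbb{R}/\mathbb{Z}$; by $T$-invariance of $\widetilde\mu$ the law of $\psi$ equals the law of $e(\beta)\psi$, hence equals the law of $e(n\beta)\psi$ for all $n$, and by density and weak-$*$ continuity of the map "distribution $\mapsto$ its rotates" the law of $\psi$ is invariant under all rotations of the circle, so it is normalized Lebesgue measure on $\{|z|=1\}$; (4) unwind: for a.e. $x_0$ (equivalently a.e. $\omega_0$), $\theta_\lambda(\mu,x_0)$ is the law of $\varphi(\mu_{\omega_0}^*)\cdot\overline{\psi(\eta)}$ (a fixed rotation of the law of $\overline\psi$), which is again Lebesgue.

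The main obstacle is step (2): making rigorous the claim that the eigenfunction $\varphi$ of the flow $(\mathcal M^*,P,S^*)$, which a priori lives on the suspension, genuinely pulls back through the factor map $\pi^*$ to a $T$-eigenfunction on $(\Omega,\widetilde\mu)$ with the \emph{same} (non-normalized) eigenvalue $\lambda\log b$. One has to be careful that $\widetilde P$ is an $S^*_{\log b}$-invariant distribution but only an ergodic \emph{component} of $P$, not $P$ itself, and that $S^*$ acts only measurably (discontinuously) — so I would use Lemma \ref{lem:canonical-eigenfunctions} to replace $\varphi$ by an honest eigenfunction defined along every orbit, then restrict to $\widetilde P$ and pull back. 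A secondary point requiring a line of justification is that "distribution has a dense group of rotational symmetries $\Rightarrow$ rotation-invariant": this follows because for fixed test function $g\in C(\mathbb{R}/\mathbb{Z})$ the map $s\mapsto \int g(z+s)\,d(\mathrm{law}\,\psi)(z)$ is continuous and constant on a dense set.
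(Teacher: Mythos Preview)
Your proof is correct and follows essentially the same approach as the paper: pull back $\varphi$ through $\pi^*$ to obtain an eigenfunction $\psi=\varphi\circ\pi^*$ of $(\Omega,\widetilde\mu,T)$ with eigenvalue $\lambda\log b$, observe that this eigenvalue is irrational, and conclude that the distribution of $\psi$ (hence of any fixed rotation $\varphi(\mu_{\omega_0}^*)/\psi$) is uniform on the circle. The paper compresses all of this into two sentences; your steps (2)--(3), including the care about restricting $\varphi$ from $P$ to the ergodic component $\widetilde P$ via Lemma~\ref{lem:canonical-eigenfunctions}, simply make explicit what the paper leaves implicit.
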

\begin{proof}
$\eta\mapsto\varphi(\mu_{\eta}^{*})$ is an eigenfunction for the
system $(\Omega,\widetilde{\mu},T)$ with eigenvalue $\lambda\log b$.
Since $\lambda\log b$ is irrational the distribution of $\varphi(\mu_{\eta}^{*})$
is uniform on the circle, so the same is true for $\varphi(\mu_{\omega_{0}}^{*})/\varphi(\mu_{\eta}^{*})$,
and the conclusion follows.\end{proof}
\begin{prop}
\label{pro:description-of-the-phase}If $\alpha=\frac{n}{m\log b}\in\Sigma_{(\mathcal{M}^{*},P,S^{*})}$
and $\mu$ is ergodic then $\theta_{\alpha}$ is uniform measure on
a rotation of the $m$-th roots of unity. In particular if $m=1$
then $\theta_{\alpha}$ consists of a single atom.\end{prop}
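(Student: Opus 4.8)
\textbf{Proof proposal for Proposition \ref{pro:description-of-the-phase}.}

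The plan is to exploit the structure of $\widetilde{P}$ as an ergodic component of $P$ with respect to $S_{\log b}^{*}$, combined with the hypothesis that $\alpha = n/(m\log b)$ is an eigenvalue of the full flow $S^{*}$. First I would recall from Lemma \ref{lem:ergodicity-vs-spectrum} and Lemma \ref{lem:invariance-of-ergodic-components} that, since $\alpha\in\Sigma_{P}$, the flow $(\mathcal{M}^{*},P,S^{*})$ is a $t_{0}$-suspension of its ergodic component under $S_{t_0}^{*}$ where $1/t_0 = \alpha$; equivalently, writing $\psi$ for the eigenfunction with $\psi\circ S_t^{*} = e(\alpha t)\psi$, the ergodic components of $S_{\log b}^{*}$ are indexed by the value of $\psi$, and $\widetilde{P}$ sits on a level set $\psi^{-1}(z_0)$. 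The key observation is that since $\widetilde{P}$ is $S_{\log b}^{*}$-invariant, $e(\alpha\log b)\psi = \psi$ holds $\widetilde{P}$-a.e., so $e(\alpha\log b) = e(n/m)$ must act trivially on the level-set structure — but more precisely, the orbit of $\widetilde{P}$ under the full suspension parameter hits exactly $m$ distinct ergodic components before returning (since $\alpha\log b = n/m$ and $\gcd$-reduced), i.e. $S_{k\log b/m}^{*}$ for $k=0,\dots,m-1$ cycle through the level sets $\psi^{-1}(z_0 e(kn/m))$.

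Next, the eigenfunction $\varphi$ for $\alpha\in\Sigma_P$ restricted to (the extended version of) $\widetilde{P}$ becomes, via $\pi^*:(\Omega,\widetilde\mu,T)\to(\mathcal{M}^*,\widetilde P,S_{\log b}^*)$, a function $\eta\mapsto\varphi(\mu_\eta^*)$ satisfying $\varphi(\mu_{T\eta}^*) = \varphi(S_{\log b}^*\mu_\eta^*) = e(\alpha\log b)\varphi(\mu_\eta^*) = e(n/m)\varphi(\mu_\eta^*)$. So $\eta\mapsto\varphi(\mu_\eta^*)$ is an eigenfunction of the \emph{ergodic} system $(\Omega,\widetilde\mu,T)$ with eigenvalue $n/m \bmod 1$, which is rational of exact denominator $m$ (after reducing $n/m$; if the reduced denominator were smaller the argument only gets easier, but I would check it is exactly $m$ using that $\alpha$ is not an eigenvalue of any $S_{\log b/m'}^*$ for $m'<m$, which follows from $\alpha$ being a genuine eigenvalue of $S^*$ and the reduction of $n/m$). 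An eigenfunction of an ergodic transformation for a rational eigenvalue $p/q$ (in lowest terms) takes values in a single coset of the group of $q$-th roots of unity, and its distribution under the ergodic measure is the uniform measure on those $q$ values — this is the standard fact that the eigenvalue group of an ergodic system is realized by eigenfunctions whose level sets partition the space into $q$ sets permuted cyclically with equal measure.

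I would then conclude: the push-forward of $\widetilde\mu$ under $\eta\mapsto\varphi(\mu_\eta^*)$ is uniform on a coset of the $m$-th roots of unity, hence so is the push-forward under $\eta\mapsto\varphi(\mu_{\omega_0}^*)/\varphi(\mu_\eta^*)$ (dividing by the fixed nonzero constant $\varphi(\mu_{\omega_0}^*)$ just rotates this uniform measure). Since $\theta_\alpha(\mu,x_0)$ is by definition exactly this push-forward (as recalled in the paragraph preceding the proposition), we get $\theta_\alpha$ = uniform measure on a rotation of the $m$-th roots of unity, and when $m=1$ this is a single atom. The main obstacle I anticipate is pinning down that the denominator appearing is exactly $m$ rather than a proper divisor: one must make sure the representation $\alpha = n/(m\log b)$ is taken with $n/m$ in lowest terms (or argue the statement is insensitive to this), and that the eigenvalue $\alpha$ of $S^*$ does not secretly factor through a smaller-period suspension — this is where Lemma \ref{lem:invariance-of-ergodic-components}, giving the largest $n$ for which $\widetilde P$ is $S_{\log b/n}^*$-invariant, does the bookkeeping. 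Everything else is a routine chain of push-forwards and the classical description of eigenfunctions for rational eigenvalues of ergodic maps.
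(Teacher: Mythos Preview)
Your proposal is correct and follows essentially the same approach as the paper: the core argument is that $\eta\mapsto\varphi(\mu_\eta^*)$ is an eigenfunction of the ergodic system $(\Omega,\widetilde\mu,T)$ with rational eigenvalue $\alpha\log b=n/m$, hence its distribution is uniform on a coset of the $m$-th roots of unity, and dividing by the constant $\varphi(\mu_{\omega_0}^*)$ merely rotates this. Your first paragraph on the suspension structure is extra scaffolding the paper omits entirely, and your concern about the exact denominator is legitimate but likewise unaddressed in the paper, which implicitly takes $n/m$ in lowest terms.
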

\begin{proof}
$\eta\mapsto\varphi(\mu_{\eta}^{*})$ is an eigenfunction of $(\Omega,\widetilde{\mu},T)$
with eigenvalue $(\log b)\alpha=n/m$. The distribution of $\varphi(\mu_{\eta}^{*})$
for $\eta\sim\widetilde{\mu}$ it just the distribution of this eigenfunction,
which is uniform on a rotation of the $m$-th roots of unity. Therefore
the same is true for $\varphi(\mu_{\omega_{0}}^{*})/\varphi(\mu_{\eta}^{*})$,
$\eta\sim\widetilde{\mu}$, proving the proposition.
\end{proof}
Now we turn to the non-ergodic case. For $z=e(t)$ with $t\in[0,1)$
let us denote $L(z)=t/\alpha$.
\begin{lem}
The distribution of $\mu_{\omega}^{*}$, $\omega\sim\widetilde{\mu}$
is the same as the distribution of $S_{L(z)}^{*}\nu$, $(\nu,z)\sim\widetilde{P}_{x_{0}}\times\theta_{\alpha}$,
where $\theta_{\alpha}=\theta_{\alpha}(\mu,x_{0})$. \end{lem}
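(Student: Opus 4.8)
The plan is to realize both distributions in the statement as the first marginals of one coupling on $\mathcal{M}^{*}\times S^{1}$ built from $\widetilde{\mu}$, by recognizing each ergodic component of $\widetilde{\mu}$ as an explicit $S^{*}$-translate of $\widetilde{P}_{x_{0}}$ whose translation amount is read off from the phase. First I would decompose $\widetilde{\mu}$ into its $T$-ergodic components $\widetilde{\mu}^{(x)}$, $x\sim\mu$. By Proposition \ref{pro:prediction-measures-and-their-properties} the push-forward of $\widetilde{\mu}^{(x)}$ under $\pi^{*}\colon\omega\mapsto\mu_{\omega}^{*}$ is $\widetilde{P}_{x}$, so the distribution of $\mu_{\omega}^{*}$, $\omega\sim\widetilde{\mu}$, is $\int\widetilde{P}_{x}\,d\mu(x)$. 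Since $\mu$ generates $P$ at a.e.\ point (Proposition \ref{pro:convergence-of sceneries}), each $\widetilde{P}_{x}$ is an ergodic component of $P$ for $S_{\log b}^{*}$, so by Lemma \ref{lem:canonical-ergodic-components} there is a measurable choice of $u(x)$ with $\widetilde{P}_{x}=S_{u(x)}^{*}\widetilde{P}_{x_{0}}$, well defined modulo the period of the $S^{*}$-orbit of $\widetilde{P}_{x_{0}}$.

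The key step, which I expect to be the main obstacle, is to identify this translate with the phase: $\widetilde{P}_{x}=S_{L(p_{\alpha}(P_{x_{0},x}))}^{*}\widetilde{P}_{x_{0}}$. Using the canonical eigenfunction $\overline{\varphi}$ from Lemma \ref{lem:canonical-eigenfunctions}, on $\widetilde{P}_{x}=S_{u(x)}^{*}\widetilde{P}_{x_{0}}$ the value of $\overline{\varphi}$ is $e(\alpha u(x))$ times its value on $\widetilde{P}_{x_{0}}$, since $\overline{\varphi}\circ S_{t}^{*}=e(\alpha t)\overline{\varphi}$; but by construction $p_{\alpha}(P_{x_{0},x})$ is exactly this relative value, being the a.s.\ value of $\overline{\varphi}(\mu_{\omega_{0}}^{*})/\overline{\varphi}(\mu_{\eta}^{*})$ along the joining $\widetilde{P}_{x_{0},x}$. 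Hence $e(\alpha u(x))$ is determined by $p_{\alpha}(P_{x_{0},x})$; since $L$ inverts $s\mapsto e(\alpha s)$ on $[0,1/\alpha)$, and since --- by Lemmas \ref{lem:invariance-of-ergodic-components} and \ref{lem:ergodicity-vs-spectrum} applied to $\alpha\in\Sigma_{P}$ --- the eigenfunction $\overline{\varphi}$ separates the $S_{\log b}^{*}$-ergodic components of $P$, this pins down $u(x)$ modulo the period and gives the identity. The delicacy here is essentially bookkeeping: one must keep track of the direction of the translate and of the normalization of $\overline{\varphi}$ so that the identification is exact rather than merely up to conjugation by $z\mapsto\bar z$.

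To finish, let $\Xi$ be the distribution of the pair $\bigl(\mu_{\omega}^{*},\,p_{\alpha}(P_{x_{0},\xi(\omega)})\bigr)$ on $\mathcal{M}^{*}\times S^{1}$ with $\omega\sim\widetilde{\mu}$; its first marginal is the left-hand distribution in the lemma and, by Definition \ref{def:phase-measure}, its second marginal is $\theta_{\alpha}$. Conditioning $\Xi$ on the ergodic component $\widetilde{\mu}^{(x)}$, the second coordinate is a.s.\ the constant $z(x):=p_{\alpha}(P_{x_{0},x})$ (it depends only on the component) and the first is distributed $\widetilde{P}_{x}=S_{L(z(x))}^{*}\widetilde{P}_{x_{0}}$ by the key step, so $\Xi=\int\bigl(S_{L(z(x))}^{*}\widetilde{P}_{x_{0}}\bigr)\times\delta_{z(x)}\,d\mu(x)$. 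Since $\theta_{\alpha}$ is the law of $z(x)$ under $x\sim\mu$, this is precisely the law of $\bigl(S_{L(z)}^{*}\nu,\,z\bigr)$ when $z\sim\theta_{\alpha}$ and $\nu\sim\widetilde{P}_{x_{0}}$ is drawn independently. Comparing first marginals yields the lemma.
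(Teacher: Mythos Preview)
Your proposal is correct and follows essentially the same route as the paper: both construct the coupling $\omega\mapsto(\mu_\omega^{*},p(\omega))$ (the paper composes with $S_{-L(z)}^{*}$ on the first coordinate), identify the second marginal with $\theta_\alpha$ via the ergodic decomposition, and reduce to the identity $\widetilde P_x=S_{L(z(x))}^{*}\widetilde P_{x_0}$ obtained from the eigenfunction. Your treatment of this last identity is in fact more explicit than the paper's, which at that point only cites an unlabeled ``Corollary''.
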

\begin{proof}
Write for brevity\[
p(\omega)=\frac{\varphi(\mu_{\omega_{0}}^{*})}{\varphi(\mu_{\omega}^{*})}\]
and consider the map $\Omega\rightarrow\mathcal{M}^{*}\times\{|z|=1\}$
defined by \[
\omega\mapsto(S_{-L(p(\omega))}^{*}\mu_{\omega}^{*}\;,\; p(\omega))\]
It suffices to show that this map takes $\widetilde{\mu}$ to $\widetilde{P}\times\theta_{\alpha}$,
and for this we must show that (a) the second component of the image
measure is $\theta_{\alpha}$ and (b) conditioned on the value of
the second component, the distribution of the first component is $\widetilde{P}_{x_{0}}$.

For (a) , for $x\sim\mu$. Then for $\widetilde{\mu}^{(x)}$-a.e.
$\omega$ the value of $p(\omega)=\varphi(\mu_{\omega_{0}}^{*})/\varphi(\mu_{\omega}^{*})$
is $p_{\alpha}(\widetilde{P}_{x_{0},x})$, because, by definition,
$\mu_{\omega_{0}}^{*}\times\mu_{\omega}^{*}$ is a typical element
of $\widetilde{P}_{x,x_{0}}$; and this is the same as $p_{\alpha}(P_{x_{0},x})$,
because $P_{x_{0},x}=\int_{0}^{1}S_{t\log b}^{*}\widetilde{P}_{x_{0},x}\, dt$.
The distribution of $p_{\alpha}(P_{x_{0},x})$ this quantity for $x\sim\mu$
is by definition equal to $\theta_{\alpha}$. 

Next, conditioned on the value $p(\omega)=\varphi(\mu_{\omega_{0}}^{*})/\varphi(\mu_{\omega}^{*})$
we know by Corollary that $S_{-L(z)}^{*}\widetilde{P}_{x}=\widetilde{P}_{x_{0}}$.
This proves the lemma.\end{proof}
\begin{prop}
\label{pro:concentration-of-log-b-phase}If $\alpha\in\Sigma_{P}\cap\frac{1}{\log b}\mathbb{Q}$
then $\theta_{\alpha}$ is singular with respect to Lebesgue.\end{prop}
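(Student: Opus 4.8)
The plan is to argue by contradiction, in parallel with the proof of Theorem~\ref{thm:existence-of-spectrum}: if $\theta_\alpha$ has a non-trivial absolutely continuous part, I will extract a $T_b$-invariant sub-measure of $\mu$ which the dimension argument of that theorem forces to be Lebesgue, contradicting the intermediate entropy assumption. The role played there by the hypothesis that $\widetilde P$ is invariant under a full range of scales will here be played by absolute continuity of a suitable push-forward of $\theta_\alpha$; by the lemma preceding this proposition, such absolute continuity says precisely that the scenery of $\mu$ fills out a positive-length range of scales absolutely continuously.

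First I would reduce to a clean situation. Write $\alpha\log b=n/m$ in lowest terms. Since $\varphi(\mu_{T\omega}^{*})=e(\alpha\log b)\varphi(\mu_\omega^{*})$ and $e(n)=1$, the function $\omega\mapsto\varphi(\mu_\omega^{*})^{m}$ is $T$-invariant, hence factors through $\xi_0$ to a $T_b$-invariant function $\beta\colon[0,1]\to S^1$ (defined $\mu$-a.e.). Unwinding the proof of Proposition~\ref{pro:description-of-the-phase} in the non-ergodic case gives $\theta_\alpha(\mu)=\sigma*u_m$ up to rotation and conjugation, where $u_m$ is normalized counting measure on the $m$th roots of unity and $\sigma$ is a measurable $m$th-root push-forward of $\beta_*\mu$; since convolution with the finitely supported $u_m$, rotation, conjugation, and the maps $z\mapsto z^{m}$ all respect the decomposition of a measure into absolutely continuous and singular parts, $\theta_\alpha(\mu)$ is non-singular if and only if $\beta_*\mu$ is. Let $h\colon[0,1]\to[0,\log b]$ be the $T_b$-invariant entropy-of-ergodic-component function, let $N$ be a Lebesgue-null Borel set carrying the singular part of $\beta_*\mu$, and choose $\delta>0$ small enough that $A=\beta^{-1}(S^1\setminus N)\cap\{h\ge\delta\}$ has positive $\mu$-measure (possible since $\beta_*\mu$ is assumed non-singular and $h>0$ a.e.). Set $\mu'=\mu|_A$. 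Then $\mu'$ is $T_b$-invariant, all its ergodic components have entropy in $[\delta,\log b)$, $\mu'\ll\mu$ so $\mu'$ generates $P$ almost everywhere and Definition~\ref{def:phase-measure} applies to it, and $\beta_*\mu'$, being dominated by the absolutely continuous measure $(\beta_*\mu)|_{S^1\setminus N}$, is purely absolutely continuous; running the equivalence above in reverse, $\theta_\alpha(\mu')$ is purely absolutely continuous.

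Next I would reprise Theorem~\ref{thm:existence-of-spectrum} with $\mu'$ in place of $\mu$. By the preceding lemma the distribution of $(\mu'_\omega)^{*}$ for $\omega\sim\widetilde{\mu'}$ is that of $S^{*}_{s}\nu$ with $(\nu,s)\sim\widetilde P\times L_{*}\theta_\alpha(\mu')$, and $L_{*}\theta_\alpha(\mu')$ is absolutely continuous of positive mass. Combining this with $\mu'=\int(U_{-\xi(\omega)}\mu'_\omega)|_{[0,1]}\,d\widetilde{\mu'}(\omega)$ and absorbing normalizing constants and translations into a function valued in $[0,1]$, exactly as in Theorem~\ref{thm:existence-of-spectrum}, yields a representation
\[
\mu'=\int_\Omega\int (U_{-\xi(\omega,s)}S_{s}\mu'_\omega)\big|_{[0,1]}\,d\rho(s)\,d\widetilde{\mu'}(\omega),
\]
with $\xi(\cdot,\cdot)\in[0,1]$ and $\rho$ absolutely continuous of positive mass. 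Fix $d$ with $d\delta/\log b>1$, so that $\dim\bigl(\times_{i=1}^{d}\mu'_{\omega^{i}}\bigr)=\sum_{i=1}^{d}\dim\mu'_{\omega^{i}}\ge d\delta/\log b>1$ for \emph{every} $\omega$, pass to $\times_{i=1}^{d}\mu'$ and the map $f(x)=\sum x_i$, and follow the same sequence of absolute-continuity reductions as in Theorem~\ref{thm:existence-of-spectrum}. This bounds $\ldim(\mu')^{*d}$ below by the essential infimum, over $\omega\sim\widetilde{\mu'}^{\times d}$ and $(s_1,\dots,s_d)\sim\rho^{\times d}$, of $\ldim$ of the image of $\times_{i}\mu'_{\omega^{i}}$ under $x\mapsto\sum e^{s_i}x_i$; since $\rho^{\times d}$ is absolutely continuous and $\times_{i}\mu'_{\omega^{i}}$ is exact dimensional of dimension exceeding $1$, the Hunt--Kaloshin projection theorem makes this essential infimum equal to $1$. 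Hence $(\mu')^{*d}$ has dimension $1$, so, as at the end of Theorem~\ref{thm:existence-of-spectrum}, its image in $\mathbb{R}/\mathbb{Z}$ is a $T_b$-invariant, hence exact dimensional, measure of dimension $1$, so Lebesgue; comparing Fourier coefficients through $\widehat{(\mu')^{*d}}(k)=\widehat{\mu'}(k)^{d}$ forces $\mu'$ to be Lebesgue, contradicting the fact that its ergodic components have entropy $<\log b$.

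The step I expect to be the main obstacle is the reduction in the second paragraph: one has to produce a sub-measure whose phase measure is genuinely, not merely partly, absolutely continuous — a sub-measure has only \emph{larger} lower dimension, so in the final step all of $(\mu')^{*d}$ must be reached — while simultaneously keeping the entropy of the ergodic components bounded away from $0$ (so that a single $d$ suffices) and strictly below $\log b$ (so that the conclusion ``Lebesgue'' is a contradiction), and retaining enough structure for the Theorem~\ref{thm:existence-of-spectrum} machinery and the Hunt--Kaloshin bound to apply. The remaining bookkeeping — in particular the passage from the lemma to the displayed integral representation, with its normalizing constants and translations — is routine but should be carried out with the same care as in Theorem~\ref{thm:existence-of-spectrum}.
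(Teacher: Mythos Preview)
Your approach is essentially the paper's: the preceding lemma identifies the distribution of $\mu_\omega^{*}$, and one then bounds $\ldim\mu^{*d}$ below by an essential infimum to which the Hunt--Kaloshin projection theorem applies, forcing $\mu$ to be Lebesgue. The paper argues directly with $\mu$, assuming for contradiction that $\theta_\alpha$ is absolutely continuous (rather than merely non-singular) and choosing a single $d$ with $d\cdot\dim\mu>1$; your preliminary passage to a $T_b$-invariant sub-measure $\mu'$ with purely absolutely continuous phase and entropy uniformly bounded away from $0$ supplies a more careful justification of those two points, but otherwise the arguments coincide.
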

\begin{proof}
Our strategy is similar to the proof of Theorem \ref{thm:existence-of-spectrum}.
Choose an integer $d$ with $d\cdot\dim\mu>1$; we show that if $\theta_{\alpha}$
were continuous with respect to Lebesgue measure then this would imply
that $\dim\mu^{*d}=1$, which would contradict the to intermediate
entropy of $\mu$, as in Theorem \ref{thm:existence-of-spectrum}. 

We aim to show that $\dim\mu^{*d}=1$. By Proposition \ref{pro:prediction-measures-and-their-properties}
we have\[
\mu=\int(U_{\xi(\omega)}\mu_{\omega})|_{[0,1]}\, d\widetilde{\mu}(\omega)\]
Write $\omega=(\omega^{1},\ldots,\omega^{d})$ and $\widetilde{\mu}^{d}=\times_{i=1}^{d}\widetilde{\mu}$.
Let $f(x)=\sum_{i=1}^{d}x_{i}$. Then \begin{eqnarray*}
\ldim\mu^{*d} & = & \ldim f(\int\times_{i=1}^{d}(U_{\xi(\omega^{i})}\mu_{\omega^{i}})|_{[0,1]}\, d\widetilde{\mu}^{d}(\omega))\\
 & \geq & \essinf_{\omega\sim\widetilde{\mu}^{d}}\ldim f((\times_{i=1}^{d}U_{\xi(\omega^{i})}\mu_{\omega^{i}})|_{[0,1]^{d}})\\
 & \geq & \essinf_{\omega\sim\widetilde{\mu}^{d}}\ldim f((\times_{i=1}^{d}\mu_{\omega^{i}})|_{[-1,1]^{d}})\end{eqnarray*}
where in the last equality we used linearity of $f$. Writing $\nu=(\nu_{1},\ldots,\nu_{d})$,
$z=(z_{1},\ldots,z_{d})$ and $\widetilde{P}^{d}$, $\theta_{\alpha}^{d}$
for the $d$-fold product measures, we can apply the previous lemma
and get\[
=\essinf_{\nu\sim\widetilde{P}_{x_{0}}\,,\, t\sim\theta_{\alpha}^{d}}\ldim f\left((\times_{i=1}^{d}S_{L(z^{i})}^{*}\nu_{i})|_{[-1,1]^{d}}\right)\]
Setting $f_{t}(x)=\sum_{i=1}^{d}e^{t_{i}}x_{i}$ and $L(z)=(L(z_{1}),\ldots,L(z_{d}))$,\[
\geq\essinf_{\nu\sim\widetilde{P}_{x_{0}}\,,\, t\sim\theta_{\alpha}^{d}}\ldim f_{-L(z)}\left((\times_{i=1}^{d}\nu^{i})|_{[-b,b]^{d}}\right)\]
Now with $\nu_{1},\ldots,\nu_{d}$ fixed typical measures for $\widetilde{P}_{x_{0}}$
we know that $\times_{i=1}^{d}\nu_{i}|_{[-b,b]^{d}}$ has exact dimension
$d\dim\mu>1$. Also, since $L$ is a piecewise smooth map, if $\theta_{\alpha}$
were absolutely continuous then the distribution of $L(z)$ for $z\sim\theta_{\alpha}^{d}$
would be absolutely continuous with respect to $d$-dimensional Lebesgue
measure. Hence, applying Marstrand's theorem again, we find that for
$\widetilde{P}_{x_{0}}^{d}$-a.e. choice of $\nu$ the dimension in
the expression above is $1$, so the essential infimum is $1$. This
completes the proof.
\end{proof}
\bibliographystyle{plain}
\bibliography{bib}

\end{document}